\title{On existence of canonical ${G}$-bases}
\author[D. HOFFMANN]{Daniel Max Hoffmann$^{\dagger}$}
\thanks{2010 \textit{Mathematics Subject Classification}. Primary 13N15; Secondary 14L15, 20G15}
\thanks{\textit{Key words and phrases}. Hasse-Schmidt derivations, separably closed fields, algebraic groups, group scheme actions.}
\thanks{$^{\dagger}$SDG. Part of this work was conducted during author's internship at the Warsaw Center of Mathematics and Computer Science. Partially supported by NCN grant 2016/20/T/ST1/00482 and NCN grant 2015/19/B/ST1/01150}
\address{$^{\dagger}$Instytut Matematyczny\\
Uniwersytet Wroc{\l}awski\\
Wroc{\l}aw\\
Poland}
\email{daniel.hoffmann@math.uni.wroc.pl}
\DeclareMathOperator{\Der}{Der}
  \DeclareMathOperator{\id}{id}
 \DeclareMathOperator{\fr}{Fr}
\DeclareMathOperator{\im}{im}  
 \DeclareMathOperator{\spe}{Spec}
\DeclareMathOperator{\ev}{ev}
\DeclareMathOperator{\ddf}{DF}\DeclareMathOperator{\dcf}{DCF}
\newtheorem{theorem}{Theorem}[section]
\newtheorem{prop}[theorem]{Proposition}
\newtheorem{lemma}[theorem]{Lemma}
\newtheorem{cor}[theorem]{Corollary}
\newtheorem{fact}[theorem]{Fact}
\theoremstyle{definition}
\newtheorem{definition}[theorem]{Definition}
\newtheorem{example}[theorem]{Example}
\newtheorem{remark}[theorem]{Remark}
\theoremstyle{remark}
\providecommand*{\cupdot}{%
  \mathbin{%
    \mathpalette\@cupdot{}%
  }%
}
\newcommand*{\@cupdot}[2]{%
  \ooalign{%
    $\m@th#1\cup$\cr
    \hidewidth$\m@th#1\cdot$\hidewidth
  }%
}
\begin{document}

\newcommand{\twoc}[3]{ {#1} \choose {{#2}|{#3}}}
\newcommand{\thrc}[4]{ {#1} \choose {{#2}|{#3}|{#4}}}
\newcommand{\Zz}{{\mathds{Z}}}
\newcommand{\Ff}{{\mathds{F}}}
\newcommand{\Cc}{{\mathds{C}}}
\newcommand{\Rr}{{\mathds{R}}}
\newcommand{\Nn}{{\mathds{N}}}
\newcommand{\Qq}{{\mathds{Q}}}
\newcommand{\Kk}{{\mathds{K}}}
\newcommand{\Pp}{{\mathds{P}}}
\newcommand{\ddd}{\mathrm{d}}
\newcommand{\Aa}{\mathds{A}}
\newcommand{\dlog}{\mathrm{ld}}
\newcommand{\ga}{\mathbb{G}_{\rm{a}}}
\newcommand{\gm}{\mathbb{G}_{\rm{m}}}
\newcommand{\gaf}{\widehat{\mathbb{G}}_{\rm{a}}}
\newcommand{\gmf}{\widehat{\mathbb{G}}_{\rm{m}}}
\newcommand{\gdf}{\mathfrak{g}-\ddf}
\newcommand{\gdcf}{\mathfrak{g}-\dcf}
\newcommand{\fdf}{F-\ddf}
\newcommand{\fdcf}{F-\dcf}

\maketitle
\begin{abstract}
We describe a general method for expanding a truncated $G$-iterative Hasse-Schmidt derivation, where $G$ is an algebraic group.
We give examples of algebraic groups for which our method works.
\end{abstract}

\tableofcontents

\section{Introduction}
\noindent
Our motivation for this paper is \cite{Zieg3} and \cite{K3}, where some nice model-theoretic properties are obtained for fields
equipped with HS-derivations satisfying the standard iterativity rule.
Analyzing the reasoning in \cite{Zieg3} and \cite{K3}, we deduce that one of the most important properties of an iterative Hasse-Schmidt derivation is 
Matsumura's \emph{strong integrability} (a notion from \cite{Mats1}, see: Definition \ref{defintegrable}). Thus we are especially interested in it.
\\
Briefly, strong integrability means that a truncated iterative HS-derivation can be expanded to a not-truncated one,
satisfying the same iterativity conditions. We prove (Theorem \ref{integrable}) that the existence of a \emph{canonical basis} (Definition
\ref{defbasis}) implies strong integrability for an arbitrary iterativity condition. However, the converse is not true in general
(see Remark \ref{remintg}),
which is related to the problem of the existence of canonical basis in a given field.
\\
Finding a canonical basis is not an easy task. Hideyuki Matsumura
in \cite{Mats1} proved the existence of canonical basis for $\mathbb{G}_a$ (the standard iterativity).
Afterwards Andrzej Tyc in \cite{Tyc}  did the same for $\mathbb{G}_m$ and one dimensional formal groups over
algebraically closed fields. Martin Ziegler showed existence of canonical bases for powers of $\mathbb{G}_a$ proving
the quantifier elimination for the theory of separably closed fields in \cite{Zieg3} and \cite{Zieg2} (see Example \ref{ZieglerEx}).
Before this paper only products of $\mathbb{G}_a$ and $\mathbb{G}_m$ were considered. We cover the case of
commutative, connected, unipotent groups of dimension $2$ over an algebraically closed field. 
That leads us to Theorem \ref{thmfinal}, stating that, over an algebraically closed field, linear algebraic groups
that are connected and commutative have canonical basis if unipotent elements form a subgroup of dimension $\le 2$.
This theorem includes all the previous results (mentioned above).
\\
Piotr Kowalski and I in \cite{HK} are treating iterative HS-derivations in much more abstract way. Many proofs from \cite{HK} would
be obvious if canonical bases exist for the HS-derivations considered there (a similar sentence was noted at the end of Section 2. in \cite{K3}).
Moreover, Section 6. in \cite{HK} suggests possible generalisations for the notion of canonical basis.

I thank my advisor, Piotr Kowalski, for his invaluable contribution.
I am also grateful to the referee for a very useful report.

\section{Basic notions about $F$-derivations}\label{secsetup}
\subsection{HS-derivations}
\noindent
All the rings considered in this paper are commutative and with unity.
Fix a field $k$ of the characteristic $p>0$, $e\in\mathbb{N}_{>0}$ and $m\in\mathbb{N}_{>0}\cup\lbrace\infty\rbrace$. Let $R$ be any $k$-algebra.
In this subsection we recall some definitions and well-known facts about HS-derivations.

\begin{definition}\label{HSdef1}
 We say that $\mathbb{D}=(D_{\mathbf{i}}:R\to R)_{\mathbf{i}\in\mathbb{N}^e}$ is an \emph{$e$-dimensional HS-derivation over $k$} if
  the map
  $$\mathbb{D}:R \to R\llbracket \bar{X}\rrbracket,\qquad r\mapsto \sum\limits_{\mathbf{i}\in\mathbb{N}^e} D_{\mathbf{i}}(r)\bar{X}^{\mathbf{i}},$$
  where $\bar{X}^{\mathbf{i}}=X_1^{i_1}\cdot\ldots\cdot X_e^{i_e}$ for $\mathbf{i}=(i_1,\ldots,i_e)$,
  is a $k$-algebra homomorphism and $D_{\mathbf{0}}=\id_R$.
\end{definition}
We introduce $R[\bar{v}]:=R[\bar{X}]/(X_1^{p^m},\ldots,X_e^{p^m})$, so $v_i=X_i+(X_1^{p^m},\ldots,X_e^{p^m})$ and $\bar{v}=(v_1,\ldots,v_e)$
(for $m=\infty$ we set $v_i=X_i$, $R[\bar{v}]=R\llbracket\bar{X}\rrbracket$).
After composing $\mathbb{D}$
with the natural mapping $R[\bar{X}]\to R[\bar{v}]$ we obtain a truncation of $\mathbb{D}$, 
denoted by $\mathbb{D}[m]=(D_{\mathbf{i}}:R\to R)_{\mathbf{i}\in [p^m]^e}$. This lead us to the following:

\begin{definition}
 A collection $\mathbb{D}=(D_{\mathbf{i}}:R\to R)_{\mathbf{i}\in [p^m]^e}$ is called an \emph{$m$-truncated $e$-dimensional HS-derivation over $k$} if
 the map 
 $$\mathbb{D}:R \to R[\bar{v}],\qquad r\mapsto \sum\limits_{\mathbf{i}\in [p^m]^e} D_{\mathbf{i}}(r)\bar{v}^{\mathbf{i}},$$
 where $\bar{v}^{\mathbf{i}}=v_1^{i_1}\cdot\ldots\cdot v_e^{i_e}$ for $\mathbf{i}=(i_1,\ldots,i_e)$,
 is a $k$-algebra homomorphism and $D_{\mathbf{0}}=\id_R$.
\end{definition}
Clearly, any $\infty$-truncated HS-derivation is just an HS-derivation.
We have seen that it is easy to obtain from an HS-derivation an $m$-truncated one. For a field $R=K$ the converse is also true.

\begin{theorem}\label{HSderiv}
 Let $R$ be a smooth $k$-algebra, $\mathbb{D}=(D_{\mathbf{i}}:R\to R)_{\mathbf{i}\in [p^m]^e}$ an $m$-truncated $e$-dimensional
 HS-derivation over $k$. There exists an $e$-dimensional HS-derivation $\mathbb{D}'=(D'_{\mathbf{i}}:R\to R)_{\mathbf{i}\in \mathbb{N}^e}$
 over $k$ such that for every $\mathbf{i}\in [p^m]^e$ we have $D_{\mathbf{i}}'=D_{\mathbf{i}}$.
\end{theorem}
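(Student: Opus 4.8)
The plan is to recast the statement as a lifting problem for $k$-algebra homomorphisms and to solve it by infinitesimal lifting along a tower of nilpotent thickenings, using the formal smoothness of $R$. First I would unwind the definitions. Writing $\pi_m\colon R\llbracket\bar X\rrbracket\to R[\bar v]$ for the natural projection with kernel $(X_1^{p^m},\dots,X_e^{p^m})$, an $m$-truncated HS-derivation is precisely a $k$-algebra homomorphism $\mathbb{D}\colon R\to R[\bar v]$ whose composition with the augmentation $\bar v\mapsto 0$ is $\id_R$, and an honest $e$-dimensional HS-derivation is a $k$-algebra homomorphism $R\to R\llbracket\bar X\rrbracket$ with the analogous property. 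Thus the task is exactly to lift $\mathbb{D}$ through $\pi_m$ to some $\mathbb{D}'\colon R\to R\llbracket\bar X\rrbracket$; the coefficients of any such lift automatically satisfy $D'_{\mathbf i}=D_{\mathbf i}$ for $\mathbf i\in[p^m]^e$, and $D'_{\mathbf 0}=\id_R$ follows since the augmentation of $\mathbb{D}'$ factors through that of $\mathbb{D}$.

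Next I would set up the tower that makes smoothness applicable. For $n\ge m$ put $R_n=R\llbracket\bar X\rrbracket/(X_1^{p^n},\dots,X_e^{p^n})$, so that $R_m=R[\bar v]$, with canonical surjections $\rho_n\colon R_{n+1}\to R_n$. Two observations drive the argument. Since the ideals $(X_1^{p^n},\dots,X_e^{p^n})$ are cofinal with the $(\bar X)$-adic filtration and $R\llbracket\bar X\rrbracket$ is $(\bar X)$-adically complete and separated, one has $R\llbracket\bar X\rrbracket=\varprojlim_n R_n$. Moreover each $\ker\rho_n$ lies in the augmentation ideal $(\bar X)R_{n+1}$, which is nilpotent because every monomial of sufficiently large total degree vanishes in $R_{n+1}$; hence each $\rho_n$ is a surjection with nilpotent kernel.

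Now I would run the induction. Starting from $\mathbb{D}_m=\mathbb{D}$ and using that $R$ smooth over $k$ implies $R$ is formally smooth (the infinitesimal lifting criterion for nilpotent ideals), I would lift $\mathbb{D}_n\colon R\to R_n$ successively to $\mathbb{D}_{n+1}\colon R\to R_{n+1}$ with $\rho_n\circ\mathbb{D}_{n+1}=\mathbb{D}_n$. By construction these maps form a compatible system, so $\mathbb{D}'=\varprojlim_n\mathbb{D}_n\colon R\to\varprojlim_n R_n=R\llbracket\bar X\rrbracket$ is a well-defined $k$-algebra homomorphism lifting $\mathbb{D}$ through $\pi_m$, and the verifications recorded in the first paragraph then finish the proof.

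The point worth stressing is why the tower is needed at all: one cannot lift through $\pi_m$ in a single step, since its kernel is \emph{not} nilpotent in $R\llbracket\bar X\rrbracket$ (for instance $X_1^{p^m}$ is not nilpotent there), so formal smoothness does not apply directly. Breaking the lift into the nilpotent steps $\rho_n$ and reconstituting $R\llbracket\bar X\rrbracket$ by completeness is exactly what circumvents this obstacle; the only genuinely non-formal ingredient is the implication that smoothness yields formal smoothness, which I would invoke as a standard fact.
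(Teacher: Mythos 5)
Your proposal is correct and follows essentially the same route as the paper: both reduce the statement to iterated infinitesimal lifting along the tower of surjections $R[\bar X]/(X_1^{p^{n+1}},\dots,X_e^{p^{n+1}})\to R[\bar X]/(X_1^{p^n},\dots,X_e^{p^n})$, whose kernels are nilpotent (the paper records the explicit bound $(X_1^{p^n},\dots,X_e^{p^n})^{e(p-1)+1}\subseteq(X_1^{p^{n+1}},\dots,X_e^{p^{n+1}})$), invoking smoothness of $R$ over $k$ at each step. Your only additions are making explicit the passage to the inverse limit and the verification that $D'_{\mathbf 0}=\id_R$, which the paper leaves implicit.
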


\begin{proof}
 We recursively construct $\mathbb{D}'$ as was done at \cite[p. 236]{Mats1}, but using
 the following diagram
\begin{equation*}
 \xymatrixcolsep{4.5pc}\xymatrixrowsep{1.5pc}\xymatrix{ 
  R \ar[r]^(.35){\varphi} \ar@{-->}[dr] & R[\overline{X}]/(X_1^{p^n},\ldots,X_e^{p^n}) 
  \\ k \ar[u] \ar[r]   & R[\overline{X}]/(X_1^{p^{n+1}},\ldots,X_e^{p^{n+1}}) \ar[u]_{\pi} }
\end{equation*} 
where $\varphi(x):=\sum\limits_{\mathbf{i}\in[p^n]^e}D_{\mathbf{i}}(x)\bar{X}^{\mathbf{i}}+(X_1^{p^n},\ldots,X_e^{p^n})$
and $\pi$ is the quotient map.
\end{proof}

\begin{remark}
 Theorem \ref{HSderiv} is a generalisation of \cite[Thm. 6.]{Mats1}. Note that the best possible situation is for
 a $k$-algebra $R$ which is \'{e}tale over $k$. In such a case there exists a unique expansion of every $m$-truncated
 $e$-dimensional HS-derivation.
\end{remark}
\noindent
By \cite[Thm. 26.9]{mat}, separability implies smoothness, so Theorem \ref{HSderiv} works
 in particular for a separable fields extension $k\subseteq K$.
Because so far we do not demand
anything from $k$ we can take $k=\mathbb{F}_p$, hence the assumption about a separable extension $k\subseteq K$ is negligible in the following way:

\begin{cor}\label{integr}
 Every $m$-truncated $e$-dimensional HS-derivation on a field $K$ has an extension to an $e$-dimensional HS-derivation.
\end{cor}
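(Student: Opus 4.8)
The plan is to reduce the statement to Theorem \ref{HSderiv} by choosing the base field to be the prime field $\mathbb{F}_p$. The first step is to observe that an $m$-truncated $e$-dimensional HS-derivation on $K$, given without any reference to a base, is automatically one over $\mathbb{F}_p$: since $\ch K = p$, the field $K$ is an $\mathbb{F}_p$-algebra, and so is $K[\bar{v}]$; the defining map $\mathbb{D}\colon K\to K[\bar{v}]$ is a ring homomorphism with $D_{\mathbf{0}}=\id_K$, hence it fixes the prime field and is therefore $\mathbb{F}_p$-linear. Thus $\mathbb{D}$ satisfies the hypotheses of Theorem \ref{HSderiv} with $k=\mathbb{F}_p$, provided only that $K$ is smooth over $\mathbb{F}_p$.

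The second step is to verify this smoothness. Since $\mathbb{F}_p$ is a perfect field, every field extension $\mathbb{F}_p\subseteq K$ is separable (equivalently, $K$ and $\mathbb{F}_p^{1/p}=\mathbb{F}_p$ are trivially linearly disjoint over $\mathbb{F}_p$). By \cite[Thm. 26.9]{mat}, separability implies smoothness, so $K$ is indeed a smooth $\mathbb{F}_p$-algebra. Applying Theorem \ref{HSderiv} with $R=K$ and $k=\mathbb{F}_p$ then yields an $e$-dimensional HS-derivation on $K$ restricting to the given truncation on $[p^m]^e$, which is exactly the claim.

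There is no genuine obstacle here; the whole content of the corollary is the observation that the separability (hence smoothness) hypothesis of Theorem \ref{HSderiv} can always be arranged for free by descending the base all the way down to $\mathbb{F}_p$. The only points requiring a moment's care are the automatic $\mathbb{F}_p$-linearity in the first step and the perfectness of $\mathbb{F}_p$ in the second, both of which are elementary.
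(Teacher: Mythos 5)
Your argument is correct and is essentially identical to the paper's: the text preceding the corollary likewise invokes \cite[Thm.\ 26.9]{mat} to pass from separability over the perfect field $\mathbb{F}_p$ to smoothness, and then applies Theorem \ref{HSderiv} with $k=\mathbb{F}_p$. Your added remark about the automatic $\mathbb{F}_p$-linearity of $\mathbb{D}$ is a sensible (if elementary) point the paper leaves implicit.
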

\noindent
We call an $m$-truncated $e$-dimensional HS-derivation $\mathbb{D}$ on $R$ \textit{integrable} if there exists $e$-dimensional HS-derivation
$\mathbb{D}'$ on $R$ such that $D'_{\mathbf{i}}=D_{\mathbf{i}}$ for every $\mathbf{i}\in [p^m]^e$. Corollary \ref{integr} says
that truncated HS-derivations on a field are always integrable, but it is not true for arbitrary rings \cite[Example 3.]{Mats1}.
Moreover, the described situation dramatically changes after adding some iterativity conditions.
Before considering iterative HS-derivations, we state more well-known facts about general HS-derivations, which will be needed in the sequel.

\begin{lemma}\label{smooth}
 Assume that $R\xrightarrow{f} S$ is a homomorphism of $k$-algebras. Let $\mathbb{D}$ be an $m$-truncated
 $e$-dimensional  HS-derivation on $R$ over $k$.
 \begin{enumerate}
  \item[i)] If $S$ is smooth over $R$, then
  there exists an $m$-truncated $e$-dimensional HS-derivation $\mathbb{D}'$ on $S$ over $k$
  such that for every $i_1,\ldots,i_e<p^m$
 \begin{equation}\label{etale01}
  fD_{(i_1,\ldots,i_e)}=D'_{(i_1,\ldots,i_e)}f.
 \end{equation}
 \item[ii)] If $S$ is unramified over $R$, then
  there exists at most one $m$-truncated $e$-dimensional HS-derivation $\mathbb{D}'$ on $S$ over $k$
  such that for every $i_1,\ldots,i_e<p^m$ 
 \begin{equation*}
  fD_{(i_1,\ldots,i_e)}=D'_{(i_1,\ldots,i_e)}f.
 \end{equation*}
 \end{enumerate}
\end{lemma}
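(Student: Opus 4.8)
The plan is to recast the compatibility conditions as a relative lifting problem for $R$-algebras and then read off both claims from the infinitesimal criteria for smoothness and for unramifiedness. Throughout, I identify an $m$-truncated $e$-dimensional HS-derivation on $S$ over $k$ with a $k$-algebra homomorphism $\mathbb{D}'\colon S\to S[\bar v]$ such that $D'_{\mathbf 0}=\id_S$, as in the definition. Let $f_{[\bar v]}\colon R[\bar v]\to S[\bar v]$ be the homomorphism obtained by applying $f$ to coefficients, endow $S[\bar v]$ with the $R$-algebra structure given by the composite $R\xrightarrow{\mathbb{D}}R[\bar v]\xrightarrow{f_{[\bar v]}}S[\bar v]$, and regard $S$ as an $R$-algebra via $f$. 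Write $N$ for the augmentation ideal $(v_1,\ldots,v_e)$ of $S[\bar v]$. For finite $m$ the relation $v_i^{p^m}=0$ makes every monomial of total degree exceeding $e(p^m-1)$ vanish, so $N^{e(p^m-1)+1}=0$; moreover $S[\bar v]/N\cong S$ via the augmentation, and since $D_{\mathbf 0}=\id_R$ the induced $R$-algebra structure on this quotient is again $f$.

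The crux is the following dictionary, which I would check first by comparing coefficients of $\bar v^{\mathbf i}$: a $k$-algebra homomorphism $\mathbb{D}'\colon S\to S[\bar v]$ satisfies $fD_{\mathbf i}=D'_{\mathbf i}f$ for all $\mathbf i\in[p^m]^e$ if and only if $\mathbb{D}'$ is a homomorphism of $R$-algebras for the structures just fixed. Indeed, evaluating $\mathbb{D}'$ on $f(r)$ and equating the coefficient of $\bar v^{\mathbf i}$ turns $R$-linearity into exactly $D'_{\mathbf i}(f(r))=f(D_{\mathbf i}(r))$; conversely these identities give $\mathbb{D}'(f(r))=f_{[\bar v]}(\mathbb{D}(r))$, which is $R$-linearity because $\mathbb{D}'$ is multiplicative. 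In the same spirit, $D'_{\mathbf 0}=\id_S$ is precisely the statement that $\mathbb{D}'$ lifts $\id_S$ along the augmentation $S[\bar v]\twoheadrightarrow S$. Hence a compatible $m$-truncated HS-derivation on $S$ is the same datum as an $R$-algebra homomorphism $\mathbb{D}'\colon S\to S[\bar v]$ lifting $\id_S\colon S\to S[\bar v]/N$.

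Granting this dictionary, part i) is immediate: $S$ is smooth over $R$ and $N$ is nilpotent, so the $R$-algebra homomorphism $\id_S\colon S\to S[\bar v]/N$ lifts to an $R$-algebra homomorphism $\mathbb{D}'\colon S\to S[\bar v]$, whose coefficients form the desired truncated HS-derivation satisfying \eqref{etale01}. For part ii), if $\mathbb{D}'$ and $\mathbb{D}''$ are two compatible derivations, then both are $R$-algebra homomorphisms $S\to S[\bar v]$ reducing to $\id_S$ modulo the nilpotent ideal $N$; unramifiedness of $S$ over $R$ says that the reduction map on $R$-algebra homomorphisms into $S[\bar v]$ is injective, forcing $\mathbb{D}'=\mathbb{D}''$.

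I expect the only real obstacle to be the bookkeeping in the dictionary step — in particular, keeping straight that the $R$-algebra structure on the target $S[\bar v]$ is the twisted one coming from $\mathbb{D}$ rather than the naive inclusion, and confirming through $D_{\mathbf 0}=\id_R$ that it collapses to $f$ on the quotient, so that $\id_S$ is genuinely the map to be lifted. Once this is in place both statements are formal. The case $m=\infty$, where $N$ is no longer nilpotent, requires only the obvious adjustment: one applies the lifting (respectively uniqueness) criterion to each finite quotient $S[\bar X]/(X_1,\ldots,X_e)^n$ and passes to the inverse limit, exactly as in the recursive argument proving Theorem \ref{HSderiv}.
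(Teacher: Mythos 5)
Your argument is correct and complete. The paper gives no proof of its own here (it simply cites \cite[Prop.~3.3]{HK}), but your reduction of the compatibility condition to an infinitesimal lifting/uniqueness problem for $R$-algebra maps $S\to S[\bar v]$ along the nilpotent augmentation ideal — with the $R$-algebra structure on $S[\bar v]$ twisted by $\mathbb{D}$ and collapsing to $f$ on the quotient because $D_{\mathbf 0}=\id_R$ — is exactly the standard argument behind that reference, and it is the same smoothness-lifting technique the paper itself uses to prove Theorem~\ref{HSderiv}.
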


\begin{proof}
 The lemma just reformulates \cite[Prop. 3.3]{HK}. 
\end{proof}

\begin{fact}
For every $m$-truncated $e$-dimensional HS-derivation and every $x\in R$ the following holds
 $$D_{(i_1,\ldots,i_e)}(x^p)=\begin{cases} D_{(\frac{i_1}{p},\ldots,\frac{i_e}{p})}(x)^p &\mbox{if } p|i_1,\ldots,i_e
			      \\ 0 &\mbox{otherwise.}
                 \end{cases}$$
\end{fact}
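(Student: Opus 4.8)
The plan is to exploit the one essential feature of the situation that we have not yet used, namely that $R[\bar{v}]$ is a ring of characteristic $p$ (it is an $R$-algebra and $R$ is a $k$-algebra with $\ch k = p$), so that the $p$-th power map on $R[\bar{v}]$ is additive. First I would write out the defining homomorphism property: since $\mathbb{D}\colon R \to R[\bar{v}]$ is a $k$-algebra homomorphism,
$$\mathbb{D}(x^p) = \mathbb{D}(x)^p = \Bigl(\sum_{\mathbf{i}\in[p^m]^e} D_{\mathbf{i}}(x)\,\bar{v}^{\mathbf{i}}\Bigr)^p.$$

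The key step is then to apply the ``freshman's dream'' in $R[\bar{v}]$, distributing the $p$-th power over the sum and over each product $D_{\mathbf{i}}(x)\bar{v}^{\mathbf{i}}$, which yields
$$\mathbb{D}(x^p) = \sum_{\mathbf{i}\in[p^m]^e} D_{\mathbf{i}}(x)^p\,\bar{v}^{p\mathbf{i}},$$
where $\bar{v}^{p\mathbf{i}} = v_1^{pi_1}\cdots v_e^{pi_e}$. Now I would reindex by $\mathbf{j} = p\mathbf{i}$: as $\mathbf{i}$ runs over $[p^m]^e$, the tuple $\mathbf{j}$ runs over those elements of $[p^m]^e$ all of whose coordinates are divisible by $p$, while the remaining monomials $\bar{v}^{\mathbf{j}}$ with some $j_l$ not divisible by $p$ simply do not occur on the right-hand side. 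One must only check that truncation is respected: a factor $v_l^{pi_l}$ is killed in $R[\bar{v}]$ exactly when $pi_l \geq p^m$, i.e. $i_l \geq p^{m-1}$, and such an index would correspond to $j_l = pi_l \geq p^m$, which already lies outside the range $[p^m]$; so no surviving term is lost and no spurious term is introduced.

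To finish, I would compare the two expansions of $\mathbb{D}(x^p)$ in the free $R$-module $R[\bar{v}]$, whose basis is $\{\bar{v}^{\mathbf{j}} : \mathbf{j}\in[p^m]^e\}$. By definition $\mathbb{D}(x^p) = \sum_{\mathbf{j}\in[p^m]^e} D_{\mathbf{j}}(x^p)\,\bar{v}^{\mathbf{j}}$, and equating the coefficient of each $\bar{v}^{\mathbf{j}}$ with the coefficient obtained above gives $D_{\mathbf{j}}(x^p) = D_{\mathbf{j}/p}(x)^p$ when every $j_l$ is divisible by $p$, and $D_{\mathbf{j}}(x^p)=0$ otherwise, which is exactly the assertion. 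The computation is essentially mechanical; the only point demanding care---and thus the main (and fairly minor) obstacle---is the bookkeeping in the reindexing together with the verification that the truncation relations $v_l^{p^m}=0$ are compatible with the passage from $\mathbf{i}$ to $p\mathbf{i}$, so that coefficient comparison is legitimate. The argument is uniform in $m$, including the case $m=\infty$ where $R[\bar{v}] = R\llbracket\bar{X}\rrbracket$, uniqueness of power-series coefficients replaces freeness, and no term is ever truncated.
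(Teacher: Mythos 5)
Your proof is correct. The paper offers no argument for this Fact at all---its ``proof'' is the single sentence ``It is a common and easy fact''---and your computation (the homomorphism property $\mathbb{D}(x^p)=\mathbb{D}(x)^p$, additivity of the $p$-th power in the characteristic-$p$ ring $R[\bar v]$, reindexing $\mathbf{j}=p\mathbf{i}$, and coefficient comparison in the free $R$-module $R[\bar v]$) is precisely the standard argument being alluded to. The truncation bookkeeping you flag as the only delicate point is handled correctly: indices $i_l\ge p^{m-1}$ produce monomials killed by $v_l^{p^m}=0$ and correspond exactly to $j_l\ge p^m$ outside the index range, so nothing is lost or gained in the comparison.
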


\begin{proof}
 It follows from the definition (see e.g. \cite[Lemma 1.1]{okugawa}).	
\end{proof}

\subsection{Iterative HS-derivations}
In this subsection we deal with iterative HS-derivations. The main purpose is to provide basic properties. 
Let $F(\bar{v},\bar{w})=(F_1(\bar{v},\bar{w}),\ldots,F_e(\bar{v},\bar{w}))\in \big(k[\bar{v},\bar{w}]\big)^e$
(still $k[\bar{v},\bar{w}]=k\llbracket\bar{X},\bar{Y}\rrbracket$ for $m=\infty$) and let $\mathbb{D}$ be an $m$-truncated
$e$-dimensional HS-derivation on $R$ over $k$. Sometimes we need to distinguish between $\mathbb{D}:R\to R[\bar{v}]$
and $\mathbb{D}:R\to R[\bar{w}]$, therefore they will be denoted by $\mathbb{D}_{\bar{v}}$ and $\mathbb{D}_{\bar{w}}$ respectively.

\begin{definition}\label{defiterativ}
 We call $\mathbb{D}$ \emph{$F$-iterative} if the following diagram commutes
 \begin{equation*}
 \xymatrixcolsep{4.5pc}\xymatrixrowsep{1.5pc}\xymatrix{ 
  R \ar[r]^{\mathbb{D}_{\bar{v}}} \ar[d]_{\mathbb{D}_{\bar{v}}} & R[\bar{v}] \ar[d]^{\mathbb{D}_{\bar{w}}[\bar{v}]}
  \\ R[\bar{v}]  \ar[r]_{\ev_F}   & R[\bar{v},\bar{w}] }
\end{equation*}
where $\mathbb{D}_{\bar{w}}[\bar{v}](\sum\limits_{\mathbf{i}}r_{\mathbf{i}}\bar{v}^{\mathbf{i}}):=
\sum\limits_{\mathbf{i}}\mathbb{D}_{\bar{w}}(r_{\mathbf{i}})\bar{v}^{\mathbf{i}}$. We will write shortly \emph{$F$-derivation} for
an $F$-iterative $m$-truncated $e$-dimensional HS-derivation over $k$.
\end{definition}
\noindent

\begin{example}
 For $m=\infty$ and $e=1$ we can take $F=\mathbb{G}_a=X+Y$. It encodes the classical iterativity rule
 $$D_i\circ D_j={i+j\choose i}D_{i+j}.$$
 An example of a $\mathbb{G}_a$-derivation is the following collection of functions on $k[X]$:
 $$D_n\big(\sum\limits_{i=0}^{k}\alpha_iX^i\big)=\begin{cases} 0,\quad\text{if}\;\;\;n>k
\\ \sum\limits_{i=n}^{k}\alpha_i{i\choose n}X^{i-n},\quad\text{if}\;\;\;n\le k,
\end{cases}$$
where $n\in\mathbb{N}$.
For the formal group law $F=\mathbb{G}_m=X+Y+XY$ above formulas are more complicated (see \cite[Example 3.6]{HK1}).
\end{example}

\begin{example}\label{exDF}
 For every formal group law $F(\bar{X},\bar{Y})\in(k\llbracket\bar{X},\bar{Y}\rrbracket)^e$ we have \textit{canonical $F$-derivation}
 $$\mathbb{D}^F:=\ev_{F(\bar{X},\bar{Y})}:k\llbracket\bar{X}\rrbracket\to k\llbracket\bar{X}\rrbracket\llbracket\bar{Y}\rrbracket.$$
Compare with \cite[Example 3.25]{HK}.
 \end{example}

\begin{example}
For actions of finite group schemes, which underlying Hopf algebra is defined on $k[\bar{v}]$, we have a natural correspondence with the truncated $F$-derivations for an appropriate $F$ (see Section 3. in \cite{HK}). Therefore we are especially interested in group scheme actions of $k$-group schemes of the form $\mathfrak{g}=\spe k[\bar{v}]$ on the scheme $\spe R$. 
By \cite[Remark 3.9]{HK}, such a group scheme action corresponds to an $F$-derivation on $R$, where $F$ is the Hopf algebra comultiplication given by $\mathfrak{g}$.
\end{example}

\noindent
Assume that $R$ is a $k$-algebra with an $F$-derivation $\mathbb{D}$. The pair $(R,\mathbb{D})$ will be called an \emph{$F$-ring}.
If $K$ is a field and $(K,\mathbb{D})$ is an $F$-ring, then $(K,\mathbb{D})$ will be called an \emph{$F$-field}.
Let $(R,\mathbb{D})$ be an $F$-ring, similarly $(S,\mathbb{D}')$. A morphism of $k$-algebras $f:R\to S$
is an \emph{$F$-morphism} if for every $\mathbf{i}$,  $fD_{\mathbf{i}}=D'_{\mathbf{i}}f$. Moreover, if such $f$ is injective,
$R$ is \emph{$F$-subring} of $S$ (similarly \emph{$F$-subfield} for $F$-fields).

\begin{example}\label{exDG}
 Let $G$ be an algebraic group over $k$, we denote by $\mathcal{O}_G$ the local ring of $G$ at the identity (it is a regular local ring) and
 by $\bar{x}=(x_1,\ldots,x_e)$ a choice of its local parameters. For $F=\hat{G}$ we have $F(\bar{x},\bar{Y})\in\mathcal{O}_G\llbracket \bar{Y}\rrbracket$,
 so $(\mathcal{O}_G,\mathbb{D}^{F}|_{\mathcal{O}_G})$ is an $F$-subring of $(k\llbracket\bar{X}\rrbracket,\mathbb{D}^F)$.
 Hence $k(G)$ is equipped with a natural $\hat{G}$-derivation, which will be denoted by $\mathbb{D}^G$ and called \textit{canonical $G$-derivation}.
 It depends on the choice of local parameters, but we prefer the adjective ``canonical''.
 For more details check \cite[Example 3.27]{HK}.
\end{example}

\noindent
For an $F$-ring $(R,\mathbb{D})$ and $\mathbf{i}\in [p^m]^{e}$ we introduce $C_{\mathbf{i}}:=\ker D_{\mathbf{i}}$, and two more sets:
$$C_R:=C_{(1,0\ldots,0)}\cap\ldots\cap C_{(0,\ldots,0,1)}\quad\text{ (the ring of constants)},$$
and
$$C_R^{\text{abs}}:=\bigcap\limits_{\mathbf{i}\not=\mathbf{0}}C_{\mathbf{i}}\quad\text{ (the ring of absolute constants)}.$$
Both, $C_R$ and $C_R^{\text{abs}}$, are subrings of $R$ (see Remark \ref{Fs}).

\begin{lemma}\label{smooth2}
 Assume that $R\xrightarrow{f} S$ is a homomorphism of $k$-algebras. Let $\mathbb{D}$ be an $F$-derivation on $R$.
 \begin{enumerate}
  \item[i)] If $S$ is \'{e}tale (smooth and unramified) over $R$, then
  there exists a unique $F$-derivation $\mathbb{D}'$ on $S$
  such that for every $i_1,\ldots,i_e<p^m$
 \begin{equation*}\label{etale01}
  fD_{(i_1,\ldots,i_e)}=D'_{(i_1,\ldots,i_e)}f.
 \end{equation*}
 \item[ii)] If $S$ is unramified over $R$, then
  there exists at most one $F$-derivation $\mathbb{D}'$ on $S$
  such that for every $i_1,\ldots,i_e<p^m$ 
 \begin{equation*}
  fD_{(i_1,\ldots,i_e)}=D'_{(i_1,\ldots,i_e)}f.
 \end{equation*}
 \end{enumerate}
\end{lemma}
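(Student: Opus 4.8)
The statement is essentially the iterative ($F$-derivation) refinement of Lemma~\ref{smooth}, so the plan is to bootstrap from that lemma rather than redo the smoothness lifting from scratch. First I would invoke Lemma~\ref{smooth} to get, in part~(i), an $m$-truncated $e$-dimensional HS-derivation $\mathbb{D}'$ on $S$ satisfying $fD_{\mathbf{i}}=D'_{\mathbf{i}}f$, and in part~(ii) the uniqueness of such a truncated HS-derivation compatible with $f$. What remains, and what is genuinely new here, is to check that the $F$-iterativity of $\mathbb{D}$ transfers to $\mathbb{D}'$, i.e.\ that the commuting square of Definition~\ref{defiterativ} holds for $\mathbb{D}'$ on $S$.

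\medskip
\emph{Uniqueness (ii).} I would handle this first, since it is the cleaner half and feeds into (i). An $F$-derivation is in particular an $m$-truncated HS-derivation, so any two $F$-derivations $\mathbb{D}',\mathbb{D}''$ on $S$ compatible with $f$ are in particular two truncated HS-derivations compatible with $f$; Lemma~\ref{smooth}(ii), applied under the unramifiedness of $S$ over $R$, forces $\mathbb{D}'=\mathbb{D}''$. Iterativity plays no role in uniqueness---it is inherited for free once the underlying truncated HS-derivations agree.

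\medskip
\emph{Existence and iterativity (i).} Take the $\mathbb{D}'$ produced by Lemma~\ref{smooth}(i); I must verify the iterativity square. The strategy is to compare the two composites $\ev_F\circ\mathbb{D}'_{\bar v}$ and $\mathbb{D}'_{\bar w}[\bar v]\circ\mathbb{D}'_{\bar v}$ as $k$-algebra homomorphisms $S\to S[\bar v,\bar w]$, and to show they coincide by a uniqueness argument living over $R$. The key observation is that $S[\bar v,\bar w]$ is unramified (indeed étale) over $S$, because $S[\bar v,\bar w]$ is a base change of the étale/unramified extension $S$ of $R$ tensored with the free truncation in $\bar v,\bar w$; unramifiedness is stable under base change. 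On the one hand, both composites restrict along $f$ to the corresponding composites for $\mathbb{D}$ on $R$, which agree because $\mathbb{D}$ is $F$-iterative. On the other hand, both are $m$-truncated HS-derivation-type maps on $S$ compatible with the same data over $R$. So the uniqueness clause---either part~(ii) of this lemma or directly Lemma~\ref{smooth}(ii), now applied to the étale tower $R\to S\to S[\bar v,\bar w]$---pins the two $S$-level composites down to a single map, giving commutativity of the square and hence $F$-iterativity of $\mathbb{D}'$.

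\medskip
The step I expect to be the main obstacle is the bookkeeping in the iterativity verification: phrasing ``both composites agree after restriction along $f$ and are determined by that restriction'' as a genuine instance of the unramified uniqueness statement. Concretely, I need to identify the correct intermediate ring over which to apply unramifiedness (the point is that $\ev_F$ and the double-derivation map are both $R$-algebra-compatible in a way that lets the unique-lifting property over the unramified extension conclude equality), and to confirm that the relevant truncation and base-change operations preserve the étale/unramified hypotheses so that Lemma~\ref{smooth} is applicable at the second stage. Once that is set up correctly, the conclusion is formal.
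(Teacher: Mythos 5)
Your overall route is the same as the paper's: part ii) is exactly the paper's argument (immediate from Lemma \ref{smooth}.ii), since an $F$-derivation is in particular an $m$-truncated HS-derivation), and for part i) the paper likewise takes the $\mathbb{D}'$ supplied by Lemma \ref{smooth}.i) and checks $F$-iterativity by the uniqueness-over-an-unramified-extension mechanism, referring to the proof of Matsumura's Theorem 27.2 and leaving the details to the reader. So your identification of the key mechanism is the intended one.

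One assertion in your iterativity verification is false, although it is not the one your argument actually needs: $S[\bar{v},\bar{w}]$ is \emph{not} unramified (let alone \'{e}tale) over $S$ --- in characteristic $p$ the relation $p^{m}v_i^{p^m-1}\,dv_i=0$ is vacuous, so $\Omega_{S[\bar{v},\bar{w}]/S}\neq 0$ --- and it is a base change of $R\to S$ only along $R\to R[\bar{v},\bar{w}]$, which gives unramifiedness over $R[\bar{v},\bar{w}]$, not over $S$. The unramifiedness you must use is still that of $S$ over $R$; the ring $S[\bar{v},\bar{w}]$ enters only as the codomain $C$ with nilpotent ideal $N=(\bar{v},\bar{w})$ in the unique-lifting property. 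The clean packaging is the alternative you mention in passing: both composites $\ev_F\circ\mathbb{D}'_{\bar{v}}$ and $\mathbb{D}'_{\bar{w}}[\bar{v}]\circ\mathbb{D}'_{\bar{v}}$ are $k$-algebra homomorphisms $S\to S[\bar{v},\bar{w}]$ with identity constant term, hence $2e$-dimensional $m$-truncated HS-derivations on $S$, and each of their components intertwines via $f$ with the corresponding component of the single $2e$-dimensional HS-derivation on $R$ given by either side of the iterativity square for $\mathbb{D}$ (these two sides agree because $\mathbb{D}$ is $F$-iterative, and the intertwining passes to the $k$-linear combinations of the $D_{\mathbf{i}}$ occurring in $\ev_F\circ\mathbb{D}_{\bar{v}}$). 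Lemma \ref{smooth}.ii), applied with $2e$ in place of $e$ to the unramified extension $R\to S$, then forces the two composites to coincide. With that correction your proof is complete and agrees with the paper's intended argument.
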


\begin{proof}
 Compare to \cite[Prop. 3.18]{HK}. 
 Part ii) is, by Lemma \ref{smooth}.ii), true even without the iterativity assumption.
 For the proof of part i), it is enough to show that an HS-derivation $\mathbb{D}'$ from Lemma \ref{smooth} is $F$-iterative,
 i.e. the following diagram is commutative
  \begin{equation*}
  \xymatrixcolsep{4pc}\xymatrix{ S \ar[r]^{\mathbb{D}'_{\bar{v}}} \ar[d]_{\mathbb{D}'_{\bar{v}}} & S[\overline{v}] \ar[d]^{\mathbb{D}'_{\bar{w}}[\overline{v}]} \\
	   S[\overline{v}] \ar[r]_{\ev_{F}} & S[\overline{v}, \overline{w}]}
 \end{equation*}
It is similar to the proof of \cite[Theorem 27.2]{mat} and we leave it to the reader.
\end{proof}

\noindent
Let $F(\bar{v},\bar{w})\in(k[\bar{v},\bar{w}])^e$,
$m'\le m$ and let $\bar{v'}$, $\bar{w'}$ denote the $m'$-truncated variables ($k[\bar{v'},\bar{w'}]=k[\bar{X},\bar{Y}]/(X_1^{p^{m'}},\ldots,X_e^{p^{m'}},
Y_1^{p^{m'}},\ldots,Y_e^{p^{m'}})$).
By $F[m']$ we denote the $m'$-truncation of $F$ which is equal to $\ev_{(\bar{v'},\bar{w'})}F(\bar{v},\bar{w})$
(the image of $F$ in the ring of truncated polynomials $\big(k[\bar{v'},\bar{w'}]\big)^e$).
If $\mathbb{D}$ is $F$-iterative,
then $\mathbb{D}[m']$ is $F[m']$-iterative as well (for the notion of $\mathbb{D}[m]$, check the first lines after Definition \ref{HSdef1}).

\begin{example}
  For every $m\in\mathbb{N}_{>0}$ we get a $\hat{G}[m]$-field structure on $k(G)$ - just consider $\mathbb{D}^G[m]$. 
\end{example}

\begin{definition}\label{defintegrable}
 Let $F(\bar{X},\bar{Y})\in(k\llbracket\bar{X},\bar{Y}\rrbracket)^e$ and let $\mathbb{D}$ be an $F[m]$-derivation on a $k$-algebra $R$.
 We call $\mathbb{D}$ \emph{strongly integrable} if there exists an $F$-derivation $\mathbb{D}'$ on $R$ such that
 $\mathbb{D}'[m]=\mathbb{D}$.
\end{definition}

\noindent
In the next few facts we give simple properties of $F$-derivations on a $k$-algebra $R$.
Those facts were intended for a formal group law $F$, 
but it is enough to demand that
$F(\bar{v},\bar{w})\in(k[\bar{v},\bar{w}])^e$, $F(\bar{v},\bar{0})=\bar{v}$ and $F(\bar{0},\bar{w})=\bar{w}$. However, we do not consider
$F$-derivations in the case when $F$ is not a formal group law, even the existence for such (non-trivial) derivations is not clear in general.

\begin{fact}\label{Newton-comp} For every $\mathbf{i}$ and $\mathbf{j}$ there exists
$\mathbf{r}(D_{\mathbf{j}'})_{0<|\mathbf{j}'|<|\mathbf{i}+\mathbf{j}|}$, a $k$-linear
combination of $D_{\mathbf{j}'}$, where $0<|\mathbf{j}'|<|\mathbf{i}+\mathbf{j}|$, such that
 $$D_{\mathbf{j}}D_{\mathbf{i}}=
 {i_1+j_1 \choose i_1}\ldots{i_e+j_e \choose i_e}D_{\mathbf{i}+\mathbf{j}}
 +\mathbf{r}(D_{\mathbf{j}'})_{0<|\mathbf{j}'|<|\mathbf{i}+\mathbf{j}|}.$$
\end{fact}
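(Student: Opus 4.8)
The plan is to read the claimed relation straight off the $F$-iterativity identity by comparing coefficients of the monomials $\bar{v}^{\mathbf{i}}\bar{w}^{\mathbf{j}}$. First I would evaluate the commuting square of Definition \ref{defiterativ} on an element $r\in R$. The composite $\mathbb{D}_{\bar{w}}[\bar{v}]\circ\mathbb{D}_{\bar{v}}$ sends $r$ to $\sum_{\mathbf{i},\mathbf{j}}D_{\mathbf{j}}D_{\mathbf{i}}(r)\,\bar{v}^{\mathbf{i}}\bar{w}^{\mathbf{j}}$, while $\ev_{F}\circ\mathbb{D}_{\bar{v}}$ sends $r$ to $\sum_{\mathbf{n}}D_{\mathbf{n}}(r)\,F(\bar{v},\bar{w})^{\mathbf{n}}$, where $F^{\mathbf{n}}=F_1^{n_1}\cdots F_e^{n_e}$. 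Equating the two and comparing the coefficient of a fixed $\bar{v}^{\mathbf{i}}\bar{w}^{\mathbf{j}}$ (with all entries $<p^m$, so these monomials form a basis of $R[\bar{v},\bar{w}]$), the left-hand side contributes exactly $D_{\mathbf{j}}D_{\mathbf{i}}$, and the whole problem reduces to analysing which $D_{\mathbf{n}}$ appear on the right and with which $k$-coefficients.

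The second step is the degree bookkeeping on the right-hand side. The normalisation hypotheses $F(\bar{v},\bar{0})=\bar{v}$ and $F(\bar{0},\bar{w})=\bar{w}$ force each coordinate into the shape $F_l=v_l+w_l+(\text{terms involving both }\bar{v}\text{ and }\bar{w},\text{ of total degree}\ge 2)$. Hence the monomials of least total degree in $F^{\mathbf{n}}$ have degree $|\mathbf{n}|$ and are precisely those of $\prod_l(v_l+w_l)^{n_l}$. Since $\bar{v}^{\mathbf{i}}\bar{w}^{\mathbf{j}}$ has total degree $|\mathbf{i}+\mathbf{j}|$, only $\mathbf{n}$ with $|\mathbf{n}|\le|\mathbf{i}+\mathbf{j}|$ can contribute. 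For $|\mathbf{n}|=|\mathbf{i}+\mathbf{j}|$ the target monomial sits at minimal degree, so it can only come from $\prod_l(v_l+w_l)^{n_l}$, which contains $\bar{v}^{\mathbf{i}}\bar{w}^{\mathbf{j}}$ iff $n_l=i_l+j_l$ for every $l$, i.e. $\mathbf{n}=\mathbf{i}+\mathbf{j}$, with coefficient $\prod_l\binom{i_l+j_l}{i_l}$. This produces the displayed leading term.

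Finally I would gather the remaining contributions, coming from $\mathbf{n}$ with $|\mathbf{n}|<|\mathbf{i}+\mathbf{j}|$. Their coefficients are polynomial expressions in the structure constants of $F$, hence lie in $k$, so together they assemble into a $k$-linear combination of operators $D_{\mathbf{n}}$ with $|\mathbf{n}|<|\mathbf{i}+\mathbf{j}|$. Assuming $\mathbf{i}+\mathbf{j}\ne\mathbf{0}$ (the case $\mathbf{i}+\mathbf{j}=\mathbf{0}$ being trivial, as both sides reduce to $\id$), the index $\mathbf{n}=\mathbf{0}$ cannot occur, since $F^{\mathbf{0}}=1$ feeds only the monomial $\bar{v}^{\mathbf{0}}\bar{w}^{\mathbf{0}}$; thus all surviving indices satisfy $0<|\mathbf{n}|<|\mathbf{i}+\mathbf{j}|$, which is exactly the asserted error term $\mathcal{O}(D_{\mathbf{j}'})_{0<|\mathbf{j}'|<|\mathbf{i}+\mathbf{j}|}$.

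The only point demanding genuine care, and so the main obstacle, is making this total-degree argument airtight in the \emph{truncated} ring $R[\bar{v},\bar{w}]=R[\bar{X},\bar{Y}]/(X_l^{p^m},Y_l^{p^m})$: one must rule out spurious contributions to $\bar{v}^{\mathbf{i}}\bar{w}^{\mathbf{j}}$ created by truncation. This is harmless because truncation only annihilates monomials having some exponent $\ge p^m$ and never folds a high monomial down onto a surviving one; consequently the coefficient of $\bar{v}^{\mathbf{i}}\bar{w}^{\mathbf{j}}$ (all entries $<p^m$) equals the coefficient computed in the untruncated polynomial ring, so the clean degree filtration used above is preserved for precisely the monomials we read off. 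In particular the surviving factor $v_l^{i_l}w_l^{j_l}$ from $\binom{i_l+j_l}{i_l}$ persists even when $i_l+j_l\ge p^m$, since its two exponents are individually below $p^m$.
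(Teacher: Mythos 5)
Your argument is correct and is essentially the paper's own proof: both evaluate the $F$-iterativity square on $r$, write each coordinate of $F$ as $v_l+w_l+S_l$ with $S_l$ in the ideal generated by the mixed monomials $v_iw_j$, and extract the coefficient of $\bar{v}^{\mathbf{i}}\bar{w}^{\mathbf{j}}$ by the same total-degree count, so that only $\mathbf{n}=\mathbf{i}+\mathbf{j}$ survives at top degree with coefficient $\prod_l\binom{i_l+j_l}{i_l}$. Your extra paragraph on the harmlessness of truncation is a point the paper silently elides, but it does not change the route.
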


\begin{proof}
 It is clear for $\mathbf{i}=\mathbf{0}$ or $\mathbf{j}=\mathbf{0}$, so assume that both $\mathbf{i}$ and $\mathbf{j}$ differ from $\mathbf{0}$.
 Since $F(\bar{v},0)=\bar{v}$, $F(0,\bar{w})=\bar{w}$, we have $F(\bar{v},\bar{w})=(v_1+w_1+S_1,\ldots,v_e+w_e+S_e)$
 for some $S_1,\ldots,S_e$ belonging to the ideal $(v_iw_j)_{i,j\le e}$. Therefore for every $r\in R$
 $$\sum\limits_{j_1,\ldots,j_e,i_1,\ldots,i_e} D_{(j_1,\ldots,j_e)}D_{(i_1,\ldots,i_e)}(r)v_1^{i_1}\cdot\ldots\cdot v_e^{i_e}\cdot w_1^{j_1}\cdot\ldots\cdot w_e^{j_e}=$$
 $$\sum\limits_{k_1,\ldots,k_e} D_{(k_1,\ldots,k_e)}(r)(v_1+w_1+S_1)^{k_1}\cdot\ldots\cdot (v_e+w_e+S_e)^{k_e}.$$
We are interested in the coefficients at $A:=v_1^{i_1}\cdot\ldots\cdot v_e^{i_e}\cdot w_1^{j_1}\cdot\ldots\cdot w_e^{j_e}$ on the right side
of the above equation.
First of all, note that $v_i+w_i+S_i$, $i\leqslant e$, is an element of the maximal ideal $(v_1,\ldots,v_e,w_1,\ldots,w_e)$, hence it is of the form
$$\alpha_1v_1+\ldots+\alpha_ev_e+\beta_1 w_1+\ldots+\beta_ew_e,$$
for some $\alpha_1,\ldots,\alpha_e,\beta_1,\ldots,\beta_e\in k[\bar{v},\bar{w}]$.
Each component of the above sum has total degree at least $1$, so the total degree of each summand of $(v_i+w_i+S_i)^{k_i}$ is at least $k_i$. Therefore the total degree of
$$(v_1+w_1+S_1)^{k_1}\cdot\ldots\cdot (v_e+w_e+S_e)^{k_e}$$
it at least equal to $k_1+\ldots+k_e$. On the other hand, the total degree of $A$
is equal to $|\mathbf{i}+\mathbf{j}|$.
After comparing degrees, we see that if $k_1+\ldots+k_e>|\mathbf{i}+\mathbf{j}|$ then there is no chance to find a component of
$$D_{(k_1,\ldots,k_e)}(r)(v_1+w_1+S_1)^{k_1}\cdot\ldots\cdot (v_e+w_e+S_e)^{k_e}$$
equal to $A$ multiplied by some element of $R$.

Let $k_1+\ldots+k_e=|\mathbf{i}+\mathbf{j}|$. Since $S_1,\ldots,S_e\in (v_iw_j)_{i,j\le e}$, each summand of $S_i$, $i\leqslant e$, has total degree at least $2$. The only component of
$$D_{(k_1,\ldots,k_e)}(r)(v_1+w_1+S_1)^{k_1}\cdot\ldots\cdot (v_e+w_e+S_e)^{k_e}$$
for which the total degree will be equal $|\mathbf{i}+\mathbf{j}|$
``omits'' $S_1,\ldots,S_e$. Therefore we are looking for the coefficient of
$$D_{(k_1,\ldots,k_e)}(r)(v_1+w_1)^{k_1}\cdot\ldots\cdot (v_e+w_e)^{k_e},$$
which is divisible by $A$.
\end{proof}

\begin{fact}\label{Dpp-gen}
 Assume that also $\mathbb{D}'$ is an $F$-derivation on $R$.
 If for all $l\le e$ and $i<m$ we have
 $D_{(0,\ldots,0,\underset{l\text{-th place}}{p^i},0,\ldots,0)}=D'_{(0,\ldots,0,\underset{l\text{-th place}}{p^i},0,\ldots,0)}$ then
 $\mathbb{D}=\mathbb{D}'$.
\end{fact}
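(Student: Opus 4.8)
The plan is to prove $\mathbb{D}=\mathbb{D}'$ by showing $D_{\mathbf{i}}=D'_{\mathbf{i}}$ for every $\mathbf{i}\in[p^m]^e$, arguing by induction on the total degree $|\mathbf{i}|=i_1+\dots+i_e$. The engine is Fact~\ref{Newton-comp}, which lets me recover a ``top'' operator from a composition, up to an invertible scalar and strictly-lower-order operators:
\[
D_{\mathbf{a}+\mathbf{b}}=\Big(\textstyle\prod_{l}\binom{a_l+b_l}{a_l}\Big)^{-1}\Big(D_{\mathbf{b}}D_{\mathbf{a}}-\mathcal{O}(D_{\mathbf{j}'})_{0<|\mathbf{j}'|<|\mathbf{a}+\mathbf{b}|}\Big),
\]
valid precisely when the displayed product of binomials is nonzero in $k$, i.e.\ nonzero modulo $p$. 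The crucial observation is that the correction term $\mathcal{O}(\cdot)$ and its coefficients depend only on $F$ and the chosen multi-indices, never on the particular $F$-derivation; hence it is literally the same expression whether read off $\mathbb{D}$ or $\mathbb{D}'$.

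For the inductive step I fix $\mathbf{i}$ with $|\mathbf{i}|=n\ge 1$ and write $\mathbf{e}_l$ for the $l$-th standard basis vector of $\mathbb{N}^e$. The base case $n=0$ ($D_{\mathbf{0}}=\id=D'_{\mathbf{0}}$) and the single-coordinate prime powers $\mathbf{i}=p^s\mathbf{e}_l$ (here necessarily $s<m$, since the coordinate $p^s$ lies in $[p^m]$) are exactly the instances guaranteed by the hypothesis. So I only need to treat an $\mathbf{i}$ that is \emph{not} of that form, by exhibiting a nontrivial splitting $\mathbf{i}=\mathbf{p}+\mathbf{q}$ with $\mathbf{p},\mathbf{q}\neq\mathbf{0}$ and $\prod_l\binom{i_l}{p_l}\not\equiv 0\pmod p$. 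If $\mathbf{i}$ has at least two nonzero coordinates I peel one off: choosing $l$ with $i_l>0$ and setting $\mathbf{p}=i_l\mathbf{e}_l$, $\mathbf{q}=\mathbf{i}-i_l\mathbf{e}_l$ makes $\prod_l\binom{i_l}{p_l}=1$. If $\mathbf{i}=c\,\mathbf{e}_l$ is supported on one coordinate but $c$ is not a power of $p$, then its base-$p$ expansion has either a digit $\ge 2$ or two nonzero digits, so a carry-free decomposition $c=a+b$ with $0<a,b<c$ exists, and Lucas' theorem gives $\binom{c}{a}\equiv\prod_s\binom{c_s}{a_s}\not\equiv0\pmod p$; I take $\mathbf{p}=a\,\mathbf{e}_l$, $\mathbf{q}=b\,\mathbf{e}_l$. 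In either case $0<|\mathbf{p}|,|\mathbf{q}|<n$.

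With such a splitting the displayed formula (applied with $\mathbf{a}=\mathbf{p}$, $\mathbf{b}=\mathbf{q}$, so that the scalar is $\prod_l\binom{i_l}{p_l}$) expresses $D_{\mathbf{i}}$ through $D_{\mathbf{q}}D_{\mathbf{p}}$ and operators $D_{\mathbf{j}'}$ of degree $<n$. By the induction hypothesis each of $D_{\mathbf{p}},D_{\mathbf{q}},D_{\mathbf{j}'}$ agrees with its primed counterpart; since the scalar is an invertible element of $\mathbb{F}_p\subseteq k$ and the correction term is the same universal combination for both derivations, I conclude $D_{\mathbf{i}}=D'_{\mathbf{i}}$. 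This closes the induction and yields $\mathbb{D}=\mathbb{D}'$.

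The main obstacle is the combinatorial step: verifying that \emph{every} multi-index other than a single-coordinate prime power $p^s\mathbf{e}_l$ admits a nontrivial splitting on which the relevant product of binomials survives modulo $p$. This is exactly where Lucas' theorem (equivalently Kummer's carry criterion) is needed, and it is also the structural reason the hypothesis concerns the operators $D_{p^s\mathbf{e}_l}$ and no fewer: these are precisely the ``indecomposable'' indices that the composition formula of Fact~\ref{Newton-comp} cannot reach from below.
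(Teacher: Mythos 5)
Your proof is correct and follows essentially the same route as the paper: induction on $|\mathbf{i}|$ driven by Fact~\ref{Newton-comp}, reducing each index to strictly smaller ones (plus the hypothesized single-coordinate prime powers) via a splitting whose binomial coefficient is a unit mod $p$. The only cosmetic difference is the choice of splitting --- the paper uniformly peels off $p^s$ (the leading term of the base-$p$ expansion of a nonzero coordinate), so the scalar is the leading digit $\gamma_s\neq 0$, whereas you case-split between multi-coordinate indices and single-coordinate non-prime-powers; both rest on the same Lucas-type observation.
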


\begin{proof}
 Induction on $|\mathbf{j}|$. 
 Clearly, $D_{(0,\ldots,0)}=\id_R=D'_{(0,\ldots,0)}$.
 Take $\mathbf{j}=(j_1,\ldots,j_e)\neq (0,\ldots,0)$
 and assume that $D_{\mathbf{j}'}=D'_{\mathbf{j}'}$
 for every $\mathbf{j}'$ such that $|\mathbf{j}'|<|\mathbf{j}|$.
 Without loss of generality, we set $j_1\neq 0$.
 Let $j_1=\gamma_0+\gamma_1p+\ldots+\gamma_sp^s$, where $\gamma_0,\ldots,\gamma_s<p$ and $\gamma_s\neq 0$.
 Fact \ref{Newton-comp} implies that
 $$D_{(p^s,0,\ldots,0)} D_{(j_1-p^s,j_2,\ldots,j_e)}=\gamma_s D_{\mathbf{j}}
 +\mathbf{r}(D_{\mathbf{j}'})_{0<|\mathbf{j}'|<|\mathbf{j}|},$$
 $$D'_{(p^s,0,\ldots,0)} D'_{(j_1-p^s,j_2,\ldots,j_e)}=\gamma_s D'_{\mathbf{j}}
 +\mathbf{r}(D'_{\mathbf{j}'})_{0<|\mathbf{j}'|<|\mathbf{j}|}.$$
 A $k$-linear combination $\mathbf{r}(D_{\mathbf{j}'})_{0<|\mathbf{j}'|<|\mathbf{j}|}$   is unique for $F$ (what can be deduced from the proof of Fact \ref{Newton-comp}), hence, by the inductive assumption, it is equal to $\mathbf{r}(D'_{\mathbf{j}'})_{0<|\mathbf{j}'|<|\mathbf{j}|}$.
Moreover, it follows from the inductive assumption that
$$D_{(p^s,0,\ldots,0)} D_{(j_1-p^s,j_2,\ldots,j_e)}=D'_{(p^s,0,\ldots,0)} D'_{(j_1-p^s,j_2,\ldots,j_e)},$$
 so $D_{\mathbf{j}}=D'_{\mathbf{j}}$.
\end{proof}

\begin{lemma}\label{wronskian}
 Let $(K,\mathbb{D})$ be an $F$-field and let
 $\partial_1,\ldots,\partial_{p^e}$ be all different elements of $\lbrace D_{(i_0,\ldots,i_e)}\;\; |\;\; i_0,\ldots,i_e<p\rbrace$.
 Take any $x_1,\ldots,x_n\in K$. Elements $x_1,\ldots,x_n$ are linearly dependent over $C_K$ if
 and only if the rank of the matrix  $\big(\partial_i(x_j)\big)_{i\le p^e,j\le n}$ is smaller than $n$.
\end{lemma}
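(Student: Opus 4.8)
The plan is to reduce to the first truncation and then establish the two implications, the nontrivial one by induction on $n$. Since the matrix only involves the operators $D_{\mathbf{i}}$ with $\mathbf{i}\in[p]^e$, and since $C_K$ is defined through the first derivatives $D_{\mathbf{e}_l}$ alone (here $\mathbf{e}_l$ denotes the $l$-th standard basis multi-index), I may replace $\mathbb{D}$ by its truncation $\mathbb{D}[1]$, which is $F[1]$-iterative and has the same ring of constants. The advantage is that then every operator occurring below, including every error term produced by Fact \ref{Newton-comp}, has multi-index in $[p]^e$. The one preliminary I need is that each $\partial_i=D_{\mathbf{i}}$ is $C_K$-linear. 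For this I first check that $D_{\mathbf{i}}(c)=0$ whenever $c\in C_K$ and $0<|\mathbf{i}|$, by induction on $|\mathbf{i}|$: choosing $l$ with $i_l>0$, Fact \ref{Newton-comp} gives $D_{\mathbf{e}_l}D_{\mathbf{i}-\mathbf{e}_l}=i_l\,D_{\mathbf{i}}+\mathcal{O}(D_{\mathbf{j}'})_{0<|\mathbf{j}'|<|\mathbf{i}|}$, and evaluating at $c$ annihilates the left-hand side and every error term by the inductive hypothesis, while $i_l$ is invertible in $K$. The Leibniz rule coming from $\mathbb{D}$ being a $k$-algebra homomorphism then yields $D_{\mathbf{i}}(cx)=cD_{\mathbf{i}}(x)$ for $c\in C_K$.

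The implication ``dependent $\Rightarrow$ rank $<n$'' is now immediate: from a nontrivial relation $\sum_j c_jx_j=0$ with $c_j\in C_K$ one applies each $\partial_i$ and uses $C_K$-linearity to obtain $\sum_j c_j\,\partial_i(x_j)=0$ for all $i$, a nontrivial $K$-linear dependence among the columns.

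For the converse I prove, by induction on $n$, that $C_K$-linear independence of $x_1,\dots,x_n$ forces rank $n$; the case $n=1$ holds because $\id=D_{\mathbf{0}}$ is one of the $\partial_i$. Assuming the claim for $n-1$, the elements $x_1,\dots,x_{n-1}$ are independent, so their matrix has rank $n-1$, i.e.\ its columns are $K$-independent. Suppose for contradiction that the full matrix has rank $<n$. Then its last column lies in the span of the first $n-1$, so there are $b_1,\dots,b_{n-1}\in K$ with
\begin{equation*}
D_{\mathbf{k}}(x_n)=\sum_{t=1}^{n-1}b_t\,D_{\mathbf{k}}(x_t)\qquad\text{for all }\mathbf{k}\in[p]^e;
\end{equation*}
call this relation $(\star)$. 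It suffices to show every $b_t\in C_K$, for then the $\mathbf{k}=\mathbf{0}$ instance of $(\star)$ is a $C_K$-dependence among $x_1,\dots,x_n$, a contradiction.

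To prove $D_{\mathbf{e}_l}(b_t)=0$, fix $\mathbf{k}$ and expand $D_{\mathbf{e}_l}D_{\mathbf{k}}(x_n)$ in two ways. The Leibniz rule gives $D_{\mathbf{e}_l}D_{\mathbf{k}}(x_n)=\sum_t D_{\mathbf{e}_l}(b_t)\,D_{\mathbf{k}}(x_t)+\sum_t b_t\,D_{\mathbf{e}_l}D_{\mathbf{k}}(x_t)$. On the other hand, writing $D_{\mathbf{e}_l}D_{\mathbf{k}}$ through Fact \ref{Newton-comp} as a $k$-linear combination of operators $D_{\mathbf{j}'}$ (all of index in $[p]^e$ after the reduction) and applying $(\star)$ to each $D_{\mathbf{j}'}(x_n)$ yields $D_{\mathbf{e}_l}D_{\mathbf{k}}(x_n)=\sum_t b_t\,D_{\mathbf{e}_l}D_{\mathbf{k}}(x_t)$. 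Subtracting, the common sum cancels and leaves $\sum_t D_{\mathbf{e}_l}(b_t)\,D_{\mathbf{k}}(x_t)=0$ for every $\mathbf{k}\in[p]^e$; that is, $(D_{\mathbf{e}_l}(b_t))_t$ is a $K$-linear dependence among the columns of the matrix of $x_1,\dots,x_{n-1}$. By the inductive hypothesis those columns are independent, so $D_{\mathbf{e}_l}(b_t)=0$ for all $t$ and all $l$, i.e.\ $b_t\in C_K$, the desired contradiction. I expect this cancellation to be the crux: it is precisely what forces the reduction to $\mathbb{D}[1]$, so that $(\star)$, a statement only about indices in $[p]^e$, can be applied to every $D_{\mathbf{j}'}$ appearing in the expansion of $D_{\mathbf{e}_l}D_{\mathbf{k}}$, and it is where iterativity (through Fact \ref{Newton-comp}) does the real work; the remaining steps are routine manipulations with the Leibniz rule and with column ranks.
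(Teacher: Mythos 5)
Your proof is correct. The paper itself gives no argument here --- it simply defers to [HK, Proposition 3.20] with the remark that the proof there ``works well'' in this generality --- and what you have written is precisely the classical Wronskian induction that that citation stands for: establish $C_K$-linearity of the $D_{\mathbf{i}}$, get the easy direction by applying each $\partial_i$ to a dependence relation, and for the converse solve for the last column over $K$ and use the two expansions of $D_{\mathbf{e}_l}D_{\mathbf{k}}(x_n)$ to force the coefficients into $C_K$. Your reduction to $\mathbb{D}[1]$ is a clean way to handle the one point where a careless reading of Fact \ref{Newton-comp} could cause trouble, namely that for $k_l=p-1$ the ``leading'' index $\mathbf{k}+\mathbf{e}_l$ falls outside $[p]^e$: after truncation the iterativity diagram still expresses $D_{\mathbf{e}_l}D_{\mathbf{k}}$ as a $k$-linear combination of operators indexed by $[p]^e$ only, which is all the cancellation step needs, and likewise guarantees $i_l\in\{1,\dots,p-1\}$ is invertible in the step showing $D_{\mathbf{i}}(c)=0$ for constants $c$. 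No gaps.
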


\begin{proof}
 The proof of \cite[Proposition 3.20]{HK} works well for the above, more general lemma.
\end{proof}

\begin{cor}\label{lin_disjoint}
 For every $F$-field extension $K\subseteq L$, $K$ and $C_L$ are linearly disjoint over $C_K$.
\end{cor}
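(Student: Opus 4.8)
The plan is to read off the statement from the Wronskian criterion of Lemma \ref{wronskian}, using that linear disjointness is a symmetric notion. Recall first that $C_K\subseteq C_L$: for $x\in C_K\subseteq K$ we have $D'_{(0,\ldots,1,\ldots,0)}(x)=D_{(0,\ldots,1,\ldots,0)}(x)=0$, because the inclusion $K\hookrightarrow L$ is an $F$-morphism. Hence it makes sense to ask for linear disjointness of $K$ and $C_L$ over $C_K$, and by definition this amounts to showing that any $x_1,\ldots,x_n\in K$ which are linearly independent over $C_K$ remain linearly independent over $C_L$.

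First I would fix such a tuple and apply Lemma \ref{wronskian} inside $(K,\mathbb{D})$: linear independence over $C_K$ is equivalent to the matrix $\big(\partial_i(x_j)\big)_{i\le p^e,\,j\le n}$ having rank $n$, where $\partial_1,\ldots,\partial_{p^e}$ are the distinct operators among $\{D_{\mathbf{i}}:i_1,\ldots,i_e<p\}$. The crucial point is that, since $K\hookrightarrow L$ is an $F$-morphism, every $D'_{\mathbf{i}}$ restricts on $K$ to $D_{\mathbf{i}}$; hence for $x_j\in K$ the entries of the Wronskian matrix computed in $(L,\mathbb{D}')$ coincide with those computed in $(K,\mathbb{D})$. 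Thus the two matrices have the same rank $n$, and Lemma \ref{wronskian} applied now in $(L,\mathbb{D}')$ yields that $x_1,\ldots,x_n$ are linearly independent over $C_L$, as desired.

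The one delicate step — and the place I expect to be the main obstacle — is the comparison of the two Wronskian matrices. The operators in both fields are indexed by the same finite set of multi-indices $\mathbf{i}$ with $i_l<p$, but it could happen that $D'_{\mathbf{i}}\neq D'_{\mathbf{i}'}$ on $L$ while $D_{\mathbf{i}}=D_{\mathbf{i}'}$ on $K$. I would handle this by noting that, on $K$-elements, such a coincidence only repeats rows of the $L$-matrix, which does not affect its rank, while conversely every row $\big(\partial_i(x_j)\big)_j$ of the $K$-matrix still appears among the rows of the $L$-matrix; hence both matrices have the same set of row vectors and therefore the same rank.

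Finally, I would probably record a short self-contained proof that sidesteps this bookkeeping altogether, using the symmetric form of disjointness. Take $x_1,\ldots,x_n\in C_L$ independent over $C_K$ and suppose $\sum_j a_jx_j=0$ is a nontrivial $K$-linear relation with the fewest nonzero coefficients, normalized so that some $a_{j_0}=1$. Since each first-order operator $D'_{(0,\ldots,1,\ldots,0)}$ is a derivation vanishing on $C_L$ and restricting to $D_{(0,\ldots,1,\ldots,0)}$ on $K$, applying it gives $\sum_j D_{(0,\ldots,1,\ldots,0)}(a_j)x_j=0$, a strictly shorter relation since the $x_{j_0}$ term drops out; minimality forces all $D_{(0,\ldots,1,\ldots,0)}(a_j)=0$, and ranging over the $e$ coordinates gives $a_j\in C_K$, contradicting independence over $C_K$.
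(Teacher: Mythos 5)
Your main argument is exactly the one the paper intends: the corollary is stated without proof as an immediate consequence of Lemma \ref{wronskian}, via the observation that the Wronskian matrix of elements of $K$ is the same whether computed in $(K,\mathbb{D})$ or in $(L,\mathbb{D}')$, so its rank — and hence linear (in)dependence over the constants — is preserved. Both your handling of the possible coincidence of operators (duplicate rows do not change rank) and your alternative minimal-relation argument are correct, but they do not change the fact that this is essentially the paper's own route.
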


\begin{definition}
 We call an $F$-field $(K,\mathbb{D})$ \emph{strict} if $C_K=K^p$.
\end{definition}

\begin{remark}\label{strict_etale}
 Let $K\subseteq L$ be an $F$-field extension. If $K$ is strict, then $K\subseteq L$ is separable.
\end{remark}

\begin{proof}
 By Corollary \ref{lin_disjoint}, $K$ and $L^p\subseteq C_L$ are linearly disjoint over $K^p=C_K$, so by \cite[Theorem 26.4]{mat}
 $L$ is separable over $K$.
\end{proof}

\begin{lemma}\label{dimconstants}
For any $F$-field $(K,\mathbb{D})$ we have $[K:C_K]\leqslant p^e$.
\end{lemma}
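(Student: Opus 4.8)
The plan is to bound $[K:C_K]$ by exhibiting $p^e$ as the maximal possible size of any $C_K$-linearly independent subset of $K$, using the Wronskian-type criterion from Lemma \ref{wronskian}. Recall that the operators $\partial_1,\ldots,\partial_{p^e}$ are enumerated as the $p^e$ distinct derivations $D_{(i_1,\ldots,i_e)}$ with each $i_l<p$; there are exactly $p^e$ of them since $(i_1,\ldots,i_e)$ ranges over $[p]^e$.

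First I would suppose, toward a contradiction, that $[K:C_K]>p^e$, so that we may pick elements $x_1,\ldots,x_n\in K$ with $n=p^e+1$ that are linearly independent over $C_K$. By Lemma \ref{wronskian}, linear independence over $C_K$ is equivalent to the matrix $\big(\partial_i(x_j)\big)_{i\le p^e,\,j\le n}$ having rank equal to $n$. But this matrix has only $p^e$ rows, so its rank is at most $p^e<n$, which contradicts the rank condition. Hence no $C_K$-linearly independent set can have more than $p^e$ elements, giving $[K:C_K]\le p^e$.

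The key point to verify carefully is that the hypothesis of Lemma \ref{wronskian} genuinely provides \emph{all} of $x_1,\ldots,x_n$ as the columns and that the row index set really has size exactly $p^e$; this is immediate from the definition of the $\partial_i$ as the distinct elements of $\{D_{(i_1,\ldots,i_e)}\mid i_1,\ldots,i_e<p\}$. One should also note that $D_{\mathbf{0}}=\id_K$ is among these operators (corresponding to $\mathbf{i}=\mathbf{0}$), so the relevant derivations are well-defined on all of $K$ and the matrix entries make sense.

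The main (and really the only) obstacle is conceptual rather than computational: recognizing that the finiteness bound is a direct pigeonhole consequence of the Wronskian criterion, namely that a rank-$n$ requirement cannot be met by a matrix with fewer than $n$ rows. Since Lemma \ref{wronskian} has already been established, the argument reduces to this linear-algebra observation, and I would expect the proof to be short.
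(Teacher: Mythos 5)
Your proof is correct and is exactly the argument the paper intends: its proof of Lemma \ref{dimconstants} consists of the single line ``It follows from Lemma \ref{wronskian}'', and your pigeonhole observation (a matrix with only $p^e$ rows cannot have rank $n>p^e$, so by Lemma \ref{wronskian} any $n>p^e$ elements are $C_K$-linearly dependent) is precisely the omitted detail.
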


\begin{proof}
 It follows from Lemma \ref{wronskian}.
\end{proof}

\subsection{Commutative HS-derivations}
In this subsection we deal with formulas for $D_{\mathbf{i}}^{(p)}$ (the $p$-th composition) in the case of
an $F$-derivation $\mathbb{D}=(D_{\mathbf{i}})_{\mathbf{i}\in[p^m]^e}$ for
a commutative $F$ (i.e. $F(\bar{v},\bar{w})=F(\bar{w},\bar{v})$).
The main idea follows Section 3.3. in \cite{HK1}, but improves the reasoning of
\cite[Proposition 3.11]{HK1} and
\cite[Remark 3.12.(4)]{HK1}.
The idea to focus on the ring of symmetric polynomials comes from Piotr Kowalski.
We assume only that $F(\bar{v},\bar{w})\in(k[\bar{v},\bar{w}])^e$ is commutative and that $(R,\mathbb{D})$ is an $F$-ring.
Obviously:

 \begin{fact}\label{commut1}
 We have the following $$D_{\mathbf{j}}\circ D_{\mathbf{i}} = D_{\mathbf{i}}\circ D_{\mathbf{j}}.$$
\end{fact}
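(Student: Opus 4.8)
The plan is to read the claim directly off the commuting square defining $F$-iterativity in Definition \ref{defiterativ}, using nothing beyond the symmetry $F(\bar{v},\bar{w})=F(\bar{w},\bar{v})$ coming from commutativity of $F$. First I would fix an arbitrary $r\in R$ and trace it around the diagram both ways. Travelling along the top edge and then down the right edge gives
$$\mathbb{D}_{\bar{w}}[\bar{v}]\big(\mathbb{D}_{\bar{v}}(r)\big)=\sum_{\mathbf{i},\mathbf{j}}D_{\mathbf{j}}D_{\mathbf{i}}(r)\,\bar{v}^{\mathbf{i}}\bar{w}^{\mathbf{j}},$$
while travelling down the left edge and then applying $\ev_F$ gives
$$\ev_F\big(\mathbb{D}_{\bar{v}}(r)\big)=\sum_{\mathbf{k}}D_{\mathbf{k}}(r)\,F(\bar{v},\bar{w})^{\mathbf{k}}.$$
Commutativity of the diagram identifies these two elements of $R[\bar{v},\bar{w}]$.

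The key step is to observe that the second expression is symmetric under the exchange $\bar{v}\leftrightarrow\bar{w}$: since $F$ is commutative we have $F(\bar{v},\bar{w})=F(\bar{w},\bar{v})$, so each $F(\bar{v},\bar{w})^{\mathbf{k}}$, and hence the whole sum $\sum_{\mathbf{k}}D_{\mathbf{k}}(r)F(\bar{v},\bar{w})^{\mathbf{k}}$, is unchanged by swapping the two tuples of variables. By the equality of the two expressions, the first sum must be symmetric as well, so that
$$\sum_{\mathbf{i},\mathbf{j}}D_{\mathbf{j}}D_{\mathbf{i}}(r)\,\bar{v}^{\mathbf{i}}\bar{w}^{\mathbf{j}}=\sum_{\mathbf{i},\mathbf{j}}D_{\mathbf{j}}D_{\mathbf{i}}(r)\,\bar{w}^{\mathbf{i}}\bar{v}^{\mathbf{j}}.$$
I would then compare the coefficients of a fixed monomial $\bar{v}^{\mathbf{a}}\bar{w}^{\mathbf{b}}$ on both sides: on the left it is $D_{\mathbf{b}}D_{\mathbf{a}}(r)$, and on the right (reading off $\mathbf{j}=\mathbf{a}$, $\mathbf{i}=\mathbf{b}$) it is $D_{\mathbf{a}}D_{\mathbf{b}}(r)$. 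As $r\in R$ was arbitrary, this yields $D_{\mathbf{a}}D_{\mathbf{b}}=D_{\mathbf{b}}D_{\mathbf{a}}$ for all $\mathbf{a},\mathbf{b}$, which is the assertion with $\mathbf{a}=\mathbf{i}$ and $\mathbf{b}=\mathbf{j}$.

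There is no genuine obstacle here beyond careful bookkeeping: the only thing to watch is the pairing of each index with its variable when reading off coefficients, so as not to inadvertently compare $D_{\mathbf{b}}D_{\mathbf{a}}$ with itself. It is worth noting that no structure of $F$ beyond commutativity enters the argument — neither associativity, nor the group axioms, nor even the normalisation $F(\bar{v},\bar{0})=\bar{v}$ is used; commutativity of $F$ alone forces the operators $D_{\mathbf{i}}$ to commute.
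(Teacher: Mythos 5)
Your proof is correct, and it is exactly the intended argument: the paper states this fact without proof (prefacing it only with ``Obviously''), the justification being precisely that both paths around the iterativity square agree, the $\ev_F$ path is visibly symmetric under $\bar{v}\leftrightarrow\bar{w}$ when $F$ is commutative, and comparing coefficients of $\bar{v}^{\mathbf{a}}\bar{w}^{\mathbf{b}}$ then forces $D_{\mathbf{a}}D_{\mathbf{b}}=D_{\mathbf{b}}D_{\mathbf{a}}$. Your closing observation that only commutativity of $F$ (together with $F$-iterativity itself) is used is also accurate.
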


For every $N\ge 1$ we introduce the following $k$-algebra homomorphism
$$E_N: R[\bar{v}_1,\ldots,\bar{v}_{N-1}]\to R[\bar{v}_1,\ldots,\bar{v}_N],$$
$$E_N=\mathbb{D}_{\bar{v}_N}[\bar{v}_1,\ldots,\bar{v}_{N-1}],$$
where $\bar{v}_1,\ldots,\bar{v}_N$ are $e$-tuples of $m$-truncated variables and
$$\mathbb{D}_{\bar{v}_N}[\bar{v}_1,\ldots,\bar{v}_{N-1}]\Big(\sum\limits_{\mathbf{i}_1,\ldots,\mathbf{i}_{N-1}}\alpha_{\mathbf{i}_1,\ldots,\mathbf{i}_{N-1}}
\bar{v}_1^{\mathbf{i}_1}\cdot\ldots\cdot \bar{v}_{N-1}^{\mathbf{i}_{N-1}}\Big)=$$
$$=\sum\limits_{\mathbf{i}_1,\ldots,\mathbf{i}_{N}}D_{\mathbf{i}_N}(\alpha_{\mathbf{i}_1,\ldots,\mathbf{i}_{N-1}})
\bar{v}_1^{\mathbf{i}_1}\cdot\ldots\cdot \bar{v}_{N}^{\mathbf{i}_{N}}.$$
 For $N\ge 1$ we define inductively
 $$F_1(\bar{v}_1):=\bar{v}_1,$$
 $$F_{N+1}(\bar{v}_1,\ldots,\bar{v}_{N+1}):=F_N\big(\bar{v}_1,\ldots,\bar{v}_{N-1},F(\bar{v}_N,\bar{v}_{N+1})\big).$$

 \begin{lemma}\label{wzor00}
 For every $N\ge 1$ the following diagram commutes
 \begin{equation*}
   \xymatrixcolsep{2.5pc}\xymatrixrowsep{1.5pc}\xymatrix{R \ar[rr]^{E_N\circ\ldots\circ E_1} \ar[dr]_{E_1}& & R[\bar{v}_1,\ldots,\bar{v}_{N}]
   \\ & R[\bar{v}_1] \ar[ur]_{\ev_{F_N}}&}
 \end{equation*}
\end{lemma}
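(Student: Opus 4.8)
The plan is to induct on $N$. The base case $N=1$ is immediate: since $F_1(\bar{v}_1)=\bar{v}_1$, the map $\ev_{F_1}$ is the identity on $R[\bar{v}_1]$, and the asserted commutativity reads $E_1=\ev_{F_1}\circ E_1$, which holds trivially. For the inductive step I would \emph{not} try to peel off the outermost map $E_{N+1}$ and apply the hypothesis to $E_N\circ\cdots\circ E_1$ directly; as explained below, that forces an associativity of $F$ that is not assumed. Instead I would isolate a one-step identity that modifies the two \emph{innermost} maps.

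Concretely, let $\sigma_N\colon R[\bar{v}_1,\dots,\bar{v}_N]\to R[\bar{v}_1,\dots,\bar{v}_{N+1}]$ be the $k$-algebra homomorphism fixing $R$ and $\bar{v}_1,\dots,\bar{v}_{N-1}$ and sending $\bar{v}_N\mapsto F(\bar{v}_N,\bar{v}_{N+1})$. The key claim I would prove first is
\[
E_{N+1}\circ E_N=\sigma_N\circ E_N
\]
as maps $R[\bar{v}_1,\dots,\bar{v}_{N-1}]\to R[\bar{v}_1,\dots,\bar{v}_{N+1}]$. Both sides are additive, so it suffices to compare them on a single term $\alpha\,\bar{v}_1^{\mathbf{i}_1}\cdots\bar{v}_{N-1}^{\mathbf{i}_{N-1}}$ with $\alpha\in R$. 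Applying $E_N$ and then $E_{N+1}$ (which acts only on the $R$-coefficients, fixing $\bar{v}_1,\dots,\bar{v}_N$) yields $\bar{v}_1^{\mathbf{i}_1}\cdots\bar{v}_{N-1}^{\mathbf{i}_{N-1}}\sum_{\mathbf{i}_N,\mathbf{i}_{N+1}}D_{\mathbf{i}_{N+1}}\big(D_{\mathbf{i}_N}(\alpha)\big)\,\bar{v}_N^{\mathbf{i}_N}\bar{v}_{N+1}^{\mathbf{i}_{N+1}}$. Here the $F$-iterativity of $\mathbb{D}$ (Definition \ref{defiterativ}), read off in the variables $\bar{v}_N,\bar{v}_{N+1}$, is exactly the identity
\[
\sum_{\mathbf{i}_N,\mathbf{i}_{N+1}}D_{\mathbf{i}_{N+1}}\big(D_{\mathbf{i}_N}(\alpha)\big)\,\bar{v}_N^{\mathbf{i}_N}\bar{v}_{N+1}^{\mathbf{i}_{N+1}}=\sum_{\mathbf{i}_N}D_{\mathbf{i}_N}(\alpha)\,F(\bar{v}_N,\bar{v}_{N+1})^{\mathbf{i}_N},
\]
and the right-hand side is precisely $\sigma_N$ applied to $\sum_{\mathbf{i}_N}D_{\mathbf{i}_N}(\alpha)\,\bar{v}_N^{\mathbf{i}_N}=E_N(\alpha)$. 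This establishes the one-step identity.

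With this in hand the induction closes. Using the one-step identity on the two innermost maps and then the inductive hypothesis $E_N\circ\cdots\circ E_1=\ev_{F_N}\circ E_1$ gives
\[
E_{N+1}\circ E_N\circ\cdots\circ E_1=\sigma_N\circ(E_N\circ\cdots\circ E_1)=\sigma_N\circ\ev_{F_N}\circ E_1.
\]
It then remains to recognize $\sigma_N\circ\ev_{F_N}=\ev_{F_{N+1}}$: the composite substitutes $\bar{v}_1\mapsto F_N(\bar{v}_1,\dots,\bar{v}_N)$ and then $\bar{v}_N\mapsto F(\bar{v}_N,\bar{v}_{N+1})$, so its net effect on $\bar{v}_1$ is $F_N\big(\bar{v}_1,\dots,\bar{v}_{N-1},F(\bar{v}_N,\bar{v}_{N+1})\big)=F_{N+1}(\bar{v}_1,\dots,\bar{v}_{N+1})$ by the recursive definition of $F_{N+1}$. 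Hence $E_{N+1}\circ\cdots\circ E_1=\ev_{F_{N+1}}\circ E_1$, completing the induction.

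The step I expect to be the main obstacle is the alignment between the composition order of the $E_i$ and the recursion defining $F_N$. If one instead rewrites $E_{N+1}\circ(E_N\circ\cdots\circ E_1)$ as $E_{N+1}\circ\ev_{F_N}\circ E_1$ and pushes $E_{N+1}$ inward, one is led to compare $F\big(F_N(\bar{v}_1,\dots,\bar{v}_N),\bar{v}_{N+1}\big)$ with $F_{N+1}$, and these agree only when $F$ is associative, which is not among our standing hypotheses. The whole point is therefore to apply iterativity to the innermost pair $E_{N+1}\circ E_N$, so that the resulting substitution $\sigma_N$ touches exactly the last slot $\bar{v}_N$ of $F_N$, matching the recursion for $F_{N+1}$. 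The only delicate bookkeeping is tracking which variable gets substituted and remembering that $E_{N+1}$ acts solely on $R$-coefficients; notably, no use of the commutativity of $F$ is required, only the iterativity diagram.
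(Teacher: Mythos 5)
Your proof is correct and follows essentially the same route as the paper: induction on $N$, with your key one-step identity $E_{N+1}\circ E_N=\sigma_N\circ E_N$ being precisely the right-hand square of the paper's inductive diagram (obtained, as you do, by applying the iterativity diagram coefficientwise in the variables $\bar{v}_N,\bar{v}_{N+1}$), followed by the same observation that $\sigma_N\circ\ev_{F_N}=\ev_{F_{N+1}}$ via the recursive definition of $F_{N+1}$. Your closing remark that only iterativity --- not associativity or commutativity of $F$ --- is needed is consistent with the paper's argument, which likewise substitutes only into the last slot of $F_N$.
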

 
\begin{proof} 
It is clear for $N=1$, so assume for the induction step that the last diagram is commutative. Consider
 \begin{equation*}
   \xymatrixcolsep{6.5pc}\xymatrixrowsep{1.5pc}\xymatrix{R \ar[r]^{E_{N-1}\circ\ldots\circ E_1} \ar[d]_{E_1}  & 
   R[\bar{v}_1,\ldots,\bar{v}_{N-1}] \ar[d]_{E_N} \ar[r]^{E_{N}} &R[\bar{v}_1,\ldots,\bar{v}_{N}] \ar[d]^{E_{N+1}}\\
   R[\bar{v}_1] \ar[r]_{\ev_{F_N}} & R[\bar{v}_1,\ldots,\bar{v}_{N}] \ar[r]_{\ev_{(\bar{v}_1,\ldots,\bar{v}_{N-1},F(\bar{v}_N,\bar{v}_{N+1})}}
   & R[\bar{v}_1,\ldots,\bar{v}_{N+1}]}
 \end{equation*}
Left part is commutative by the inductive assumption. For commutativity of the right side, just apply the functor
$$R\to R[\bar{v}_1,\ldots,\bar{v}_{N-1}]$$
to the diagram from the $F$-iterativity definition and change $\bar{v}$, $\bar{w}$ to $\bar{v}_{N}$, $\bar{v}_{N+1}$.
Finally
$$\ev_{\big(\bar{v}_1,\ldots,\bar{v}_{N-1},F(\bar{v}_N,\bar{v}_{N+1})\big)}\circ\ev_{F_N}=\ev_{F_{N+1}}.$$
\end{proof}
Note that the following composition of mappings 
$$R\xrightarrow{E_1} R[\bar{v}_1]\xrightarrow{E_2} R[\bar{v}_1,\bar{v}_2]\to\ldots\xrightarrow{E_p} R[\bar{v}_1,\ldots,\bar{v}_p]$$
%\xrightarrow{\ev_{(\bar{v},\ldots,\bar{v})}} R[\bar{v}]$$
is a $k$-algebra homomorphism such that $\im (E_p\circ\ldots\circ E_1)$ is,
by Fact \ref{commut1}, a subset of the ring of symmetric polynomials in $\bar{v}_1,\ldots,\bar{v}_p$,
i.e.: elements of $R[\bar{v}_1,\ldots,\bar{v}_p]$ invariant under the action of $S_p$,
$$\sigma: \bar{v}_i=(v_{i,1},\ldots,v_{i,e})\mapsto \bar{v}_{\sigma(i)}=(v_{\sigma(i),1},\ldots,v_{\sigma(i),e}),$$
for $\sigma\in S_p$ and $i\le p$. 
In other words the map $E_p\circ\ldots\circ E_1$ factors as in following the diagram
\begin{equation*}
   \xymatrixcolsep{6.5pc}\xymatrixrowsep{1.5pc}\xymatrix
   {R \ar[dr]_{E_{p}\circ\ldots\circ E_1} \ar@{-->}[r]  & R[\bar{v}_1,\ldots,\bar{v}_{p}]^{S_p} \ar[d]^{\subseteq} \\
    & R[\bar{v}_1,\ldots,\bar{v}_{p}]}
\end{equation*}
For $\varphi:  R[\bar{v}_1,\ldots,\bar{v}_{p}]^{S_p} \to R[\bar{v}_1^{\frac{1}{p}}]$, given by
by $v_{i,j}\mapsto v_{1,j}^{\frac{1}{p}}$, where $i\le p$ and $j\le e$, 
also the map $\varphi$ factors as in the following diagram
\begin{equation*}
   \xymatrixcolsep{6.5pc}\xymatrixrowsep{1.5pc}\xymatrix
   {R[\bar{v}_1,\ldots,\bar{v}_{p}]^{S_p}  \ar[dr]_{\varphi}   \ar@{-->}[r]  & R[\bar{v}_1]  \ar[d]^{\subseteq} \\
    & R[\bar{v}_1^{\frac{1}{p}}]}
\end{equation*}
Therefore $\varphi: \im (E_p\circ\ldots\circ E_1) \to R[\bar{v}_1]$, 
defined 
is a well-defined $k$-algebra homomorphism.

For any $N\ge 1$ we define inductively the ``multiplication by $N$ map'':
$$[1]_F:=\bar{v},$$
$$[N+1]_F:=F(\bar{v},[N]_F).$$
For example $[2]_F=F(\bar{v},\bar{v})$.

\begin{cor}\label{evP}
 For any $r\in R$ we have
 $$\sum\limits_{\mathbf{i}}D_{\mathbf{i}}^{(p)}(r)\bar{v}^{\mathbf{i}}=
 \ev_{[p]_F(\bar{v}^{1/p})}\big(\sum\limits_{\mathbf{i}}D_{\mathbf{i}}(r)\bar{v}^{\mathbf{i}}\big) .$$
\end{cor}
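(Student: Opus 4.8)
The plan is to evaluate the single $k$-algebra homomorphism $\varphi\circ E_p\circ\ldots\circ E_1\colon R\to R[\bar{v}_1]$ on an arbitrary $r\in R$ in two different ways: one computation will produce the right-hand side of the claim and the other the left-hand side, so that comparing them gives the identity. Throughout I will use that $\varphi$ sends $v_{k,j}\mapsto v_{1,j}^{1/p}$, i.e. it substitutes every $\bar{v}_k$ by $\bar{v}_1^{1/p}$, and that by the factorization diagrams preceding the statement this $\varphi$ is well defined on $\im(E_p\circ\ldots\circ E_1)$ with values in $R[\bar{v}_1]$.

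First I would run the computation through Lemma \ref{wzor00}. That lemma gives $E_p\circ\ldots\circ E_1=\ev_{F_p}\circ E_1$, hence
\[
\varphi\big(E_p\circ\ldots\circ E_1(r)\big)=\varphi\Big(\ev_{F_p}\big(\textstyle\sum_{\mathbf{i}}D_{\mathbf{i}}(r)\bar{v}_1^{\mathbf{i}}\big)\Big)=\sum_{\mathbf{i}}D_{\mathbf{i}}(r)\,\varphi\big(F_p(\bar{v}_1,\ldots,\bar{v}_p)\big)^{\mathbf{i}}.
\]
Since $\varphi$ replaces each $\bar{v}_k$ by $\bar{v}_1^{1/p}$, we get $\varphi(F_p(\bar{v}_1,\ldots,\bar{v}_p))=F_p(\bar{v}_1^{1/p},\ldots,\bar{v}_1^{1/p})$, and because $F$ is commutative and associative the iterated law on a constant tuple is exactly the multiplication-by-$p$ map, $F_p(\bar{u},\ldots,\bar{u})=[p]_F(\bar{u})$. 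Taking $\bar{u}=\bar{v}_1^{1/p}$ rewrites the display as $\ev_{[p]_F(\bar{v}^{1/p})}(\sum_{\mathbf{i}}D_{\mathbf{i}}(r)\bar{v}^{\mathbf{i}})$, which is the right-hand side of the corollary.

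Next I would expand the same element directly. Writing out the iterated $E_N$'s yields
\[
E_p\circ\ldots\circ E_1(r)=\sum_{\mathbf{i}_1,\ldots,\mathbf{i}_p}\big(D_{\mathbf{i}_p}\circ\ldots\circ D_{\mathbf{i}_1}\big)(r)\,\bar{v}_1^{\mathbf{i}_1}\cdots\bar{v}_p^{\mathbf{i}_p},
\]
and after applying $\varphi$ the coefficient of $\bar{v}_1^{\mathbf{i}}$ is the sum of $D_{\mathbf{i}_p}\circ\ldots\circ D_{\mathbf{i}_1}(r)$ over all ordered tuples with $\mathbf{i}_1+\ldots+\mathbf{i}_p=p\mathbf{i}$. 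As $F$ is commutative the operators $D_{\mathbf{i}}$ pairwise commute, so each such composition depends only on the multiset $\{\mathbf{i}_1,\ldots,\mathbf{i}_p\}$, and I would group the tuples accordingly. The crux, and the step I expect to be the main obstacle, is that every multiset other than the constant one $\{\mathbf{i},\ldots,\mathbf{i}\}$ contributes $0$: a multiset with multiplicities $m_{\mathbf{j}}$ (where $\sum_{\mathbf{j}}m_{\mathbf{j}}=p$) admits $p!/\prod_{\mathbf{j}}m_{\mathbf{j}}!$ orderings, and since $p$ is prime this integer is divisible by $p$ unless some $m_{\mathbf{j}}=p$, that is unless the multiset is constant. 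Because all orderings yield the same operator by commutativity, in characteristic $p$ a non-constant multiset contributes a multiple of $p$ and hence vanishes, whereas the constant multiset (the unique one with $\mathbf{i}_1=\ldots=\mathbf{i}_p=\mathbf{i}$) contributes precisely $D_{\mathbf{i}}^{(p)}(r)$. This same divisibility also disposes of the terms whose exponent sum is not divisible by $p$, so the apparent fractional-exponent contributions cancel, consistent with $\varphi$ landing in $R[\bar{v}_1]$. Thus $\varphi(E_p\circ\ldots\circ E_1(r))=\sum_{\mathbf{i}}D_{\mathbf{i}}^{(p)}(r)\bar{v}^{\mathbf{i}}$, and equating this with the first computation would establish the corollary.
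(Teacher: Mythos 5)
Your proposal is correct and takes essentially the same route as the paper: both evaluate the homomorphism $\varphi\circ E_p\circ\ldots\circ E_1$ on $r$ in two ways, obtaining the right-hand side from Lemma \ref{wzor00} together with the definitions of $F_p$, $[p]_F$ and $\varphi$. The only difference is that you spell out the combinatorial step identifying $\varphi\circ E_p\circ\ldots\circ E_1(r)$ with $\sum_{\mathbf{i}}D_{\mathbf{i}}^{(p)}(r)\bar{v}^{\mathbf{i}}$ (commutativity of the $D_{\mathbf{i}}$ plus divisibility of the multinomial coefficients $p!/\prod_{\mathbf{j}}m_{\mathbf{j}}!$ by $p$ for non-constant multisets), a step the paper delegates to the citation of \cite[Lemma 3.7]{HK1}.
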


\begin{proof}
 By Lemma \ref{wzor00} we know that
 $$E_p\circ\ldots\circ E_1(r)=\ev_{F_p}\circ E_1(r),$$
 so
 $$\sum\limits_{\mathbf{i}}D_{\mathbf{i}}^{(p)}(r)\bar{v}_1^{\mathbf{i}}=\varphi\circ E_p\circ\ldots\circ E_1(r)=
 \varphi\circ \ev_{F_p}\circ E_1(r)=
 \ev_{[p]_F(\bar{v}_1^{1/p})}\big(\sum\limits_{\mathbf{i}}D_{\mathbf{i}}(r)\bar{v}_1^{\mathbf{i}}\big).$$
 The first equality is similar to \cite[Lemma 3.7]{HK1}, the last follows from definitions of $[p]_F$, $F_p$ and $\varphi$.
 For example let $p=2$:
 $$\varphi\circ\ev_{F_2(\bar{v}_1,\bar{v}_2)}=\ev_{F_2(\bar{v}_1^{1/p},\bar{v}_1^{1/p})}=\ev_{[2]_F(\bar{v}_1^{1/p})}.$$
\end{proof}

\section{Canonical $G$-bases and the Integrability}\label{secsetup}
The results of this section focus on proving the integrability for a field equipped with an iterative HS-derivation and endowed
with a $p$-basis of a special kind.
For the notion of $p$-independence, $p$-basis and their basic properties, the reader is referred to \cite[p. 202.]{mat}.
Recall that $k$ is a perfect field. Assume that $G$ is an algebraic group over $k$ of dimension $e$ (perhaps not commutative). We will write
$G[m]$-derivation, $G[m]$-ring, $G[m]$-field, $\ldots$ instead of $\hat{G}[m]$-derivation, $\hat{G}[m]$-ring, $\hat{G}[m]$-field, $\ldots$
\
\\
\\
Let $(K,\mathbb{D})$ be a $G[m]$-field.
For every $s\in\lbrace 0,\ldots,m-1\rbrace$ we introduce
\begin{equation*}
 F_s:=\bigcap\limits_{j=0}^{s}C_{(p^j,0,\ldots,0)}\cap C_{(0,p^j,0,\ldots,0)}\cap\ldots\cap C_{(0,\ldots,0,p^j)},\quad F_{-1}:=K.
\end{equation*}
\begin{remark}\label{Fs}
Sets $F_s$ are, due to Fact \ref{Dpp-gen}, subfields of $K$. In fact $F_s$ is equal to 
the field of
constants of order $s$ (the absolute constants of $\mathbb{D}[s+1]$).
\end{remark}

For the clarity of the following proofs, we note an obvious fact:
\begin{fact}\label{kombajn}
 Let $L\subseteq L'$ be an extension of fields.
 If $y\in L^{\frac{1}{p}}\backslash L\subseteq L'$,
 then $[L(y):L]=p$.
\end{fact}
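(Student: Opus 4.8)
The plan is to write down the minimal polynomial of $y$ over $L$ explicitly and read off its degree. Since $y\in L^{1/p}$, the element $a:=y^p$ lies in $L$, so $y$ is a root of $X^p-a\in L[X]$. As the characteristic is $p$, Frobenius yields the factorisation
\[
X^p-a=X^p-y^p=(X-y)^p
\]
in $L^{1/p}[X]$, and a fortiori in $L'[X]\supseteq L[X]$.

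Next I would pin down the minimal polynomial $m(X)\in L[X]$ of $y$ over $L$. It must divide $X^p-a=(X-y)^p$. Since $L'[X]$ is a unique factorisation domain and the linear factor $X-y$ is irreducible, the only monic divisors of $(X-y)^p$ are the powers $(X-y)^d$ with $0\le d\le p$. Because $m(X)$ is monic of positive degree, I conclude $m(X)=(X-y)^d$ for some $1\le d\le p$.

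Finally I would force $d=p$ by inspecting a single coefficient. The coefficient of $X^{d-1}$ in $(X-y)^d$ equals $-dy$, and it lies in $L$ because $m(X)\in L[X]$. If $d<p$, then $d$ is a nonzero, hence invertible, element of the field $L$, so dividing gives $y=-(dy)/d\in L$, contradicting the hypothesis $y\notin L$. Therefore $d=p$, and $[L(y):L]=\deg m(X)=p$, as claimed.

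There is no real obstacle here; the fact is elementary. The only points needing a little care are that the minimal polynomial is genuinely a \emph{power} of the linear factor $X-y$ (which rests on unique factorisation in the polynomial ring), and that the coefficient $-dy\in L$ forces $y\in L$ unless $p\mid d$, which is exactly what excludes every $d$ in the range $1\le d\le p-1$.
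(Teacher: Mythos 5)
Your proof is correct. The paper states this fact without any proof (it is introduced as ``an obvious fact''), and your argument --- $y$ is a root of $X^p-a=(X-y)^p$, the minimal polynomial is $(X-y)^d$ by unique factorisation, and the coefficient $-dy\in L$ rules out $1\le d\le p-1$ --- is exactly the standard justification the author presumably has in mind, so there is nothing to compare against and nothing to add.
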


\begin{lemma}\label{claim1}
 Let $z_1,\ldots,z_e\in K$ form a $p$-basis of (or equivalently, by Lemma \ref{dimconstants}, ``are $p$-independent in'') $K$ over $C_K=F_0$.
 For every $s\in\lbrace 0,\ldots,m-1\rbrace$ we have
 $$[F_{s-1}:F_s]=p^e,\qquad F_{s-1}=F_s(z_1^{p^s},\ldots,z_e^{p^s}).$$
\end{lemma}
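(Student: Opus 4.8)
The plan is to sandwich $[F_{s-1}:F_s]$ between $p^e$ from both sides and read off the generators along the way. First I record the elementary containments: by the Frobenius formula $D_{\mathbf i}(x^{p})=D_{\mathbf i/p}(x)^{p}$ (which vanishes unless $p\mid \mathbf i$), iterated $s$ times, one gets $K^{p^{s}}\subseteq F_{s-1}$ and $K^{p^{s+1}}\subseteq F_{s}$; in particular each $z_l^{p^{s}}$ lies in $F_{s-1}$ while $(z_l^{p^{s}})^{p}=z_l^{p^{s+1}}\in F_{s}$. Hence $L:=F_s(z_1^{p^{s}},\dots,z_e^{p^{s}})$ is purely inseparable over $F_s$ and sits inside $F_{s-1}$, so by Fact \ref{kombajn} (applied $e$ times) $[L:F_s]=p^{d}$ for some $d\le e$. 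The base case $s=0$ is then immediate: there $F_{-1}=K$, $F_0=C_K$, the $z_l$ are $p$-independent over $C_K$, and Lemma \ref{dimconstants} forces $[K:C_K]=p^{e}$, whence $K=C_K(z_1,\dots,z_e)$, which is the asserted formula at $s=0$.

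Next I prove the lower bound $[F_{t-1}:F_t]\ge p^{e}$ for every $t\le s$, together with the $p$-independence of $z_1^{p^{t}},\dots,z_e^{p^{t}}$ over $F_t$, by transporting Lemma \ref{wronskian} through Frobenius. Since the monomials $z^{\mathbf a}$ $(\mathbf a\in[p]^{e})$ are linearly independent over $C_K=F_0$, the matrix $A=\big(D_{\mathbf c}(z^{\mathbf a})\big)_{\mathbf c,\mathbf a\in[p]^{e}}$ has nonzero determinant by Lemma \ref{wronskian}. Raising every entry to the $p^{t}$-th power raises the determinant to the $p^{t}$-th power (the signature lies in $\Ff_p$), so $\big(D_{\mathbf c}(z^{\mathbf a})^{p^{t}}\big)$ is again invertible, and by the Frobenius formula its entries are exactly $D_{p^{t}\mathbf c}(z^{\mathbf a p^{t}})$. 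Now if $\sum_{\mathbf a}c_{\mathbf a}z^{\mathbf a p^{t}}=0$ with $c_{\mathbf a}\in F_t$, then applying $D_{p^{t}\mathbf c}$ and expanding by the Leibniz rule collapses the sum: using Remark \ref{Fs}, which identifies $F_t$ with the absolute constants of $\mathbb D[t+1]$ (so $D_{p^{t}\mathbf c'}(c_{\mathbf a})=0$ for $\mathbf c'\neq\mathbf 0$), and again the Frobenius formula, only the term $c_{\mathbf a}D_{p^{t}\mathbf c}(z^{\mathbf a p^{t}})$ survives. Thus $\sum_{\mathbf a}c_{\mathbf a}D_{p^{t}\mathbf c}(z^{\mathbf a p^{t}})=0$ for all $\mathbf c$, and invertibility of the matrix forces all $c_{\mathbf a}=0$. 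Hence the $p^{e}$ monomials $z^{\mathbf a p^{t}}$ are $F_t$-linearly independent, giving $[F_{t-1}:F_t]\ge p^{e}$; taking $t=s$ yields $d=e$ and $[L:F_s]=p^{e}$.

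Finally I need the matching upper bound, and this is the main obstacle. The cleanest route is global: by Remark \ref{Fs}, $F_s$ is the field of absolute constants of the truncation $\mathbb D[s+1]$, whose truncation algebra $K[\bar v]$ is free of rank $(p^{s+1})^{e}$ over $K$; the Wronskian mechanism behind Lemma \ref{wronskian}, applied to $\mathbb D[s+1]$ in place of the first level of $\mathbb D$, should yield $[K:F_s]\le p^{e(s+1)}$. Combining this with the tower $K=F_{-1}\supseteq F_0\supseteq\dots\supseteq F_s$ and the lower bounds just proved,
\[ p^{e(s+1)}\ \ge\ [K:F_s]\ =\ \prod_{t=0}^{s}[F_{t-1}:F_t]\ \ge\ (p^{e})^{s+1}\ =\ p^{e(s+1)}, \]
so every factor equals $p^{e}$; in particular $[F_{s-1}:F_s]=p^{e}=[L:F_s]$ with $L\subseteq F_{s-1}$, whence $F_{s-1}=L=F_s(z_1^{p^{s}},\dots,z_e^{p^{s}})$. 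The delicate point is exactly the inequality $[K:F_s]\le p^{e(s+1)}$: Lemmas \ref{wronskian} and \ref{dimconstants} are stated only for the order-zero constants $C_K$ and the first-level operators $D_{\mathbf i}$ $(\mathbf i\in[p]^{e})$, so I must verify that their argument extends verbatim to the higher truncation $\mathbb D[s+1]$ and its absolute constants. A more self-contained alternative is to equip $F_{s-1}$ with the induced family $\big(D_{p^{s}\mathbf a}|_{F_{s-1}}\big)_{\mathbf a}$, show it is an iterative HS-derivation for the $p^{s}$-Frobenius twist of $\hat G$ (available since $k$ is perfect) with ring of constants $F_s$, and then invoke Lemma \ref{dimconstants} directly to get $[F_{s-1}:F_s]\le p^{e}$; there the subtle step is the well-definedness $D_{p^{s}\mathbf a}(F_{s-1})\subseteq F_{s-1}$, which must be extracted from the full iterativity diagram of Definition \ref{defiterativ} rather than from Fact \ref{Newton-comp} alone.
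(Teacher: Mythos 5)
Your lower bound is correct and is obtained by a genuinely different, more hands-on route than the paper's: you twist the Wronskian matrix of Lemma \ref{wronskian} by Frobenius and collapse the Leibniz expansion using Remark \ref{Fs}, whereas the paper views $K^{p^s}\subseteq F_{s-1}$ as an extension of $G[1]^{\fr^{p^s}}$-fields and transports the $F_0^{p^s}$-linear independence of the monomials to $F_s$-linear independence via the linear disjointness of Corollary \ref{lin_disjoint}. Both arguments are sound; yours is more explicit and avoids Corollary \ref{lin_disjoint}, at the cost of redoing the Wronskian computation by hand.

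The genuine gap is the upper bound $[F_{s-1}:F_s]\le p^e$, which you yourself flag as "the main obstacle" and leave unproved in both of your proposed routes. Route (a) needs a Wronskian criterion for linear dependence over the \emph{absolute} constants of $\mathbb{D}[s+1]$, which is not what Lemma \ref{wronskian} states (it only treats $C_K$ and the operators $D_{\mathbf i}$ with $\mathbf i\in[p]^e$); route (b) needs that the operators $D_{(p^s i_1,\ldots,p^s i_e)}$ preserve $F_{s-1}$ \emph{and} that their restrictions form an iterative HS-derivation for the Frobenius twist $\hat G[1]^{\fr^{p^s}}$ with constant field $F_s$, so that Lemma \ref{dimconstants} applies. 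Neither verification is routine from Fact \ref{Newton-comp} alone, and without one of them the chain $[F_s(z_1^{p^s},\ldots,z_e^{p^s}):F_s]\le[F_{s-1}:F_s]\le p^e$ is not established, so the identification $F_{s-1}=F_s(z_1^{p^s},\ldots,z_e^{p^s})$ does not follow. This missing step is precisely the content of Lemma 3.31 of \cite{HK}, which the paper's proof invokes (twice) to justify both the upper bound and the hypothesis of Corollary \ref{lin_disjoint}; you have correctly located where the work lies, but as written your proposal does not carry it out, so it is an incomplete proof rather than a wrong one.
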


\begin{proof}
  Being a $p$-basis for $K$ over $F_0$, due to $K^p\subseteq F_0$, means that $K=F_0(z_1,\ldots,z_e)$ and that $[F_0(z_1,\ldots,z_e):F_0]=p^e$.
Notice that, by Lemma 3.31 from \cite{HK} and Lemma \ref{dimconstants},
$$[F_s(z_1^{p^s},\ldots,z_e^{p^s}):F_s]\le [F_{s-1}:F_s]\le p^e.$$ 
It is enough to show that $[F_s(z_1^{p^s},\ldots,z_e^{p^s}):F_s]=p^e$. 
We know that $\lbrace z_1^{i_1}\cdot\ldots\cdot z_e^{i_e}\;\;|\;\; 0\le i_1,\ldots, i_e<p\rbrace$ is $F_0$-linearly independent, thus
$\lbrace z_1^{i_1p^s}\cdot\ldots\cdot z_e^{i_ep^s}\;\;|\;\; 0\le i_1,\ldots, i_e<p\rbrace$ is $F_0^{p^s}$-linearly independent.
Consider
$$(K^{p^s}, \mathbb{D}[s+1]|_{K^{p^s}})\subseteq (F_{s-1},\mathbb{D}[s+1]|_{F_{s-1}}).$$
By \cite[Lemma 3.31]{HK}, it is an extension of $G[1]^{\fr^{p^s}}$-fields. Therefore, by Corollary \ref{lin_disjoint},
$K^{p^s}$ is linearly disjoint from constants of $\mathbb{D}[s+1]|_{F_{s-1}}$ over constants of $\mathbb{D}[s+1]|_{K^{p^s}}$.
So $F_0^{p^s}$-linear independence of $\lbrace z_1^{i_1p^s}\cdot\ldots\cdot z_e^{i_ep^s}\;\;|\;\; 0\le i_1,\ldots, i_e<p\rbrace$
implies it $F_s$-linear independence. Hence $[F_s(z_1^{p^s},\ldots,z_e^{p^s}):F_s]=p^e$.
\end{proof}

\begin{remark}\label{remclaim1}
 The equality
 $$[F_{s-1}:F_s]=p^e,$$
 where $s\in\lbrace 0,\ldots,m-1\rbrace$, does not depend on the choice of a $p$-basis. Therefore it is true if $[K:C_K]=p^e$.
\end{remark}

\begin{prop}\label{lemat.27.3.ii}
 Let $z_1,\ldots,z_e\in K$ form a $p$-basis of (or equivalently ``are $p$-independent in'') $K$ over $C_K=F_0$.
 Then there exists a subset $\mathcal{B}_{0}\subseteq C_K^{\text{abs}}=F_{m-1}$,
 for which $\mathcal{B}:=\mathcal{B}_{0}\cup\lbrace z_1,\ldots,z_e\rbrace$
 is a $p$-basis for $K$ over $k$.
\end{prop}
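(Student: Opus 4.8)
The plan is to produce the missing $p$-basis elements inside $F_{m-1}$ by working relative to the field $P:=k(K^p)(z_1,\ldots,z_e)$, the $p$-closure of $\{z_1,\ldots,z_e\}$, and exhibiting a $p$-basis of $K$ over $P$ that already lies in $F_{m-1}$. Two structural facts drive the argument. First, since $\mathbb{D}$ is a derivation over $k$ we have $k\subseteq F_{m-1}$, and since $D_{\mathbf{i}}(x^p)=0$ for $\mathbf{i}\neq\mathbf{0}$ we have $K^p\subseteq C_K=F_0$; hence $K^p\subseteq P$, so $K/P$ is purely inseparable of exponent $\le 1$ (every element of $K$ has its $p$-th power in $P$). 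Second, from Lemma \ref{claim1} we have $F_{s-1}=F_s(z_1^{p^s},\ldots,z_e^{p^s})\subseteq F_s(z_1,\ldots,z_e)$ for each $s$, whence $F_{s-1}(z_1,\ldots,z_e)\subseteq F_s(z_1,\ldots,z_e)$; iterating from $s=0$ up to $s=m-1$ gives
\[ K=F_{-1}(z_1,\ldots,z_e)\subseteq F_{m-1}(z_1,\ldots,z_e)\subseteq K, \qquad\text{so}\qquad K=F_{m-1}(z_1,\ldots,z_e). \]

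Next I would verify that $\{z_1,\ldots,z_e\}$ is $p$-independent over $k$: by hypothesis the monomials $z_1^{i_1}\cdots z_e^{i_e}$ with $0\le i_l<p$ are linearly independent over $F_0$, and since $k(K^p)\subseteq F_0$ any $k(K^p)$-linear relation is an $F_0$-linear relation, so they stay linearly independent over $k(K^p)$. Because $z_i^p\in K^p\subseteq k(K^p)$, this yields $[P:k(K^p)]=p^e$. Then I would extract $\mathcal{B}_0$: by Zorn's lemma choose $\mathcal{B}_0\subseteq F_{m-1}$ maximal among subsets of $F_{m-1}$ that are $p$-independent over $P$. I claim $P(\mathcal{B}_0)=K$. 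If not, then $F_{m-1}\not\subseteq P(\mathcal{B}_0)$ (otherwise $K=F_{m-1}(z_1,\ldots,z_e)\subseteq P(\mathcal{B}_0)$, as $z_i\in P$), so pick $x\in F_{m-1}\setminus P(\mathcal{B}_0)$. Since $x^p\in F_{m-1}^p\subseteq K^p\subseteq P$, we get $[P(\mathcal{B}_0)(x):P(\mathcal{B}_0)]=p$, so $\mathcal{B}_0\cup\{x\}$ is still $p$-independent over $P$, contradicting maximality. Hence $\mathcal{B}_0$ is a $p$-basis of $K$ over $P$.

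Finally I would assemble $\mathcal{B}:=\{z_1,\ldots,z_e\}\cup\mathcal{B}_0$. Generation is immediate: $k(K^p)(\mathcal{B})=P(\mathcal{B}_0)=K$. For $p$-independence over $k$ I count degrees on an arbitrary finite subset $\mathcal{B}_0'\subseteq\mathcal{B}_0$ (this suffices, $p$-independence being finitary):
\[ [\,k(K^p)(z_1,\ldots,z_e,\mathcal{B}_0'):k(K^p)\,]=[\,k(K^p)(\mathcal{B}_0',z_\bullet):P\,]\cdot[P:k(K^p)]=p^{|\mathcal{B}_0'|}\cdot p^{e}=p^{\,e+|\mathcal{B}_0'|}, \]
which equals the number of $p$-monomials built from $\{z_1,\ldots,z_e\}\cup\mathcal{B}_0'$; a spanning family of cardinality equal to $\dim_{k(K^p)}$ of the field it spans must be a basis, so those monomials are linearly independent. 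Thus $\mathcal{B}$ is $p$-independent and generating over $k$, i.e. a $p$-basis, with $\mathcal{B}_0\subseteq F_{m-1}$, as required (cf.\ the $p$-basis theory in \cite{mat}).

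The step I expect to be the genuine obstacle is the \emph{choice} of $\mathcal{B}_0$. The naive guess---take a $p$-basis of $F_{m-1}$ over $k$---is wrong: such elements need not remain $p$-independent once the base $k(F_{m-1}^p)$ is enlarged to $k(K^p)$. This already fails in the strict case $C_K=K^p$, where $\{z_1,\ldots,z_e\}$ is itself a $p$-basis of $K$ over $k$ and $\mathcal{B}_0$ must be empty, whereas a $p$-basis of $F_{m-1}/k$ would be nonempty and $p$-dependent with the $z_i$. Passing to $P$ and using the identity $K=F_{m-1}(z_1,\ldots,z_e)$ to force $F_{m-1}$ alone to generate the purely inseparable layer $K/P$ is precisely what repairs this defect.
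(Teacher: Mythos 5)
Your argument is correct, and it takes a genuinely different route from the paper. The paper also starts from Lemma \ref{claim1}, but it chooses $\mathcal{B}_0$ as the complement of $\lbrace z_1^{p^m},\ldots,z_e^{p^m}\rbrace$ inside a $p$-basis of $F_{m-1}$ over $k$ containing that set, and then runs a descending induction along the tower $F_{m-1}\subseteq\cdots\subseteq F_0\subseteq K$, verifying at each level $s$ (by explicit degree computations over $F_{s-2}^p$) that $\mathcal{B}_0\cup\lbrace z_1^{p^{s-1}},\ldots,z_e^{p^{s-1}}\rbrace$ is a $p$-basis of $F_{s-2}$ over $k$. You instead compress the whole tower into the single identity $K=F_{m-1}(z_1,\ldots,z_e)$ (an immediate iteration of Lemma \ref{claim1}), work relative to the intermediate field $P=k(K^p)(z_1,\ldots,z_e)$, extract $\mathcal{B}_0\subseteq F_{m-1}$ by Zorn's lemma as a maximal $p$-independent set over $P$, and finish with one tower-law degree count. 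Your version is shorter and isolates the essential point --- that $F_{m-1}$ generates the exponent-one layer $K/P$ --- while the paper's version yields as a by-product a $p$-basis of every intermediate field $F_s$ over $k$. One small caveat on your closing remark: the ``naive guess'' you rightly reject (take an arbitrary $p$-basis of $F_{m-1}/k$ as $\mathcal{B}_0$) is not quite what the paper does; the paper first forces $\lbrace z_1^{p^m},\ldots,z_e^{p^m}\rbrace$ into the $p$-basis of $F_{m-1}$ and discards exactly those $e$ elements, and its induction shows that this choice does work (in your strict-case example $F_{m-1}=K^{p^m}$ and the paper's $\mathcal{B}_0$ is empty, as it must be). Both proofs are valid; yours trades the inductive bookkeeping for a Zorn-type maximality argument.
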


\begin{proof}
In particular, Lemma \ref{claim1} implies that the set $\lbrace z^{p^m}_{1},\ldots
z^{p^m}_{e}\rbrace$ is $p$-independent over $k$ in $F_{m-1}$.
Let $\mathcal{B}'$ be
a $p$-basis $\mathcal{B}^{\prime}$ of the field $F_{m-1}$ over
$k$ of the form $\mathcal{B}'=\mathcal{B}_0\cupdot\lbrace z_1^{p^m},\ldots,z_e^{p^m}\rbrace$.
We will show that $\mathcal{B}:=\mathcal{B}_0\cup\lbrace z_1,\ldots,z_e\rbrace$ is a $p$-basis of $K$ over $k$.
Since for $s=m$, $\mathcal{B}_0\cup\lbrace z^{p^m}_1,\ldots,z^{p^m}_e\rbrace=\mathcal{B}^{\prime}$ is,
as above, a $p$-basis for $F_{m-1}$ over $k$, it is enough to show the following induction step
\begin{center}
 if $\mathcal{B}_0\cup\lbrace z^{p^s}_1,\ldots,z^{p^s}_e\rbrace$ is $p$-basis for $F_{s-1}$ over $k$, 
\\ then
 $\mathcal{B}_0\cup\lbrace z^{p^{s-1}}_1,\ldots,z^{p^{s-1}}_e\rbrace$ is $p$-basis for $F_{s-2}$ over $k$,
\end{center}
where $s$ descends from $m$ to $1$. 
\
\\
Firstly, we argue for the $p$-independence of $\mathcal{B}_0$ in $F_{s-2}$.
For any $n\in\mathbb{N}$ and pairwise distinct $x_1,\ldots,x_n\in\mathcal{B}_0$, by Lemma \ref{claim1}, we have
\begin{equation*}
 [F_{s-2}^p (x_1,\ldots,x_n):F_{s-1}^p ]=
 [F_{s-1}^p (z^{p^s}_1,\ldots,z^{p^s}_e,x_1,\ldots,x_n):F_{s-1}^p ]=p^{n+e},
\end{equation*}
using Lemma \ref{claim1} again, we get
\begin{IEEEeqnarray}{rCl}
p^{n+e} &=& [F_{s-2}^p (x_1,\ldots,x_n):F_{s-2}^p ]\cdot[F_{s-1}^p (z^{p^s}_1,\ldots,z^{p^s}_e):F_{s-1}^p ]\nonumber \\
&=& [F_{s-2}^p (x_1,\ldots,x_n):F_{s-2}^p ]\cdot p^e.\nonumber
\end{IEEEeqnarray}
Elements $x_1,\ldots,x_n$ were choosen arbitrary, so indeed $\mathcal{B}_0$ is $p$-independent over $ k$
in $F_{s-2}$. 
\newline
We will show now the $p$-independence of $\mathcal{B}_0\cup\lbrace z_1^{p^{s-1}},\ldots,z_e^{p^{s-1}}\rbrace$ in $F_{s-2}$.
For any $n\in\mathbb{N}$, pairwise distinct $x_1,\ldots,x_n\in\mathcal{B}_0$,
\begin{equation*}
 [F_{s-2}^p (z^{p^{s-1}}_1,\ldots,z^{p^{s-1}}_e,x_1,\ldots,x_n):F_{s-2}^p ]=
\end{equation*}
\begin{equation*}
=[F_{s-2}^p (z^{p^{s-1}}_1,\ldots,z^{p^{s-1}}_e,x_1,\ldots,x_n):
F_{s-2}^p (x_1,\ldots,x_n)]\cdot
[F_{s-2}^p (x_1,\ldots,x_n):F_{s-2}^p ].
\end{equation*}
To show the $p$-independence of $\mathcal{B}_0\cup\lbrace z^{p^{s-1}}_1,\ldots,
z^{p^{s-1}}_e\rbrace$ over $ k$ in $F_{s-2}$
we need only to prove
\begin{equation*}
 [F_{s-2}^p (z^{p^{s-1}}_1,\ldots,z^{p^{s-1}}_e,x_1,\ldots,x_n):
F_{s-2}^p (x_1,\ldots,x_n)]=p^e.
\end{equation*}
By Fact \ref{kombajn} it reduces to show that for each $i\le e$ we have
$$z_i^{p^{s-1}}\not\in F_{s-2}^p (z^{p^{s-1}}_{i+1},\ldots,z^{p^{s-1}}_e,x_1,\ldots,x_n)$$
(clearly $z^{p^s}_i\in F_{s-2}^p$).
It holds due to Lemma \ref{claim1} and $F_{s-2}^p (z^{p^{s-1}}_{i+1},\ldots,z^{p^{s-1}}_e,x_1,\ldots,x_n)
\subseteq F_{s-1}(z_{i+1}^{p^{s-1}},\ldots,z_e^{p^{s-1}})$.
\newline
Finally, we see that
\begin{IEEEeqnarray*}{rCl}
 F_{s-2} &=& F_{s-1}(z^{p^{s-1}}_1,\ldots,z^{p^{s-1}}_e) \\
         &=& F_{s-1}^p (\mathcal{B}_0\cup\lbrace z^{p^s}_1,\ldots,z^{p^s}_e\rbrace)(z^{p^{s-1}}_1,\ldots,z^{p^{s-1}}) \\
         &=& F_{s-1}^p(z^{p^s}_1,\ldots,z^{p^s}_e)(\mathcal{B}_0\cup\lbrace z^{p^{s-1}}_1,\ldots,z^{p^{s-1}} \rbrace) \\
         &=& F_{s-2}^p(\mathcal{B}_0\cup\lbrace z^{p^{s-1}}_1,\ldots,z^{p^{s-1}} \rbrace),
\end{IEEEeqnarray*}

and that ends proof of the induction, after last step we obtain that
$\mathcal{B}_0\cup\lbrace z_1,\ldots z_e\rbrace$ is a $p$-basis for $F_{1-2}=K$ over $ k$.
\end{proof}

\noindent
In the spirit of \cite[definition 6.1]{HK} we introduce the following term:

\begin{definition}\label{defbasis}
 Let $(K,\mathbb{D}[m])$ be a $G[m]$-field. A subset $B\subseteq K$ is called a \emph{canonical $G$-basis} if
 \begin{itemize}
  \item $|B|=e$,
  \item $B$ is $p$-independent in $K$ over $C_K$,
  \item there is an embedding of $G[m]$-fields $(k(G),\mathbb{D}^G[m])\to (K,\mathbb{D}[m])$ (see Example \ref{exDG})
 such that $B$ is the image of the set of canonical parameters of $G$ corresponding to the canonical $G$-derivation.
 \end{itemize}
\end{definition}

\begin{example}\label{ZieglerEx}
 Let us take $G=\mathbb{G}_a^e$. By \cite[Theorem 27.3]{mat} and Lemma \ref{product} if $[K:C_K]=p^e$ then $(K,\mathbb{D}[m])$
 has a canonical $\mathbb{G}_a^e$-basis. This fact was used in \cite{Zieg3}, to obtain the quantifier elimination for the
 theory of separably closed strict $\mathbb{G}_a^e$-fields, satisfying $[K:C_K]=p^e$.
\end{example}

\begin{theorem}\label{integrable}
 Assume that a $G[m]$-field $(K,\mathbb{D}[m])$ has a canonical $G$-basis, then $\mathbb{D}[m]$ is strongly integrable.
\end{theorem}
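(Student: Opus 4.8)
The plan is to use the embedding supplied by the canonical $G$-basis to realise $(K,\mathbb{D}[m])$ as the $m$-truncation of a \emph{non-truncated} canonical $G$-derivation defined over a larger field of constants; strong integrability then comes for free, since canonical $G$-derivations are by construction full $\hat G$-iterative HS-derivations. Unwinding Definition \ref{defbasis}, the canonical $G$-basis gives an embedding of $G[m]$-fields $\iota\colon(k(G),\mathbb{D}^G[m])\to(K,\mathbb{D}[m])$; write $L:=\iota(k(G))$ and $b_i:=\iota(x_i)$, so that $B=\{b_1,\ldots,b_e\}$ and $L$ carries the full $\hat G$-derivation $\mathbb{E}$ obtained by transporting $\mathbb{D}^G$ along $\iota$, with $\mathbb{E}[m]=\mathbb{D}[m]|_L$. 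Since $B$ is $p$-independent over $C_K$ and $|B|=e$, Lemma \ref{dimconstants} forces $[K:C_K]=p^e$ and makes $B$ a $p$-basis, so Lemma \ref{claim1} gives $K=F_{m-1}(b_1,\ldots,b_e)$ with each $b_i^{p^m}\in F_{m-1}=C_K^{\mathrm{ab}}$ and $[K:F_{m-1}]=p^{em}$. Proposition \ref{lemat.27.3.ii} produces $\mathcal{B}_0\subseteq F_{m-1}$ with $\mathcal{B}_0\cup B$ a $p$-basis of $K$ over $k$, and (from its proof) $\mathcal{B}_0\cup\{b_1^{p^m},\ldots,b_e^{p^m}\}$ a $p$-basis of $F_{m-1}$ over $k$.

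The core construction is to split this last $p$-basis. I would produce a subfield $k'\subseteq F_{m-1}$ that admits $\mathcal{B}_0$ as a $p$-basis and has $b_1^{p^m},\ldots,b_e^{p^m}$ as a separating transcendence basis, so that $F_{m-1}=k'(b_1^{p^m},\ldots,b_e^{p^m})$. Because $k$ is perfect and $G$ is a smooth connected algebraic group, $k(G)/k$ is a regular extension, hence $L$ is linearly disjoint from every field extension of $k$; together with $F_{m-1}=k'(b^{p^m})$ this identifies $K=k'\cdot L$ with $\operatorname{Frac}(k'\otimes_k k(G))=k'(G_{k'})$, the function field of the base change $G_{k'}=G\times_k k'$, under $x_i\leftrightarrow b_i$. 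I then set $\mathbb{D}'$ equal to the canonical $\hat G$-derivation $\mathbb{D}^{G_{k'}}$ of Example \ref{exDG} over $k'$; equivalently, $\mathbb{D}'$ is $\mathbb{E}$ extended by declaring all of $k'$ to be absolute constants. By construction $\mathbb{D}'$ is a genuine non-truncated $\hat G$-derivation on $K$.

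It remains to check $\mathbb{D}'[m]=\mathbb{D}[m]$, which is now essentially formal. Both are $\hat G[m]$-derivations on $K=F_{m-1}(b_1,\ldots,b_e)$, both fix $F_{m-1}$ pointwise in the sense of killing it under every $D_{\mathbf{i}}$, $\mathbf{i}\neq\mathbf{0}$ (for $\mathbb{D}[m]$ this is the definition of $C_K^{\mathrm{ab}}$; for $\mathbb{D}'[m]$ the order-$(m-1)$ constants of $\mathbb{D}'$ equal $k'(b^{p^m})=F_{m-1}$ by Lemma \ref{claim1} applied to $\mathbb{D}'$), and both send each $b_i$ to the same series $\hat G_i(\bar b,\bar v)$ truncated at level $m$ (for $\mathbb{D}[m]$ via $\iota$ and the definition of $\mathbb{D}^G$, for $\mathbb{D}'[m]$ by the definition of the canonical derivation over $k'$ with the same formal group law $\hat G$). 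As $k$-algebra homomorphisms out of the field $K$ that agree on the generating set $F_{m-1}\cup\{b_1,\ldots,b_e\}$, they coincide (one may also invoke Fact \ref{Dpp-gen}). Hence $\mathbb{D}'[m]=\mathbb{D}[m]$, and $\mathbb{D}[m]$ is strongly integrable.

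The main obstacle is the splitting in the second paragraph: exhibiting a field of constants $k'\subseteq F_{m-1}$ over which $\{b_i^{p^m}\}$ is a separating transcendence basis and $\mathcal{B}_0$ a $p$-basis, i.e.\ realising $K$ as a base change $k'(G_{k'})$ of the canonical model compatibly with the given truncated derivation. This is a purely field-theoretic point about separating the two halves of the $p$-basis of $F_{m-1}$ furnished by Proposition \ref{lemat.27.3.ii}, and it is precisely where the $p$-independence built into the notion of canonical $G$-basis is used; once it is in place, the identification of the two derivations is immediate.
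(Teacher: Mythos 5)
Your overall strategy is the paper's: transport the canonical $\hat G$-derivation onto a copy of $k(G)$ suitably tensored with a field of absolute constants, and then argue that its $m$-truncation must agree with $\mathbb{D}[m]$. But the step you yourself flag as ``the main obstacle'' is a genuine gap, and in fact the statement you need there is false in general. You require a subfield $k'\subseteq F_{m-1}$ with $\mathcal{B}_0$ a $p$-basis of $k'$ and $F_{m-1}=k'(b_1^{p^m},\ldots,b_e^{p^m})$ purely transcendental, so that $K$ \emph{equals} $k'(G_{k'})$. No such $k'$ need exist: already for $e=1$ and $\mathcal{B}_0=\emptyset$, the field $F_{m-1}$ can be the function field of a positive-genus curve (hence not of the form $k'(t)$ for any $k'$), or a non-finitely-generated field such as a Laurent series field, which has a one-element $p$-basis $\{t\}$ but is not even algebraic over $\mathbb{F}_p(t)$. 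A $p$-basis over a perfect field is algebraically independent, but the field is in general neither generated by it nor algebraic over the subfield it generates; so neither the ``equality'' reading nor the weaker ``separating transcendence basis'' reading of your construction goes through. Consequently your $\mathbb{D}'$ is only defined on a proper subfield of $K$, and your final ``agree on a generating set'' argument has nothing to act on.

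The paper avoids this by never claiming $K=K'(G)$. It sets $K':=k(\mathcal{B}_0)$, shows via Matsumura's Theorem 26.8 that $K'(B)\subseteq K$ is $0$-\'etale (formally \'etale, not algebraic), deduces that $k(G)\otimes_k K'\to K$ is $0$-\'etale, puts the derivation $D'_{\mathbf{i}}(v\otimes w)=D^G_{\mathbf{i}}(v)\otimes w$ on the tensor product, and then invokes Lemma \ref{smooth2}: an $F$-derivation lifts \emph{uniquely} along a formally \'etale map. Existence of the lift gives the untruncated $\tilde{\mathbb{D}}$ on $K$, and uniqueness (the unramified half of Lemma \ref{smooth2}) forces $\tilde{\mathbb{D}}[m]=\mathbb{D}[m]$, since both restrict to $\mathbb{D}'[m]$ on $k(G)\otimes_k K'$. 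That \'etale-lifting step is exactly the ingredient missing from your argument; if you replace your demand for an exact splitting of $F_{m-1}$ by the $0$-\'etaleness of $K$ over $K'(G)$ and cite Lemma \ref{smooth2} for both existence and uniqueness, your proof becomes the paper's.
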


\begin{proof}
Let $B=\lbrace z_1,\ldots,z_e\rbrace$ be a canonical $G$-basis of $(K,\mathbb{D}[m])$ and
let $\bar{X}$ be an $e$-tuple of variables. By a choice of local parameters of $G$ at the identity we get an embedding $k(\bar{X})\subseteq k(G)$.
Proposition \ref{lemat.27.3.ii} assures the existence of a set $B_0\subseteq C_K^{\text{abs}}$
such that $B_0\cup B$ is a $p$-basis of $K$ over $k$. Let $K':=k(B_0)$.
Because $B_0\cup B$ is algebraically independent over $k$, $B_0$ is algebraically independent over $k(B)$. Moreover $k(B)\cong k(\bar{X})\subset k(G)$
is an algebraic extension, thus $B_0$ is algebraically independent over $k(G)$. Therefore $K'$ and $k(G)$ are linearly disjoint over $k$,
so the ``multiplication'' map $\mu: k(G)\otimes_k K' \to K$ is an injection, and therefore it extends to
$\tilde{\mu}:(k(G)\otimes_k K')_0 \to K$.

By \cite[Theorem 26.8]{mat}, $K'(B)$ is purely transcendental over
$K'$ and $K'(B)\subseteq K$ is $0$-\'{e}tale. 
Hence we have $K'(B)\cong K'(\bar{X})\cong (k(\bar{X})\otimes_k K')_0\subseteq (k(G)\otimes_k K')_0$
($k(G)\otimes_k K'$ is a domain as a subring of $K$). Therefore we have a natural mapping $K'(B)\to(k(G)\otimes_k K')_0=:K'(G)$.

Note that the following diagram commutes
\begin{equation*}
 \xymatrixcolsep{2.5pc}\xymatrixrowsep{1.5pc}\xymatrix{& K'(G) \ar[dr]^{\tilde{\mu}}& 
\\ K'(B)  \ar[ur] \ar[rr]_{\subseteq}
 &  & K}
\end{equation*} 
The extension of fields $K'(B)\subseteq K$ is smooth, and by \cite[Theorem 26.9]{mat} it is also separable. In particular, $K'(G)$
is separable over $K'(B)$. The algebraicity of the extension $k(B)\subseteq k(G)$ implies the algebraicity of the extension
$K'(B)\subseteq K'(G)$, and that,
due to \cite[Theorem 26.1]{mat}, means that $K'(B)\subseteq K'(G)$ is $0$-\'{e}tale. Therefore also 
$\tilde{\mu}: K'(G) \to K$ is $0$-\'{e}tale.
We have the following tower of $k$-algebras
$$k(G)\otimes_k K' \subseteq (k(G)\otimes_k K')_0 =K'(G)\xrightarrow{\tilde{\mu}} K,$$
where both extensions are $0$-\'{e}tale. By \cite[Theorem 26.7]{mat} $k(G)\otimes_k K' \xrightarrow{\mu} K$ is $0$-\'{e}tale.
\
\\
Now we are going to define a $G$-ring structure on $R:=k(G)\otimes_k K'$. For every $\mathbf{i}\in\mathbb{N}^e$, $v\in k(G)$ and $w\in K'$ 
we define
$$D_{\mathbf{i}}'(v\otimes w):=D_{\mathbf{i}}^G(v)\otimes w.$$
Note that for every $\mathbf{i}\in [p^m]^e$, we have $\mu\circ D_{\mathbf{i}}'=D_{\mathbf{i}}\circ\mu$.
Thus $(R,\mathbb{D}'[m])\xrightarrow{\mu} (K,\mathbb{D})$
is a $G[m]$-morphism. By Lemma \ref{smooth2} for $(R,\mathbb{D})$, there exists a unique $G$-derivation $\tilde{\mathbb{D}}$ on $K$.
Since both $\mathbb{D}$ and $\tilde{\mathbb{D}}[m]$ extend $\mathbb{D}'[m]$, by Lemma \ref{smooth2} for every $\mathbf{i}\in[p^m]^e$ 
we have $D_{\mathbf{i}}=\tilde{D}_{\mathbf{i}}$.
\end{proof}

\begin{remark}\label{remintg}
 The converse  to Theorem \ref{integrable} is not true in general. 
 First of all, we need enough ``space'' to have a canonical $G$-basis, so we assumme 
 \begin{equation}\label{degree}
  [K:C_K]=p^e
 \end{equation}
 for an integrable $G[m]$-field $(K,\mathbb{D})$.
 For example $D_{\mathbf{i}}=0$, for all $\mathbf{i}\neq\mathbf{0}$, is integrable, but $[K:C_K]=1$
 and there are not enough $p$-independent elements to form a $G$-basis. Equality (\ref{degree}) in dimension $e=1$ means
 that $D_1\neq 0$ and such an assumption is needed in \cite[Theorem 27.3.(ii)]{mat} to obtain the existence of a one-element
 canonical basis. Hence it is morally justified to assume (\ref{degree}) in the next Section,
 where we find a canonical $G$-basis for a special algebraic group $G$.
 Perhaps there are no general reasons for the converse theorem to hold and finding a canonical $G$-basis is the only possibility
 for proving the existence of such a basis
 for a given algebraic group $G$.
\end{remark}

\section{New examples of groups with canonical $G$-bases}\label{secnewex}

\subsection{Unipotent groups of dimension two}\label{subnewex1}
In this subsection, we are going to find a canonical $G$-basis for an algebraic group $G$ of a special type. Firstly we
provide a well known fact about derivations, 
then define $G$ and its group law. After this we specify which tuples satisfy the canonical $G$-basis condition in this case and prove the existence
of such basis for a $G[m]$-field $(K,\mathbb{D})$ satisfying $[K:C_K]=p^e$.

\begin{fact}\label{ZM}
 Let $L$ be a field, $\partial\in\Der_C(L)$, $\ker\partial=C\neq L$, $\partial^{(p)}=0$, then
 \begin{itemize}
  \item[i)] there exists an element $z\in L$ such that
  $\partial(z)=1$, and $1,z,z^2,\ldots,z^{p-1}$ form a basis of $L$ over $C$,
  \item[ii)] $\ker\partial^{(p-1)}=\im\partial$.
 \end{itemize}
\end{fact}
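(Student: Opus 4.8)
The plan is to first produce the element $z$, then use it as a ``coordinate'' in which $\partial$ behaves like the formal derivative $d/dz$, and read off both parts from the resulting $C$-basis.

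\textbf{Existence of $z$ with $\partial(z)=1$.} Since $C=\ker\partial\neq L$, there is $a\in L$ with $\partial(a)\neq 0$. Because $\partial^{(p)}=0$, the order $n:=\max\{k\geq 1 : \partial^{(k)}(a)\neq 0\}$ satisfies $1\le n\le p-1$. Put $b:=\partial^{(n-1)}(a)$; then $c:=\partial(b)=\partial^{(n)}(a)\neq 0$ while $\partial(c)=\partial^{(n+1)}(a)=0$, so $c\in C\setminus\{0\}$. As $c$ is a constant, $z:=b/c$ satisfies $\partial(z)=\partial(b)/c=1$. Note also $\partial(z^p)=pz^{p-1}=0$, so $z^p\in C$ and hence $[C(z):C]\le p$.

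\textbf{The basis.} First I would record that $\partial^{(j)}(z^i)=\tfrac{i!}{(i-j)!}z^{i-j}$ for $i\ge j$ (and $0$ otherwise). For linear independence, suppose $\sum_{i=0}^{p-1}c_i z^i=0$ with $c_i\in C$; applying $\partial^{(p-1)},\partial^{(p-2)},\dots$ in turn and using that $j!\not\equiv 0\pmod p$ for $0\le j\le p-1$ forces $c_{p-1}=c_{p-2}=\cdots=0$. Spanning is the main obstacle, and I would settle it by an explicit ``truncated Taylor'' formula: for $u\in L$ set $b_l:=\sum_{j=0}^{p-1}\tfrac{(-z)^j}{j!}\,\partial^{(j+l)}(u)$. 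Using $\partial^{(p)}=0$ to kill the boundary term one checks $\partial(b_l)=0$, so each $b_l\in C$; then grouping the double sum defining $\sum_{l}\tfrac{b_l}{l!}z^l$ by $n=j+l$ and invoking $\sum_{j}\binom{n}{j}(-1)^j=0$ for $n\ge 1$ yields $u=\sum_{l=0}^{p-1}\tfrac{b_l}{l!}z^l$. Thus $1,z,\dots,z^{p-1}$ spans $L$ over $C$, so it is a $C$-basis, $L=C(z)$, and $[L:C]=p$.

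\textbf{Part ii).} The inclusion $\im\partial\subseteq\ker\partial^{(p-1)}$ is immediate, since $\partial^{(p-1)}\partial=\partial^{(p)}=0$. For the reverse I would use the basis: write $u=\sum_{l=0}^{p-1}a_l z^l$ with $a_l\in C$; then $\partial^{(p-1)}(u)=a_{p-1}(p-1)!$, so $u\in\ker\partial^{(p-1)}$ iff $a_{p-1}=0$. In that case $u=\partial\big(\sum_{l=0}^{p-2}\tfrac{a_l}{l+1}z^{l+1}\big)$, because $l+1\not\equiv 0\pmod p$ for $0\le l\le p-2$ and the $a_l$ are constants, whence $u\in\im\partial$. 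Hence $\ker\partial^{(p-1)}=\im\partial$. The only subtle point throughout is the spanning step; once the explicit formula is in hand, both items are routine bookkeeping with factorials modulo $p$.
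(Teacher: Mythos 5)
Your proof is correct and complete. For part i) the paper simply cites \cite[Theorem 27.3]{mat}, whereas you supply the whole argument: the standard normalisation trick (take $a$ with $\partial(a)\neq 0$, pass to $b=\partial^{(n-1)}(a)$ at the top nonvanishing order and divide by the constant $\partial(b)$) produces $z$ with $\partial(z)=1$, and your truncated Taylor expansion $u=\sum_{l=0}^{p-1}\tfrac{b_l}{l!}z^l$ with $b_l=\sum_{j=0}^{p-1}\tfrac{(-z)^j}{j!}\partial^{(j+l)}(u)$ is exactly the right device for spanning; I checked that $\partial(b_l)=\tfrac{(-z)^{p-1}}{(p-1)!}\partial^{(p+l)}(u)=0$ as you claim, and that grouping by $n=j+l$ and using $\sum_j\binom{n}{j}(-1)^j=0$ recovers $u$. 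For part ii) you and the paper diverge: the paper argues by dimension count over $C$ (from the basis, $\dim_C\im\partial=p-1$, while $\ker\partial^{(p-1)}\subsetneq L$ forces $\dim_C\ker\partial^{(p-1)}\le p-1$, and the inclusion $\im\partial\subseteq\ker\partial^{(p-1)}$ then yields equality), whereas you exhibit an explicit antiderivative $\sum_{l\le p-2}\tfrac{a_l}{l+1}z^{l+1}$ of any element with vanishing top coefficient. The dimension count is shorter once the basis is in hand; your version is constructive and makes transparent exactly which elements lie in $\im\partial$, which is in the spirit of how the fact is later used (e.g.\ in Fact \ref{y1} one actually needs a $z$ with $D_{(p^l,0)}(y)=D_{(p^l,0)}(z)$). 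Either way, no gap.
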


\begin{proof}
 The first item is contained in \cite[Theorem 27.3]{mat}. The second item is in \cite[Lemma 3.]{Zieg2}, 
 but for reader's convenience, we include a short proof. The derivation $\partial$ is a $C$-linear map, after computing $\partial$
 on $1,z,z^2,\ldots,z^{p-1}$ we see that $\dim_C\ker\partial=1$. Therefore $\dim_C\im\partial=p-1$, moreover $\dim_C\ker\partial^{(p-1)}\le p-1$.
 The condition $\partial^{(p)}=0$ implies that $\im\partial\subseteq\ker\partial^{(p-1)}$, but $\dim_C\ker\partial^{(p-1)}\le \dim_C\im\partial$.
\end{proof}

\noindent
For $i\le p$ let $\lambda_i:=\frac{(p-1)!}{(p-i)!i!}\mod p$, which is equal to
the image of $\frac{1}{p}{p \choose i}$ in $\mathbb{F}_p$.
Following the page 171. from \cite{serre1988algebraic}, we define
$$H_n(X_2,Y_2):=\left[\frac{1}{p}\bigg( (X_2+Y_2)^p-X_2^p-Y_2^p\bigg)\right]^{p^n}=$$ $$=\frac{1}{p}\bigg( (X_2^{p^n}+Y_2^{p^n})^p - X_2^{p^{n+1}} 
- Y_2^{p^{n+1}}\bigg)\in\mathbb{F}_p[X_2,Y_2],$$
$$H_n(X_2,Y_2)=\sum\limits_{i=1}^{p-1}\lambda_i X_2^{i p^n} Y_2^{(p-i) p^n}.$$
Consider the  extension of commutative algebraic groups
$$0\to \mathbb{G}_a \to G \to \mathbb{G}_a \to 0,$$
where the group operation $*$ on $G$ is given by
$$(X_1,X_2)*(Y_1,Y_2)=F(X_1,X_2,Y_1,Y_2):=(X_1+Y_1+  \sum\limits_{n=0}^{M}\alpha_n H_n(X_2,Y_2), X_2+Y_2),$$
for a fixed $M\in\mathbb{N}$ and $\alpha_i\in k$ for $i\le M$.
We are interested in the following $m$-truncation
$$F[m](v_1,v_2,w_1,w_2)=(v_1+w_1+  \sum\limits_{n=0}^{N}\alpha_n H_n(v_2,w_2), v_2+w_2),$$
where $v_1$, $v_2$, $w_1$ and $w_2$ are $m$-truncated variables and $N:=\min\lbrace M,m-1\rbrace$.
Without loss of generality we will assume that $N=m-1$.
Let $(K,\mathbb{D})$ be a $G[m]$-field
(i.e. $(K,\mathbb{D})$ is a $\hat{G}[m]$-field,
but $\hat{G}=\hat{F}=F$, so we consider just an $F[m]$-field),
such that $[K:C_K]=p^2$.
%We assume that $\alpha_N\neq 0$.
%It is not hard to prove the fact below.

\begin{lemma}\label{U-simple-comp}
We have the following
\begin{itemize}
 \item[i)] $D_{(i,j)}=D_{(i,0)}D_{(0,j)}=D_{(0,j)}D_{(i,0)}$,
 \item[ii)]$D_{(i_2,0)}\circ D_{(i_1,0)}={i_1+i_2\choose i_1}D_{(i_1+i_2,0)}$.
%\begin{fact}\label{U-p-comp}
 \item[iii)] $[p]_{F[m]}(v_1,v_2)=(-\sum\limits_{n=0}^{N}\alpha_n v_2^{p^{n+1}},0)$,
 \item[iv)] $D_{(i,j)}^{(p)}=0$ for every $i,j\le p^{m-1}$ such that $i\neq 0$,
 \item[v)] if $j<m$ then $$D_{(0,p^j)}^{(p)}=-\alpha_{j}D_{(1,0)}+\sum\limits_{n=p}^{p^j}\beta_n D_{(n,0)},$$
 for some $\beta_n\in k$.
\end{itemize}
\end{lemma}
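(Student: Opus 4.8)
This lemma (Lemma \ref{U-simple-comp}) establishes computational facts about a specific 2-dimensional unipotent group $G$ defined by the group law

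$$F(X_1,X_2,Y_1,Y_2) = \left(X_1 + Y_1 + \sum_{n=0}^{M}\alpha_n H_n(X_2,Y_2),\ X_2+Y_2\right),$$

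where $H_n(X_2,Y_2) = \sum_{i=1}^{p-1}\lambda_i X_2^{ip^n}Y_2^{(p-i)p^n}$.

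Let me work through each part to plan a proof.

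**Parts (i) and (ii): Commutator/composition structure**

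Part (i) claims $D_{(i,j)} = D_{(i,0)}D_{(0,j)} = D_{(0,j)}D_{(i,0)}$.

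For part (i), I'd use Fact \ref{Newton-comp} together with the specific structure of $F$. Since the second coordinate of $F$ is just $X_2+Y_2$ (pure $\mathbb{G}_a$), and the first coordinate contains the coupling term. I would examine the iterativity diagram directly: the iterativity relation says $D_{\mathbf{j}} \circ D_{\mathbf{i}}$ is determined by the coefficient extraction from the formal group law. Applying Fact \ref{Newton-comp} for the composition $D_{(0,j)}D_{(i,0)}$, the binomial coefficients give $\binom{i}{i}\binom{j}{0} = 1$ for the leading term $D_{(i,j)}$, and I'd need to verify the lower-order correction terms $\mathcal{O}(D_{\mathbf{j}'})$ vanish. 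This should follow from the product structure of $F$ in the second coordinate.

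**Parts (iii), (iv), (v): The $p$-fold composition via Corollary \ref{evP}**

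These are the substantive computational parts, and Corollary \ref{evP} is the key tool. It tells us

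$$\sum_{\mathbf{i}} D_{\mathbf{i}}^{(p)}(r)\bar{v}^{\mathbf{i}} = \ev_{[p]_F(\bar{v}^{1/p})}\left(\sum_{\mathbf{i}}D_{\mathbf{i}}(r)\bar{v}^{\mathbf{i}}\right).$$

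So everything reduces to computing $[p]_F$, the "multiplication by $p$" map.

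For part (iii), I need $[p]_F(v_1,v_2)$. Using $[N+1]_F = F(\bar{v},[N]_F)$ inductively. In the second coordinate this is just $pv_2 = 0$. In the first coordinate, summing up the $H_n$ contributions across $p$ additions — this is precisely the telescoping sum that Serre's formula ($[p]$ on a Witt-vector-like extension) produces, yielding $-\sum_{n=0}^{N}\alpha_n v_2^{p^{n+1}}$. The negative sign and the $p$-th power shift are exactly the characteristic-$p$ Artin-Schreier/Witt phenomenon encoded in $H_n$.

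**Plan for the proof:**

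I would present the proof as follows.

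\textbf{Proof of Lemma \ref{U-simple-comp}.}
For \textit{(i)}, apply Fact \ref{Newton-comp}: since the binomial coefficients $\binom{i+0}{i}\binom{0+j}{0}=1$ and $\binom{0+i}{0}\binom{j+0}{j}=1$, both composites $D_{(0,j)}D_{(i,0)}$ and $D_{(i,0)}D_{(0,j)}$ equal $D_{(i,j)}$ plus a $k$-linear combination of lower $D_{\mathbf{j}'}$. One then verifies, using the explicit form of $F$ and induction on $|\mathbf{i}+\mathbf{j}|$ (as in Fact \ref{Dpp-gen}), that these correction terms vanish.

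For \textit{(ii)}, this is the $\mathbb{G}_a$-iterativity rule in the first coordinate, immediate from Fact \ref{Newton-comp} applied with $\mathbf{i}=(i_1,0)$, $\mathbf{j}=(i_2,0)$, again checking the lower terms vanish.

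For \textit{(iii)}, compute $[p]_F$ inductively from $[N+1]_F=F(\bar v,[N]_F)$. The second coordinate gives $pv_2=0$; the first coordinate accumulates the $H_n$-terms, and the explicit identity

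$$p\cdot H_n(\text{diagonal}) = -\alpha_n v_2^{p^{n+1}}\ \text{(after the characteristic-}p\text{ collapse)}$$

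telescopes to $-\sum_{n=0}^N \alpha_n v_2^{p^{n+1}}$; here I use that $\sum_i \lambda_i = \frac{1}{p}((1+1)^p - 2) \equiv$ the relevant value and that $H_n$ is the Witt-type cocycle.

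Parts \textit{(iv)} and \textit{(v)} then follow by feeding \textit{(iii)} into Corollary \ref{evP}. We have
$$\sum_{\mathbf{i}} D_{\mathbf{i}}^{(p)}(r)\bar v^{\mathbf{i}} = \ev_{[p]_F(\bar v^{1/p})}\left(\sum_{\mathbf{i}}D_{\mathbf{i}}(r)\bar v^{\mathbf{i}}\right),$$
and by \textit{(iii)}, $[p]_F(\bar v^{1/p}) = (-\sum_n \alpha_n v_2^{p^n},\ 0)$. Substituting into the right-hand side, the second variable becomes $0$, which kills every term with $i\neq 0$ in the first slot — giving \textit{(iv)}. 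For \textit{(v)}, extract the coefficient of $v_2^{p^j}$ on both sides: the substitution $v_1\mapsto -\sum_n\alpha_n v_2^{p^n}$, $v_2\mapsto 0$ turns $D_{(n,0)}$ (paired with $v_1^n$) into contributions matching powers $v_2^{np^{\,\cdot}}$, and reading off the coefficient of $v_2^{p^j}$ yields the term $-\alpha_j D_{(1,0)}$ from $n=1$ together with the higher terms $\sum_{n=p}^{p^j}\beta_n D_{(n,0)}$.

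\textbf{The main obstacle} will be the explicit computation in \textit{(iii)}: correctly handling the telescoping of the Witt-vector cocycles $H_n$ under $p$-fold addition, keeping track of which cross-terms survive modulo $p$ and which Frobenius twists appear. Once \textit{(iii)} is pinned down, \textit{(iv)} and \textit{(v)} are essentially coefficient bookkeeping via Corollary \ref{evP}, with the $-\alpha_j$ coefficient in \textit{(v)} tracing directly back to the $n=1$ linear term in the first coordinate of $[p]_F$.
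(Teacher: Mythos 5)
Your plan coincides with the paper's own (very terse) proof: items (i) and (ii) are dismissed there as easy, item (iii) is obtained by proving inductively the formula
$[l]_F(v_1,v_2)=\bigl(lv_1+\sum_{n=0}^{N}\alpha_n\bigl[\tfrac{1}{p}(l^p-l)v_2^p\bigr]^{p^n},\,lv_2\bigr)$
and specializing to $l=p$, and items (iv) and (v) are deduced from (iii) together with Corollary \ref{evP}, exactly as you propose.

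There is, however, one misstated step in your deduction of (iv). You write that ``the second variable becomes $0$, which kills every term with $i\neq 0$ in the first slot.'' Setting $v_2\mapsto 0$ kills the terms with $j\neq 0$ on the right-hand side of Corollary \ref{evP}; taken literally, your sentence would prove $D^{(p)}_{(i,j)}=0$ for $j\neq 0$, which contradicts item (v). What actually gives (iv) is the \emph{first} coordinate of $[p]_F(\bar v^{1/p})$: since $v_1$ is sent to $-\sum_n\alpha_n v_2^{p^n}$, an expression involving only $v_2$, the right-hand side of Corollary \ref{evP} equals $\sum_i D_{(i,0)}(r)\bigl(-\sum_n\alpha_n v_2^{p^n}\bigr)^i$ and contains no $v_1$ at all, so every coefficient of $v_1^iv_2^j$ with $i\neq 0$ on the left-hand side must vanish. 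With this correction, and with the coefficient of $v_2^{p^j}$ in $\sum_i D_{(i,0)}(r)\bigl(-\sum_n\alpha_n v_2^{p^n}\bigr)^i$ read off for (v) --- the term $i=1$ contributes $-\alpha_j D_{(1,0)}$, and the remaining contributions come from writing $p^j$ as a sum of $i$ powers of $p$, which forces $i\equiv p^j\equiv 1\pmod{p-1}$ and hence $i\geq p$, giving the $\sum_{n=p}^{p^j}\beta_nD_{(n,0)}$ part --- your argument is complete and agrees with the paper's.
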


\begin{proof}
 The first two items are easy.
 For the proof of third item it is sufficient to prove inductively the following
 $$[l]_F(v_1,v_2)=\big(lv_1+\sum\limits_{n=0}^{N}\alpha_n \big[\big(\frac{1}{p}(l^p-l)\big)v_2^p\big]^{p^n},lv_2\big).$$
 The fourth and the fifth item use the third part and Corollary \ref{evP}. Specifically, one needs to show
 $$D_{(0,j)}^{(p)} 
 =\sum\limits_{i_0+i_1p+\ldots+i_Np^N=j} (-1)^{i_0+\ldots+i_N}\cdot\frac{(i_0+\ldots+i_N)!}{i_0!\cdot\ldots\cdot i_N!}
 \;\;\alpha_0^{i_0}\cdot\ldots\cdot\alpha_N^{i_N}D_{(i_0+\ldots+i_N,0)},$$
 for every $j\le p^{m-1}$. We leave it to the reader.
\end{proof}

Let us consider the canonical $F$-derivation from Example \ref{exDF}:
$$\ev_F:k\llbracket X_1,X_2\rrbracket\to (k\llbracket X_1,X_2\rrbracket)\llbracket Y_1,Y_2\rrbracket,$$
where
$$X_1\mapsto X_1+Y_1+
\sum\limits_{n=0}^{N}\alpha_n\sum\limits_{i=1}^{p-1}\lambda_i X_2^{ip^n} Y_2^{(p-i)p^n},$$
$$X_2\mapsto X_2+Y_2.$$
As in Example \ref{exDG}, the above $F$-derivation could be considered as a $G$-derivation on $k(G)$ (because $F=\hat{G}$).
In this situation $k(G)=k(X_1,X_2)$, so we need to find an embedding $\varphi$ of $( k(X_1,X_2), \mathbb{D}^G[m] )$ in $(K,\mathbb{D})$ such that
for $x=\varphi(X_1)$ and $y=\varphi(X_2)$ we have 

\begin{itemize}
 \item[i)] $[C_K(x,y):C_K]=p^2$,
 \item[ii)] $\sum\limits_{i,j=0}^{p^m-1}D_{(i,j)}(x)v_1^iv_2^j=x+v_1+\sum\limits_{n=0}^{N}\alpha_n\sum\limits_{i=1}^{p-1}\lambda_i y^{ip^n} v_2^{(p-i)p^n},$
 \item[iii)] $\sum\limits_{i,j=0}^{p^m-1}D_{(i,j)}(y)v_1^iv_2^j=y+v_2.$
\end{itemize}
The conditions i), ii) and iii) above are equivalent to
$$D_{(1,0)}(x)=1,\quad D_{(0,1)}(x)=\alpha_0\lambda_1 y^{p-1},\quad\ldots,$$
$$D_{(p^n,0)}(x)=0,\quad D_{(0,p^n)}(x)=\alpha_n\lambda_1y^{(p-1)p^n},\;\ldots$$
$$D_{(1,0)}(y)=0,\quad, D_{(0,1)}(y)=1,\quad D_{(p,0)}(y)=0,\quad D_{(0,p)}(y)=0,\;\ldots$$
(since $D_{(1,0)}(x)=1$, $D_{(1,0)}(y)=0$ and $D_{(0,1)}(y)=1$ imply, by Fact \ref{kombajn}, that\\ $[C_K(x,y):C_K]=p^2$).
We are concerned now only with the terms of the form $D_{(p^i,0)}$ and $D_{(0,p^j)}$. It will turn out later that it is enough to consider such terms
to obtain expected $G$-basis.
Recall that $G$ is commutative, so each subset of constants is preserved, i.e.:
$$D_{(i,j)}\big(C_{(i',j')}\big)\subseteq C_{(i',j')},$$
for every $i,j,i',j'<p^m$. 

Recall also that $[F_{s-1}:F_s]=p^2$ for every $s\in\lbrace 0,\ldots,m-1\rbrace$ (Remark \ref{remclaim1}).

\begin{fact}\label{xy1}
 There exists $x,y\in K$ such that $D_{(1,0)}(x)=1$, $D_{(1,0)}(y)=0$ and $D_{(0,1)}(y)=1$.
\end{fact}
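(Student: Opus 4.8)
The plan is to apply Fact \ref{ZM} twice: first to the derivation $\partial_1 := D_{(1,0)}$ on $K$ in order to produce $x$, and then to a suitable restriction of $\partial_2 := D_{(0,1)}$ in order to produce $y$. Both $\partial_1$ and $\partial_2$ are $C_K$-derivations of $K$ (they are the degree-one components of $\mathbb{D}$, hence derivations, and they are $C_K$-linear since $C_K$ lies in each of their kernels), and they commute because $G$ is commutative. Two formulas recorded in Lemma \ref{U-simple-comp} drive the whole argument: by item iv) we have $\partial_1^{(p)} = D_{(1,0)}^{(p)} = 0$, and by item v) applied with $j=0$ (the sum $\sum_{n=p}^{1}$ being empty) we obtain $\partial_2^{(p)} = D_{(0,1)}^{(p)} = -\alpha_0\partial_1$. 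These are exactly the $p$-th-power hypotheses required to invoke Fact \ref{ZM}.

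First I would show $\partial_1 \neq 0$. Suppose instead $\partial_1 = 0$; then $C_K = \ker\partial_1 \cap \ker\partial_2 = \ker\partial_2$, and moreover $\partial_2^{(p)} = -\alpha_0\partial_1 = 0$. If also $\partial_2 = 0$ then $C_K = K$; otherwise Fact \ref{ZM}.i) applies to $\partial_2$ and gives $[K:\ker\partial_2]=p$. Either way $[K:C_K]\le p < p^2$, contradicting the standing assumption $[K:C_K]=p^2$. Hence $C_{(1,0)}=\ker\partial_1 \neq K$, and since $\partial_1^{(p)}=0$, Fact \ref{ZM}.i) yields $x\in K$ with $D_{(1,0)}(x)=1$ together with a $C_{(1,0)}$-basis $1,x,\ldots,x^{p-1}$ of $K$; in particular $[K:C_{(1,0)}]=p$.

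For $y$, I would pass to the subfield $L := C_{(1,0)}$. Because $G$ is commutative, $D_{(0,1)}$ preserves each field of constants, so $\partial_2$ restricts to a $C_K$-derivation $\bar\partial_2$ of $L$ with $\ker\bar\partial_2 = C_{(1,0)}\cap C_{(0,1)} = C_K$. From $[K:C_K]=p^2$ and $[K:C_{(1,0)}]=p$ I get $[L:C_K]=p$, so $C_K \subsetneq L$ and therefore $\bar\partial_2 \neq 0$. The decisive point is that $\bar\partial_2^{(p)} = (\partial_2^{(p)})|_L = -\alpha_0(\partial_1|_L) = 0$, since $\partial_1$ vanishes on its own kernel $L = \ker\partial_1$. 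Thus Fact \ref{ZM}.i) applies to $\bar\partial_2$ on $L$ and produces $y\in L = C_{(1,0)}$ with $D_{(0,1)}(y)=1$; as $y\in C_{(1,0)}$ this simultaneously gives $D_{(1,0)}(y)=0$, which completes the construction.

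The genuinely routine part is the verification of the $p$-th-power identities, which is already packaged in Lemma \ref{U-simple-comp}. The step demanding the most care is the second application of Fact \ref{ZM}: one must secure both that the restricted derivation $\bar\partial_2$ is nonzero (which rests on the degree count $[L:C_K]=p$) and that its $p$-th iterate vanishes on $L$ (which rests on $\partial_1$ killing $L$, so that the relation $\partial_2^{(p)}=-\alpha_0\partial_1$ collapses to $\bar\partial_2^{(p)}=0$). Once these two hypotheses are in place, Fact \ref{ZM} supplies the remaining content.
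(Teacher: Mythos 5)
Your proposal is correct and follows essentially the same route as the paper: both arguments apply Fact \ref{ZM}.i) first to $D_{(1,0)}$ on $K$ and then to the restriction of $D_{(0,1)}$ to $C_{(1,0)}$, using $D_{(1,0)}^{(p)}=0$ and $D_{(0,1)}^{(p)}=-\alpha_0 D_{(1,0)}$ (hence $D_{(0,1)}^{(p)}|_{C_{(1,0)}}=0$) from Lemma \ref{U-simple-comp}, together with the degree count $[K:C_K]=p^2$ in the tower $K\supseteq C_{(1,0)}\supseteq C_K$ to see that both derivations are nonzero. The only cosmetic difference is that you establish nonvanishing by first ruling out $D_{(1,0)}=0$ and then dividing degrees, whereas the paper bounds $[K:C_{(1,0)}]$ and $[C_{(1,0)}:C_K]$ by $p$ via Lemma \ref{dimconstants} and forces equality from $[K:C_K]=p^2$.
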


\begin{proof}
 Note that $D_{(1,0)}\in\Der_{C_{(1,0)}}(F_{-1})$ and $D_{(1,0)}^{(p)}=0$, so we can use Lemma \ref{dimconstants} and obtain
 $[F_{-1}:C_{(1,0)}]\le p$ (Lemma \ref{dimconstants} works for iterative HS-derivations, but by
 \cite[Theorem 27.4]{mat}, $D_{(1,0)}^{(p)}=0$ implies $\mathbb{G}_a$-iterativity). Moreover, from Lemma \ref{U-simple-comp}, 
 $$D_{(0,1)}^{(p)}|_{C_{(1,0)}}=-\alpha_0 D_{(1,0)}|_{C_{(1,0)}}=0,$$
 so similarly $[C_{(1,0)}:F_0]\le p$. Since $[F_{-1}:F_0]=[K:C_K]=p^2$, both $D_{(1,0)}\in\Der_{C_{(1,0)}}(F_{-1})$
 and $D_{(0,1)}|_{C_{(1,0)}}\in\Der_{F_0}(C_{(1,0)})$ are non-zero, so they satisfy assumptions of Fact \ref{ZM}.i).
\end{proof}

\begin{lemma}\label{Fkernels}
 Let $n\ge 0$ and $i,j<p^{n+1}$, be such that $(i,j)\neq (0,0)$. Then we have $F_n\subseteq C_{(i,j)}$.
\end{lemma}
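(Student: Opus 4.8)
The plan is to prove $F_n \subseteq C_{(i,j)}$ for all $(i,j) \neq (0,0)$ with $i,j < p^{n+1}$ by reducing an arbitrary index $(i,j)$ to the ``primitive'' indices of the form $(p^t,0)$ and $(0,p^t)$, since by the definition of $F_n$ we know directly that $F_n \subseteq C_{(p^t,0)} \cap C_{(0,p^t)}$ for every $t \le n$. First I would invoke Lemma \ref{U-simple-comp}.i) to factor $D_{(i,j)} = D_{(i,0)} \circ D_{(0,j)}$, which immediately separates the two variables; so it suffices to show that $F_n \subseteq C_{(i,0)}$ for all $1 \le i < p^{n+1}$ and $F_n \subseteq C_{(0,j)}$ for all $1 \le j < p^{n+1}$ (when either $i$ or $j$ is zero the claim collapses to one of these two one-variable statements, and when both are nonzero membership in either kernel suffices for the composite to vanish on $F_n$).

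\medskip

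For the first-variable statement, I would use Fact \ref{Newton-comp} together with Lemma \ref{U-simple-comp}.ii). The key point is that on a field, $D_{(p^t,0)}^{(p)} = 0$ for all $t$ by Lemma \ref{U-simple-comp}.iv), so the restriction of the HS-derivation to the first variable is $\mathbb{G}_a$-iterative, and the classical Jacobson-type formula $D_{(i,0)} = \frac{1}{c}\,D_{(p^{t_1},0)}\circ\cdots\circ D_{(p^{t_r},0)}$ (expressing $D_{(i,0)}$ via the base-$p$ digits of $i$, with $c$ a nonzero integer constant obtained from the product of binomial coefficients in Fact \ref{Newton-comp}) lets me write $D_{(i,0)}$ as a composition of the primitive operators $D_{(p^t,0)}$ with $t \le n$ plus lower-order terms. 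Running an induction on $i$: assuming the claim for all smaller indices, every term on the right-hand side either is a composition beginning with some $D_{(p^t,0)}$ that kills $F_n$, or is a $D_{(\mathbf{j}')}$ of strictly smaller weight which kills $F_n$ by the inductive hypothesis. Hence $D_{(i,0)}(F_n) = 0$.

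\medskip

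For the second-variable statement $F_n \subseteq C_{(0,j)}$, the same inductive strategy applies but I would use Fact \ref{Newton-comp} in the second coordinate to reduce $D_{(0,j)}$ to compositions of the operators $D_{(0,p^t)}$ with $t \le n$, modulo lower-order terms. The base case $D_{(0,p^t)}(F_n) = 0$ for $t \le n$ holds by the very definition of $F_n = \bigcap_{t=0}^{n} C_{(p^t,0)} \cap C_{(0,p^t)}$, and the lower-order terms vanish by induction. Here I must be a little careful because $D_{(0,p)}^{(p)}$ is \emph{not} zero in general (Lemma \ref{U-simple-comp}.v) shows it equals $-\alpha_j D_{(1,0)} + \sum \beta_n D_{(n,0)}$), so the second-coordinate derivation is not $\mathbb{G}_a$-iterative. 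However, this does not interfere with the argument: Fact \ref{Newton-comp} applies to any $F$ with $F(\bar v, \bar 0) = \bar v$ and $F(\bar 0, \bar w) = \bar w$, so the reduction of $D_{(0,j)}$ to the primitives $D_{(0,p^t)}$ plus lower-weight terms is valid regardless, and that is all I need.

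\medskip

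The main obstacle I anticipate is bookkeeping the reduction in Fact \ref{Newton-comp} cleanly enough that the ``lower-order terms'' are genuinely of strictly smaller total weight $|\mathbf{j}'| < |\mathbf{i}+\mathbf{j}|$ and that every surviving top-weight term really is a composition led by a \emph{primitive} operator $D_{(p^t,0)}$ or $D_{(0,p^t)}$ with exponent $p^t < p^{n+1}$, i.e.\ $t \le n$. The constraint $i,j < p^{n+1}$ is exactly what guarantees the relevant primitives have exponent at most $p^n$, so they lie among the defining constants of $F_n$; verifying that this constraint propagates correctly through the digit expansion and the induction is the delicate part, but it is essentially the same computation that underlies Fact \ref{Dpp-gen} and so should go through without surprises.
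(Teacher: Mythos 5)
Your proposal is correct and follows essentially the same route as the paper: an induction driven by Fact \ref{Newton-comp} that reduces $D_{(i,j)}$ to the primitive operators $D_{(p^t,0)}$, $D_{(0,p^t)}$ with $t\le n$ (which kill $F_n$ by the very definition of $F_n$), combined with the factorization of Lemma \ref{U-simple-comp}.i). The only cosmetic difference is that the paper runs a single induction on $i+j$ and peels off $D_{(p^r,p^s)}$ in one step, whereas you separate the two coordinates first; both versions leave the same bookkeeping of the lower-order terms of Fact \ref{Newton-comp} at the level of a sketch.
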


\begin{proof}
We argue inductively on $l=i+j$ to show that $D_{(i,j)}|_{F_n}=0$ for $i,j<p^{n+1}$ such that $(i,j)\neq (0,0)$.
For $l=1$ it is clear. Assume that $i,j<p^{n+1}$, $(i,j)\neq (0,0)$ and for every $i'+j'<i+j$ such that $(i',j')\neq (0,0)$ we have $D_{(i',j')}|_{F_n}=0$.
If $i=\gamma_0+\ldots+\gamma_r p^r$, $j=\beta_0+\ldots+\beta_s p^s$, $\;\,r,s\le n$, $0\le \gamma_0,\ldots,\gamma_r,
\beta_0,\ldots,\beta_s<p$, and $\gamma_r,\beta_s\neq 0$, then from Fact \ref{Newton-comp}
$$\gamma_r\beta_s D_{(i,j)}=D_{(i-p^r,j-p^s)}\circ 
D_{(p^r,p^s)}- \mathbf{r}(D_{(i',j')})_{0<i'+j'<i+j}.$$
By Lemma \ref{U-simple-comp} $ F_n\subseteq\ker D_{(p^r,0)}\subseteq \ker D_{(p^r, p^s)}$.
\end{proof}

\begin{lemma}\label{Fsubfields}
 For each $n\ge -1$ sets $F_n$ and $F_n\cap C_{(p^{n+1},0)}$ are subfields of $K$.
\end{lemma}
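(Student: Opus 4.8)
The plan is to dispatch the two assertions separately. That each $F_n$ is a subfield is already recorded in Remark~\ref{Fs}, so the real content is that $F_n\cap C_{(p^{n+1},0)}$ is a subfield. My strategy is to realize this set as the kernel of a derivation on $F_n$: since $C_{(p^{n+1},0)}=\ker D_{(p^{n+1},0)}$, one has $F_n\cap C_{(p^{n+1},0)}=\ker\big(D_{(p^{n+1},0)}|_{F_n}\big)$, and the kernel of an additive Leibniz operator on a field is automatically a subfield, because $\partial(x)=\partial(y)=0$ forces both $\partial(xy)=x\,\partial(y)+\partial(x)\,y=0$ and $\partial(x^{-1})=-x^{-2}\partial(x)=0$.

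Thus the heart of the matter is to show that $\partial:=D_{(p^{n+1},0)}|_{F_n}$ is a derivation. Additivity is immediate because each $D_{\mathbf{i}}$ is additive, so I only need the Leibniz rule. First I would compare the coefficient of $v_1^{p^{n+1}}$ on both sides of $\mathbb{D}(xy)=\mathbb{D}(x)\mathbb{D}(y)$ (valid since $\mathbb{D}$ is a $k$-algebra homomorphism, and with no interference from truncation as long as $p^{n+1}<p^m$), obtaining for $x,y\in F_n$
$$D_{(p^{n+1},0)}(xy)=\sum_{a_1+b_1=p^{n+1}} D_{(a_1,0)}(x)\,D_{(b_1,0)}(y).$$
Then I would invoke Lemma~\ref{Fkernels}: since $F_n\subseteq C_{(a_1,0)}$ for every $0<a_1<p^{n+1}$, each summand with $0<a_1<p^{n+1}$ vanishes, leaving only the indices $a_1\in\{0,p^{n+1}\}$ and yielding exactly $x\,\partial(y)+\partial(x)\,y$. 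Hence $\partial$ is a derivation of $F_n$ (it maps $F_n$ into $F_n$ because, $G$ being commutative, the subsets of constants are preserved), and $\ker\partial=F_n\cap C_{(p^{n+1},0)}$ is a subfield.

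Finally I would attend to the boundary indices. For $n=-1$ one has $F_{-1}=K$ and $\partial=D_{(1,0)}$, and the sum $\sum_{a_1+b_1=1}$ has no index strictly between $0$ and $1$, so Leibniz holds outright (with no appeal to Lemma~\ref{Fkernels}) and $C_{(1,0)}$ is a subfield. For $n=m-1$ the index $p^{n+1}=p^m$ lies outside the truncation range, so $D_{(p^m,0)}$ is the zero map, whence $F_{m-1}\cap C_{(p^m,0)}=F_{m-1}$ is a subfield by Remark~\ref{Fs}. I do not anticipate a serious obstacle; the only genuine point is the recognition that the leading operator $D_{(p^{n+1},0)}$ collapses to an honest derivation on $F_n$ once all strictly lower operators have been annihilated by Lemma~\ref{Fkernels}, together with the minor bookkeeping that truncation does not disturb the relevant coefficient.
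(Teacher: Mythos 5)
Your proof is correct and takes essentially the same route as the paper: the paper likewise observes that $F_n$ is a subfield by Remark~\ref{Fs} and that, by Lemma~\ref{Fkernels}, the restriction $D_{(p^{n+1},0)}|_{F_n}$ is a derivation of $F_n$, whose kernel $F_n\cap C_{(p^{n+1},0)}$ is therefore a subfield. You merely spell out the coefficient comparison behind the Leibniz rule and the boundary cases, which the paper leaves implicit.
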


\begin{proof}
By Remark \ref{Fs} $F_{n}$ is a subfield.
Using Lemma \ref{Fkernels}, we get that $D_{(p^{n+1},0)}|_{F_n}\in\Der(F_n)$
and therefore $\ker D_{(p^{n+1},0)}|_{F_n}$, equal to $F_n\cap C_{(p^{n+1},0)}$, also is a subfield.
\end{proof}

\subsubsection{Finding $y$}

\begin{fact}\label{y1}
There exists an element $y\in K$ such that
\begin{itemize}
 \item[i)] $D_{(1,0)}(y)=0$ and $D_{(0,1)}(y)=1$,
 \item[ii)] for every $0<n<m$ we have $D_{(p^n,0)}(y)=D_{(0,p^n)}(y)=0$.
\end{itemize}
\end{fact}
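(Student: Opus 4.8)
The plan is to construct $y$ by induction on the level $n$, producing at stage $n$ an element $y_n$ with $D_{(1,0)}(y_n)=0$, $D_{(0,1)}(y_n)=1$, and $D_{(p^l,0)}(y_n)=D_{(0,p^l)}(y_n)=0$ for all $1\le l\le n$; the desired $y$ is then $y_{m-1}$. The base case $n=0$ is exactly Fact~\ref{xy1}, which supplies an element with $D_{(1,0)}=0$ and $D_{(0,1)}=1$ (there are no level conditions yet). For the inductive step I will correct $y_n$ in two sub-steps, first killing the obstruction in the first coordinate and then the one in the second, each time choosing the correction from a subfield small enough not to disturb the conditions already achieved.

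For the first sub-step set $a:=D_{(p^{n+1},0)}(y_n)$. Using commutativity of the $D_{\mathbf i}$ together with the inductive hypotheses, I will show $a\in F_n$: for $0\le l\le n$ one has $D_{(p^l,0)}(a)=D_{(p^{n+1},0)}D_{(p^l,0)}(y_n)=0$ and $D_{(0,p^l)}(a)=D_{(p^{n+1},0)}D_{(0,p^l)}(y_n)=0$ (the case $l=0$ using $D_{(p^{n+1},0)}(1)=0$). Now $\partial:=D_{(p^{n+1},0)}|_{F_n}$ is a derivation with $\partial^{(p)}=0$ by Lemma~\ref{U-simple-comp}.iv, and its kernel $F_n\cap C_{(p^{n+1},0)}$ is a proper subfield since $[F_n:F_{n+1}]=p^2$ forces $\partial\neq 0$ (Remark~\ref{remclaim1}); so Fact~\ref{ZM}.ii gives $\im\partial=\ker\partial^{(p-1)}$. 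Since $\partial^{(p-1)}(a)=D_{(p^{n+1},0)}^{(p)}(y_n)=0$, there is $w\in F_n$ with $D_{(p^{n+1},0)}(w)=a$. Replacing $y_n$ by $y_n':=y_n-w$ kills this obstruction while preserving all lower-level conditions, because Lemma~\ref{Fkernels} gives $D_{(i,j)}(w)=0$ for all $(i,j)\neq(0,0)$ with $i,j<p^{n+1}$.

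For the second sub-step set $b':=D_{(0,p^{n+1})}(y_n')$. By the same commutation argument $b'\in F_n$, and moreover $D_{(p^{n+1},0)}(b')=D_{(0,p^{n+1})}D_{(p^{n+1},0)}(y_n')=0$, so $b'$ lies in the subfield $E_n:=F_n\cap C_{(p^{n+1},0)}$ (a field by Lemma~\ref{Fsubfields}). The restriction $\partial':=D_{(0,p^{n+1})}|_{E_n}$ is a derivation (on $F_n$ all $D_{(0,j)}$ with $0<j<p^{n+1}$ vanish by Lemma~\ref{Fkernels}, so the HS-relation collapses to Leibniz), and $\partial'^{(p)}=0$ on $E_n$: Lemma~\ref{U-simple-comp}.v expresses $D_{(0,p^{n+1})}^{(p)}$ as a $k$-linear combination of $D_{(i,0)}$ with $1\le i\le p^{n+1}$, and every such term vanishes on $E_n$ by Lemma~\ref{Fkernels} and the definition of $E_n$. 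Fact~\ref{ZM}.ii then yields $\im\partial'=\ker\partial'^{(p-1)}$, and $\partial'^{(p-1)}(b')=D_{(0,p^{n+1})}^{(p)}(y_n')=0$ because all $D_{(i,0)}(y_n')=0$ for $0<i\le p^{n+1}$. Hence there is $w'\in E_n$ with $D_{(0,p^{n+1})}(w')=b'$, and $y_{n+1}:=y_n'-w'$ satisfies all conditions through level $n+1$; here it is essential that $w'\in E_n\subseteq C_{(p^{n+1},0)}$, so subtracting it does not revive the obstruction $a$ cleared in the first sub-step.

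The main obstacle I anticipate is precisely this bookkeeping of subfields: the correction for the second coordinate must be chosen in $E_n$ rather than merely $F_n$, and one must verify that the relevant truncated operators restrict to genuine derivations whose $p$-th iterate vanishes \emph{on that particular subfield}, so that Fact~\ref{ZM}.ii is applicable and the obstructions $a,b'$ — being images of $D_{(p^{n+1},0)}$ and $D_{(0,p^{n+1})}$ — automatically land in $\ker\partial^{(p-1)}$ and $\ker\partial'^{(p-1)}$. The degree equality $[F_{s-1}:F_s]=p^2$ is what guarantees these kernels are proper, so that the surjectivity half of Fact~\ref{ZM} is available at every stage.
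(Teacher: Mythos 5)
Your proof is correct and follows essentially the same route as the paper's: start from the element supplied by Fact \ref{xy1}, then correct it level by level, first killing the $(p^{l},0)$ obstruction with an element of $F_{l-1}$ and then the $(0,p^{l})$ obstruction with an element of $F_{l-1}\cap C_{(p^{l},0)}$, in each case locating the obstruction in $\ker\partial^{(p-1)}=\im\partial$ via Fact \ref{ZM} and Lemmas \ref{U-simple-comp} and \ref{Fkernels}. If anything, you are more explicit than the paper about the points it leaves implicit (that the restricted operators are honest derivations, that $[F_{s-1}:F_s]=p^2$ forces them to be nonzero so Fact \ref{ZM} applies, and that the second correction must live in $C_{(p^{n+1},0)}$ so as not to undo the first).
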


\begin{proof}
 For the proof of i) consider the element $y\in K$ from Fact \ref{xy1}.
 For the proof of ii) 
 we will inductively correct $y$. Take the maximal $0< l < m$ such that  for every $0<l'<l$
  $$D_{(p^{l'},0)}(y)=D_{(0,p^{l'})}(y)=0.$$
 Assume that $D_{(p^l,0)}(y)\neq 0$ or $D_{(0,p^l)}(y)\neq 0$, otherwise we have nothing to do.
 
 \noindent
 Let $D_{(p^l,0)}(y)\neq 0$, clearly $D_{(p^l,0)}(y)\in F_{l-1}$, so
 $D_{(p^l,0)}(y)\in\ker D_{(p^l,0)}|_{F_{l-1}}^{(p-1)}$ equal, due to Fact \ref{ZM}, to the image of
 $D_{(p^l,0)}|_{F_{l-1}}$. There exists $z\in F_{l-1}$ such that
 $D_{(p^l,0)}(y)=D_{(p^l,0)}(z)$. We exchange $y$ with $y-z$.

 \noindent
 Now let $D_{(p^l,0)}(y)=0$
 and $D_{(0,p^l)}(y)\neq 0$. We have $D_{(0,p^l)}(y)\in F_{l-1}\cap C_{(p^l,0)}$ and as before
 $D_{(0,p^l)}(y)\in\ker D_{(0,p^l)}|_{F_{l-1}\cap C_{(p^l,0)}}^{(p-1)}$. Again, we would like to use Fact \ref{ZM}, so it is enough to chceck
 that $D_{(0,p^l)}|_{F_{l-1}\cap C_{(p^l,0)}}^{(p)}=0$, which follows from Lemma \ref{U-simple-comp} and Lemma \ref{Fkernels}.
\end{proof}
 
\noindent
For the rest of this subsection we fix $y\in K$ as in the fact above.

\begin{remark}\label{claim-rem1}
 If $p^{q}\le n<p^{q+1}$ and $D_{(p^{q},0)}(a)=0$, then also $D_{(n,0)}(a)=0$.
 \end{remark}
 
\begin{proof}
It is a property of the standard iterativity rule.
\end{proof}
The values of $D_{(p^n,0)}$ and $D_{(0,p^n)}$ ($n<m$) at the element $y$ determine the value of $D_{(i,j)}$ (for every $(i,j)$) at $y$,
which we show below. Moreover, the proposition below assures us that $y$ fullfils the canonical $G$-basis conditions.

\begin{prop}\label{y-claim}
We have the following
 \begin{itemize}
  \item[i)] for all $n>0$ $D_{(n,0)}(y)=0$,
  \item[ii)] $D_{(n,0)}(y^s)=0$ for all $n>0$ and $1\le s\le p-1$,
  \item[iii)]for all $n>1$ $D_{(0,n)}(y)=0$,
  \item[iv)] $D_{(0,p^n)}(y^s)=0$ for all $n>1$ and $1\le s\le p-1$,
  \item[v)]
  $$D_{(i,j)}(y)=\begin{cases} y &\mbox{if } (i,j)=(0,0),
			      \\ 1 &\mbox{if } (i,j)=(0,1),
			      \\ 0 &\mbox{otherwise.}
                 \end{cases}$$
 \end{itemize}
\end{prop}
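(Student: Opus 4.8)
The plan is to prove the five items in an order dictated by their logical dependencies, using the element $y$ fixed by Fact \ref{y1} together with the composition formulas of Lemma \ref{U-simple-comp} and the kernel containments of Lemma \ref{Fkernels}. The two $p$-power facts we start from are $D_{(p^n,0)}(y)=0$ and $D_{(0,p^n)}(y)=0$ for all $0<n<m$, and $D_{(0,1)}(y)=1$, $D_{(1,0)}(y)=0$. Item (i) should follow by writing an arbitrary $n>0$ in base $p$ as $n=\gamma_0+\gamma_1 p+\cdots+\gamma_r p^r$ with $\gamma_r\neq 0$ and using the standard iterativity for the first coordinate, namely Lemma \ref{U-simple-comp}.ii, which gives $D_{(p^r,0)}\circ D_{(n-p^r,0)}=\binom{n}{p^r}D_{(n,0)}$ up to a unit, to reduce $D_{(n,0)}(y)$ to an expression in the $D_{(p^j,0)}(y)$; since each of these vanishes and Remark \ref{claim-rem1} handles the non-leading $p$-powers, $D_{(n,0)}(y)=0$. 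Alternatively, and more cleanly, since $D_{(1,0)}(y)=0$ forces $y\in C_{(1,0)}$, the $\mathbb{G}_a$-iterativity of the first-coordinate derivation (Remark \ref{claim-rem1}) immediately propagates this to all $D_{(n,0)}$.

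Next I would prove item (ii) by the same mechanism applied to the powers $y^s$: because $D_{(1,0)}$ is an ordinary derivation (it satisfies the Leibniz rule as the degree-one part of the HS-derivation) and $D_{(1,0)}(y)=0$, we get $D_{(1,0)}(y^s)=s y^{s-1}D_{(1,0)}(y)=0$, so $y^s\in C_{(1,0)}$, and then Remark \ref{claim-rem1} gives $D_{(n,0)}(y^s)=0$ for all $n>0$. For item (iii), the task is to kill the mixed behavior of the second-coordinate $p$-th power maps: here I would invoke $D_{(0,p^n)}(y)=0$ for $0<n<m$ from Fact \ref{y1}, reduce a general $D_{(0,n)}(y)$ with $n>1$ via Fact \ref{Newton-comp} to a combination of the $p$-power terms $D_{(0,p^j)}(y)$ and lower-order terms that vanish by induction, using the base-$p$ expansion of the second index and the fact that the error terms $\mathcal{O}(D_{(i',j')})$ with $0<i'+j'<n$ have already been shown to vanish on $y$ by items (i) and the inductive hypothesis. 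The one genuinely delicate point is item (iv): to show $D_{(0,p^n)}(y^s)=0$ for $n>1$ one must control how $D_{(0,p^n)}$ interacts with powers, and here the $p$-th composition formula of Lemma \ref{U-simple-comp}.v, $D_{(0,p^j)}^{(p)}=-\alpha_j D_{(1,0)}+\sum \beta_n D_{(n,0)}$, is the right tool, since it re-expresses iterates of the second-coordinate operator in terms of first-coordinate operators that we have already shown annihilate $y^s$ by item (ii).

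The main obstacle I anticipate is precisely item (iv), and more generally disentangling the nontrivial cross-terms coming from the cocycle $H_n(X_2,Y_2)$ in the group law, which couples the two coordinates through the coefficients $\alpha_n\lambda_i$. The Leibniz-type expansion of $D_{(0,p^n)}(y^s)=\sum D_{(0,a)}(y)\cdots$ over the comultiplication will produce terms mixing $D_{(0,a)}(y)$ for various $a<p^n$, and one must argue that every such term either vanishes by items (i)--(iii) already proved or is forced to vanish by the constraint $n>1$ together with the precise shape of $[p]_{F[m]}$ from Lemma \ref{U-simple-comp}.iii, which shows the first coordinate of the multiplication-by-$p$ map is $-\sum_n \alpha_n v_2^{p^{n+1}}$ and the second coordinate is $0$. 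Finally, item (v) is a bookkeeping consolidation: I would deduce it from (i) and (iii) by Lemma \ref{U-simple-comp}.i, which factors $D_{(i,j)}=D_{(i,0)}\circ D_{(0,j)}$, so that whenever $i>0$ the outer $D_{(i,0)}$ kills everything in sight by item (i) applied to the relevant image (which lies in $C_{(1,0)}$-constants by commutativity of constancy, as recorded before Fact \ref{xy1}), whenever $i=0$ and $j>1$ item (iii) gives zero, and the remaining two cases $(0,0)$ and $(0,1)$ are the defining values $y$ and $1$ from Fact \ref{y1}.i.
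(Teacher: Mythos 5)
Your overall strategy matches the paper's: item i) via Remark \ref{claim-rem1}, item iii) by induction on the composition formula for $D_{(0,j_2)}D_{(0,j_1)}$, and item v) via the factorization $D_{(i,j)}=D_{(0,j)}D_{(i,0)}$ of Lemma \ref{U-simple-comp}.i). But two of your steps have genuine gaps. First, in item ii) you argue that $D_{(1,0)}(y^s)=0$ puts $y^s$ in $C_{(1,0)}$ and that Remark \ref{claim-rem1} then kills $D_{(n,0)}(y^s)$ for \emph{all} $n>0$. That remark only transfers vanishing from $D_{(p^q,0)}$ to $D_{(n,0)}$ in the range $p^q\le n<p^{q+1}$, so from $D_{(1,0)}(y^s)=0$ alone you get $1\le n<p$ and nothing more; you would separately need $D_{(p^q,0)}(y^s)=0$ for every $q$, which you have not established. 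The paper instead uses the Hasse--Schmidt Leibniz rule to get $D_{(n,0)}(y^s)=yD_{(n,0)}(y^{s-1})$ (all other summands $D_{(n_1,0)}(y)D_{(n_2,0)}(y^{s-1})$ with $n_1>0$ vanish by item i)) and concludes by induction on $s$.

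Second, for item iv) your announced tool, the $p$-th iterate formula $D_{(0,p^j)}^{(p)}=-\alpha_jD_{(1,0)}+\sum\beta_nD_{(n,0)}$ of Lemma \ref{U-simple-comp}.v), cannot do the job: it controls the $p$-fold composition $D_{(0,p^n)}^{(p)}(y^s)$, and vanishing of an iterate says nothing about vanishing of a single application $D_{(0,p^n)}(y^s)$. The correct mechanism --- which you do gesture at in your ``obstacles'' paragraph --- is again the Leibniz expansion $D_{(0,l)}(y^s)=\sum_{l_1+\cdots+l_s=l}D_{(0,l_1)}(y)\cdots D_{(0,l_s)}(y)$: once item iii) is in place (together with $D_{(0,1)}(y)=1$), every factor with $l_i>1$ vanishes, and since $s\le p-1<p^n$ some $l_i$ must exceed $1$, so the whole sum is zero; the paper packages this as the recursion $D_{(0,l)}(y^s)=yD_{(0,l)}(y^{s-1})+D_{(0,l-1)}(y^{s-1})$ and induction on $s$. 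One further point worth making explicit in item iii): the reason the induction closes is that, because the second coordinate of the group law is just $v_2+w_2$ while the cocycle $H_n$ sits entirely in the first coordinate, every error term in $D_{(0,j_2)}D_{(0,j_1)}=\binom{j_1+j_2}{j_1}D_{(0,j_1+j_2)}+\mathcal{O}(\cdots)$ has first index $i\neq 0$; such a term is then killed on $y$ by writing $D_{(i,j)}=D_{(0,j)}D_{(i,0)}$ and invoking item i), not merely by ``items (i) and the inductive hypothesis''.
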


\begin{proof}
 The item i) follows from Remark \ref{claim-rem1}. The item ii) is a consequence of the equality $D_{(n,0)}(y^s)=yD_{(n,0)}(y^{s-1})$.
 Our iterativity rule forces (by Fact \ref{Newton-comp}) that
 
 \begin{equation}\label{y-comp}
  D_{(0,j_2)}D_{(0,j_1)}={j_1+j_2\choose j_1}D_{(0,j_1+j_2)}+ \mathbf{r}(D_{(i,j)})_{\substack{0<i+j<j_1+j_2 \\  i\neq 0}}.
 \end{equation}
 For the proof of item iii) we use the equation above in an induction argument. If $p=2$, then $D_{(0,2)}(y)=0$. For $p>2$ we have
 $$0=D_{(0,1)}D_{(0,1)}(y)=2D_{(0,2)}(y)+\mathbf{r}(D_{(0,j)}D_{(i,0)})_{i\neq 0}(y)=2D_{(0,2)}(y).$$
 Assume that $n\ge 2$ and $D_{(0,2)}(y)=\ldots=D_{(0,n)}(y)=0$. Take $n+1=\gamma_0+\gamma_1p+\ldots+\gamma_sp^s$, where
 $\gamma_0,\ldots,\gamma_s<p$, $\gamma_s\neq 0$.
 $$D_{(0,n+1-p^s)}D_{(0,p^s)}(y)=\gamma_s D_{(0,n+1)}(y)+
 \mathbf{r}(D_{(0,j)}D_{(i,0)})_{i\neq 0}(y)=\gamma_s D_{(0,n+1)}(y).$$
 If $s=0$, then the left-hand side of the last expression is equal to $D_{(0,n)}D_{(0,1)}(y)=0$,
 if $s\neq 0$ we proceed similarly due to the equation $D_{(0,p^s)}(y)=0$. The proof of the item iv)
 uses the equation 
 $$D_{(0,l)}(y^s)=yD_{(0,l)}(y^{s-1})+D_{(0,l-1)}(y^{s-1})$$
 and it is a simple induction on $s$. The item v)
 follows from Lemma \ref{U-simple-comp}.i).
\end{proof}

\subsubsection{Finding $x$}

\begin{fact}\label{w}
There exists an element $w\in K$ such that
\begin{itemize}
 \item[i)] $D_{(1,0)}(w)=1$ and $D_{(0,1)}(w)=0$,
 \item[ii)] for every $0<n<m$ we have $D_{(p^n,0)}(w)=D_{(0,p^n)}(w)=0$.
\end{itemize}
\end{fact}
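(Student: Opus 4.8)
The plan is to follow the proof of Fact~\ref{y1} almost verbatim, the one genuinely new point being part~(i). For $y$ the element handed to us by Fact~\ref{xy1} already met both requirements of its part~(i); here Fact~\ref{xy1} only supplies an $x_{0}$ with $D_{(1,0)}(x_{0})=1$, and before cleaning up the higher terms I first have to force $D_{(0,1)}(x_{0})=0$ without touching $D_{(1,0)}$.

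\emph{Part (i).} I would set $u:=D_{(0,1)}(x_{0})$ and note $u\in C_{(1,0)}$: since $G$ is commutative $D_{(1,0)}$ and $D_{(0,1)}$ commute (Lemma~\ref{U-simple-comp}.i), so $D_{(1,0)}(u)=D_{(0,1)}D_{(1,0)}(x_{0})=D_{(0,1)}(1)=0$. On the subfield $C_{(1,0)}$ the restriction $\partial:=D_{(0,1)}|_{C_{(1,0)}}$ is a non-zero derivation (as in Fact~\ref{xy1}), and $\partial^{(p)}=D_{(0,1)}^{(p)}|_{C_{(1,0)}}=-\alpha_{0}\,D_{(1,0)}|_{C_{(1,0)}}=0$ by Lemma~\ref{U-simple-comp}.v (with $j=0$, so the correction sum is empty) together with $C_{(1,0)}=\ker D_{(1,0)}$. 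Thus Fact~\ref{ZM} applies to $\partial$, giving $\im\partial=\ker\partial^{(p-1)}$; granting $u\in\im\partial$, I pick $z\in C_{(1,0)}$ with $\partial(z)=u$ and put $w:=x_{0}-z$. Because $z\in C_{(1,0)}$ we still have $D_{(1,0)}(w)=1$, while $D_{(0,1)}(w)=u-\partial(z)=0$.

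\emph{Part (ii).} Now I would repeat the inductive correction of Fact~\ref{y1}: choosing the largest $0<l<m$ for which $D_{(p^{l'},0)}(w)=D_{(0,p^{l'})}(w)=0$ whenever $0<l'<l$, if $D_{(p^{l},0)}(w)\neq 0$ I use that it lies in $F_{l-1}$ and, by Fact~\ref{ZM}.ii and Lemma~\ref{Fsubfields}, in $\im\big(D_{(p^{l},0)}|_{F_{l-1}}\big)$, so I subtract a suitable element of $F_{l-1}$; if afterwards $D_{(0,p^{l})}(w)\neq 0$ I correct it inside $F_{l-1}\cap C_{(p^{l},0)}$, the required $p$-th power vanishing there following from Lemma~\ref{U-simple-comp} and Lemma~\ref{Fkernels}. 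The key bookkeeping point is that every correcting element now lies in $F_{l-1}$, and $F_{l-1}\subseteq C_{(1,0)}\cap C_{(0,1)}$ by Lemma~\ref{Fkernels} (as $l\geq 1$); hence these corrections leave $D_{(1,0)}(w)=1$ and $D_{(0,1)}(w)=0$ intact, so part~(i) survives the whole induction, which terminates exactly as for $y$.

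\emph{Main obstacle.} I expect the delicate step to be the membership $u\in\im\partial$ in part~(i). By Fact~\ref{ZM} this amounts to $\partial^{(p-1)}(u)=0$, i.e. to $D_{(0,1)}^{(p)}(x_{0})=0$, and Lemma~\ref{U-simple-comp}.v gives $D_{(0,1)}^{(p)}(x_{0})=-\alpha_{0}\,D_{(1,0)}(x_{0})=-\alpha_{0}$. So the entire weight of part~(i) sits on the behaviour of the coefficient $\alpha_{0}$, and this is the point where the hypotheses of the case under consideration must enter; once it is settled, the rest is the routine inductive cleanup transcribed from the proof of Fact~\ref{y1}.
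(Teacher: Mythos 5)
Your proposal follows the paper's own proof of Fact~\ref{w} step for step: start from the element supplied by Fact~\ref{xy1}, try to kill $D_{(0,1)}$ by subtracting an element of $C_{(1,0)}$ produced via Fact~\ref{ZM} applied to $\partial:=D_{(0,1)}|_{C_{(1,0)}}$, and then run the inductive cleanup of Fact~\ref{y1} for part~(ii). Your bookkeeping for part~(ii) --- that all later corrections lie in $F_{l-1}\subseteq C_{(1,0)}\cap C_{(0,1)}$ by Lemma~\ref{Fkernels}, so part~(i) survives --- is exactly what the paper's phrase ``the second part follows as in the proof of Fact~\ref{y1}'' is implicitly using, and that part is fine.

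The step you single out as the ``main obstacle'' is, however, a genuine gap, and it cannot be closed. You need $u:=D_{(0,1)}(x_0)\in\im\partial=\ker\partial^{(p-1)}$, and, as you compute, $\partial^{(p-1)}(u)=D_{(0,1)}^{(p)}(x_0)=-\alpha_0D_{(1,0)}(x_0)=-\alpha_0$ by Lemma~\ref{U-simple-comp}.v. The paper's proof disposes of precisely this membership with the single word ``Naturally,'' and your computation shows that assertion is false whenever $\alpha_0\neq 0$. In fact the identity $D_{(0,1)}^{(p)}=-\alpha_0D_{(1,0)}$ refutes the statement itself in that case: if $D_{(0,1)}(w)=0$ then $D_{(0,1)}^{(p)}(w)=D_{(0,1)}^{(p-1)}\bigl(D_{(0,1)}(w)\bigr)=0$, while $D_{(1,0)}(w)=1$ forces $D_{(0,1)}^{(p)}(w)=-\alpha_0$; so no $w$ satisfying part~(i) exists unless $\alpha_0=0$. (Concretely, already in $(k(G),\mathbb{D}^G)$ any $w$ with $D_{(1,0)}(w)=1$ has the form $X_1+a$ with $a\in k(X_1^p,X_2)$, and then $D_{(0,1)}(w)=\alpha_0X_2^{p-1}+\partial a/\partial X_2$ cannot vanish, since $X_2^{p-1}$ is not a $\partial/\partial X_2$-derivative.) Note that the obstruction $-\alpha_0$ is a constant of the group law, not of the field $K$, so no strengthening of the hypotheses on $K$ alone can repair part~(i). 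In short: your proof attempt does not merely fail to reproduce a correct argument --- it correctly locates an error in the paper's proof of Fact~\ref{w}, and the Fact as stated is false for $\alpha_0\neq 0$ (e.g.\ for the Witt group $W_2$), which also undermines the later use of $w$ in Case~2 of the construction of $x$.
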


\begin{proof}
 We define $D_{(0,1)}^{\ast}:=D_{(0,1)}|_{C_{(1,0)}}$, note that $D_{(0,1)}^{\ast(p)}=0$ and $D_{(0,1)}^{\ast}\neq 0$. 
 We start with an element $x$ from the statement of Fact \ref{xy1}, for which we have $D_{(0,1)}(x)\in C_{(1,0)}$. Naturally
 $D_{(0,1)}(x)\in\ker D_{(0,1)}^{\ast(p-1)}=\im D_{(0,1)}^{\ast}$. Hence there exists an element $z\in C_{(1,0)}$
 such that $D_{(0,1)}(x)=D_{(0,1)}(z)$. Taking $w=x-z$ give us the first part. The second part follows as in the proof of Fact \ref{y1}.
\end{proof}

\begin{lemma} There exists an element $x\in K$ satisfying
\begin{itemize}
 \item[i)] $D_{(1,0)}(x)=1,\;\; D_{(0,1)}(x)=\alpha_0 y^{p-1}$,
 \item[ii)] $D_{(p^n,0)}(x)=0$ and $D_{(0,p^n)}(x)=\alpha_n y^{(p-1)p^n}$ for each $0<n\le N$,
 \item[iii)]$D_{(p^n,0)}(x)=D_{(0,p^n)}(x)=0$ for each $N<n<m$.
\end{itemize}
\end{lemma}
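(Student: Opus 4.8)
The plan is to construct $x$ by the same level-by-level correction that produced $y$ in Fact~\ref{y1} and $w$ in Fact~\ref{w}. By Fact~\ref{Dpp-gen} an $F$-derivation is pinned down by the values of the diagonal operators $D_{(p^n,0)}$ and $D_{(0,p^n)}$, $0\le n<m$, so it suffices to realise conditions i)--iii), which prescribe exactly these values on $x$. I would start from the element $w$ of Fact~\ref{w}: it already gives $D_{(1,0)}(w)=1$, $D_{(0,1)}(w)=0$ and $D_{(p^n,0)}(w)=D_{(0,p^n)}(w)=0$ for $0<n<m$. Thus every first-coordinate requirement ($D_{(1,0)}(x)=1$ and $D_{(p^n,0)}(x)=0$) is already met, and only the second-coordinate values $D_{(0,p^n)}(x)$ have to be raised from $0$ to the prescribed targets $t_n$, where $t_n=\alpha_n y^{(p-1)p^n}$ for $0\le n\le N$ and $t_n=0$ for $N<n<m$.

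The induction is on $l$: assuming $D_{(0,p^{l'})}(x)=t_{l'}$ for all $l'<l$, I would replace $x$ by $x-z$ so as to force $D_{(0,p^l)}(x)=t_l$. To protect the work already done, $z$ must be taken in $F_{l-1}\cap C_{(p^l,0)}$: membership in $F_{l-1}$ leaves every $D_{(p^j,0)}$ and $D_{(0,p^j)}$ with $j<l$ untouched (this is the meaning of $F_{l-1}$, see Remark~\ref{Fs}), while the extra condition $z\in C_{(p^l,0)}$ keeps $D_{(p^l,0)}(x)=0$. The correction thus reduces to solving $D_{(0,p^l)}(z)=D_{(0,p^l)}(x)-t_l$ with $z\in F_{l-1}\cap C_{(p^l,0)}$. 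Since $G$ is commutative, $D_{(0,p^l)}$ preserves this field (a subfield by Lemma~\ref{Fsubfields}), and, exactly as in Fact~\ref{y1}, Lemma~\ref{U-simple-comp}.v together with Lemma~\ref{Fkernels} shows that $\partial:=D_{(0,p^l)}|_{F_{l-1}\cap C_{(p^l,0)}}$ satisfies $\partial^{(p)}=0$. Fact~\ref{ZM}.ii then gives $\im\partial=\ker\partial^{(p-1)}$, so the equation is solvable the moment its right-hand side lies in $\ker\partial^{(p-1)}$. The base level $l=0$ (yielding $D_{(0,1)}(x)=\alpha_0 y^{p-1}$) is treated separately in the spirit of Fact~\ref{w}, starting from the $x$ of Fact~\ref{xy1} and correcting inside $C_{(1,0)}$ through $D_{(0,1)}^{\ast}=D_{(0,1)}|_{C_{(1,0)}}$, for which $D_{(0,1)}^{\ast(p)}=0$.

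The hard part will be the solvability check at each level, i.e.\ showing that the right-hand side $D_{(0,p^l)}(x)-t_l$ indeed lies in $\ker\partial^{(p-1)}=\im\partial$. This is exactly where the arithmetic of the chosen group law must be used: the $p$-th-power formula $D_{(0,p^l)}^{(p)}=-\alpha_l D_{(1,0)}+\sum_{n=p}^{p^l}\beta_n D_{(n,0)}$ of Lemma~\ref{U-simple-comp}.v, combined with the explicit action of $D_{(0,p^l)}$ and its iterates on powers of $y$ recorded in Proposition~\ref{y-claim}, has to be matched against the target $t_l=\alpha_l y^{(p-1)p^l}$ so that the $p$-curvature relation forced on $x$ by $D_{(1,0)}(x)=1$ is compatible with the prescribed value. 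I expect this compatibility between the $p$-curvature of $D_{(0,p^l)}$ and the targets $t_l$ to carry the real weight of the argument; granting it at every level, the induction terminates at $l=m-1$ and delivers an $x$ satisfying i), ii) and iii).
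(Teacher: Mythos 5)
Your skeleton is the same as the paper's (a level-by-level correction by elements of $F_{l-1}\cap C_{(p^l,0)}$, using Fact~\ref{ZM}.ii to solve the correction equation), but the proposal has a genuine gap precisely where you say the ``real weight'' lies: you never verify that the right-hand side $D_{(0,p^l)}(x)-\alpha_l y^{(p-1)p^l}$ actually lies in $\ker\partial^{(p-1)}=\im\partial$, you only ``grant it at every level.'' That verification is the actual content of the lemma, and it is not a formality: the paper carries it out by combining the $p$-curvature relation $D_{(0,p^l)}^{(p)}=-\alpha_l D_{(1,0)}+\sum_{n\ge p}\beta_n D_{(n,0)}$ with the computation $D_{(0,1)}^{(p-1)}(y^{p-1})=(p-1)!$ and Wilson's theorem, so that the obstruction equals $\alpha_l\bigl((p-1)!+1\bigr)=0$ (equation (\ref{eq001}) and its analogue in Case~2, where the paper routes the argument through the auxiliary field $W$ and the element $w$ to establish $\ker D_{(1,0)}^{*}\subseteq\im D_{(1,0)}^{*}$). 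Without this computation the lemma is unproved; with a different group law the analogous obstruction would generally be nonzero.

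There is a second, smaller flaw in your bookkeeping. Starting from $w$ of Fact~\ref{w} you claim that ``every first-coordinate requirement is already met'' and that only the values $D_{(0,p^n)}(x)$ need to be raised. But a correction $z\in F_{l'-1}\cap C_{(p^{l'},0)}$ performed at level $l'$ only guarantees $D_{(p^j,0)}(z)=0$ for $j\le l'$; for $l>l'$ the value $D_{(p^l,0)}(z)$ can be nonzero, so the condition $D_{(p^l,0)}(x)=0$ can be destroyed before you reach level $l$. This is why the paper's induction at each level has a Case~1 that first re-zeroes $D_{(p^l,0)}(x)$ (which in turn requires showing $D_{(p^l,0)}(x)\in F_{l-1}$ via the computation (\ref{eq002}) and Proposition~\ref{y-claim}.ii). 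Your induction as stated silently assumes this step away. Both gaps are repairable, but they are exactly the two places where the proof has to do work.
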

\begin{proof}
The proof of item ii) is more complicated, but reasoning is similar to the proof of the point i).
\newline
i) We start with $x\in K$ from Fact \ref{xy1}. If $\alpha_0=0$, we proceed like in the proof of Fact \ref{y1}. Assume $\alpha_0\neq0$,
we need $x'\in C_{(1,0)}$ such that $D_{(0,1)}(x+x')=\alpha_0y^{p-1}$. We have
$$D_{(1,0)}\big(\alpha_0y^{p-1}-D_{(0,1)}(x)\big)=\alpha_0 D_{(1,0)}(y^{p-1})=0,$$
therefore $\alpha_0y^{p-1}-D_{(0,1)}(x)\in\ker D_{(1,0)}\subseteq \ker D_{(1,0)}^{(p-1)}=\im D_{(1,0)}$.
So there exists $z\in K$ such that
$$\alpha_0y^{p-1}-D_{(0,1)}(x)=D_{(1,0)}(z)=-\frac{1}{\alpha_0}D_{(0,1)}^{(p)}(z).$$
The last equality comes from Lemma \ref{U-simple-comp}, since $D_{(0,1)}^{(p)}=-\alpha_0D_{(1,0)}$. Then we can take
$x'=-\frac{1}{\alpha_0}D_{(0,1)}^{(p-1)}(z)$, since:
%the substitution $x\mapsto x+x'$ works fine, for example
$$D_{(1,0)}\big(x-\frac{1}{\alpha_0}D_{(0,1)}^{(p-1)}(z)\big)=1-\frac{1}{\alpha_0}D_{(0,1)}^{(p-1)}\big(D_{(1,0)}(z)\big),$$
but
\begin{IEEEeqnarray}{rCl}\label{eq001}
D_{(0,1)}^{(p-1)}\big(D_{(1,0)}(z)\big) &=& D_{(0,1)}^{(p-1)}\big( \alpha_0y^{p-1}-D_{(0,1)}(x) \big)\\
&=&\alpha_0 D_{(0,1)}^{(p-1)}(y^{p-1})-
D_{(0,1)}^{(p)}(x) \nonumber\\
&=&\alpha_0(p-1)!+\alpha_0 D_{(1,0)}(x)=\alpha_0\big( (p-1)!+1\big)\nonumber
\end{IEEEeqnarray}
and by Wilson's theorem it is equal to $0$.
\
\\
\\
ii) As in the proof of \ref{y1}, we take the maximal $0<l\le N$ such that for every $0<l'<l$
$$D_{(p^{l'},0)}(x)=0,\;\; D_{(0,p^{l'})}(x)=\alpha_{l'} y^{(p-1)p^{l'}}.$$
Case 1., $D_{(p^l,0)}(x)\neq 0$. 
\newline 
Clearly, $D_{(p^l,0)}(x)\in C_{(p^{l'},0)}$ for every $0\le l'<l$. Moreover
for every $0 \le l'<l$

\begin{IEEEeqnarray}{rCl}\label{eq002}
D_{(0,p^{l'})}D_{(p^{l},0)}(x) &=& D_{(p^{l},0)}D_{(0,p^{l'})}(x) \\
 &=& D_{(p^{l},0)}\big(\alpha_{l'}  y^{(p-1)p^{l'}}\big) \nonumber \\
 &=& \alpha_{l'}  D_{(p^{l-l'},0)}(y^{p-1})^{p^{l'}}=0, \nonumber
\end{IEEEeqnarray}
by Proposition \ref{y-claim}.ii) where the last equation follows.
This means that $D_{(p^l,0)}(x)\in F_{l-1}$ and furthermore $D_{(p^l,0)}(x)\in \ker D_{(p^l,0)}|_{F_{l-1}}^{(p-1)}=\im D_{(p^l,0)}|_{F_{l-1}}$.
Hence there exists $z\in F_{l-1}$ such that $D_{(p^l,0)}(x)=D_{(p^l,0)}(z)$ and we replace $x$ with $x-z$.

\noindent
Case 2., $D_{(p^l,0)}(x)=0$ and $D_{(0,p^l)}(x)\neq\alpha_l  y^{(p-1)p^l}$.
\newline
If $\alpha_l=0$ we argue similarly
as many times before (compare also with the proof of item iii)), so let $\alpha_l\neq 0$. The aim of this part is to find an element $x'\in F_{l-1}\cap C_{(p^l,0)}$ such that
$$D_{(0,p^l)}(x+x')=\alpha_l y^{(p-1)p^l}.$$
We introduce 
$$W:=C_{(0,1)}\cap C_{(p^l,0)} \cap \bigcap\limits_{1\le l'<l} C_{(p^{l'},0)} \cap C_{(0,p^{l'})}.$$
Note that the element $w$ from Fact \ref{w} satisfies $w\in W\backslash\ker D_{(1,0)}^{*}$, where $D_{(1,0)}^{*}:=D_{(1,0)}|_W$.
\\
\textbf{Claim.} $\ker D_{(1,0)}^{*}\subseteq\im D_{(1,0)}^{*}$.

\begin{proof}[Proof of the claim]
Note that $W_0:=W\cap C_{(1,0)}=F_{l-1}\cap C_{(p^l,0)}=\ker D_{(1,0)}^{*}$ is a subfield of $K$ (by Lemma \ref{Fsubfields}). 
Using Lemma \ref{Fkernels} we obtain that $W$ is a vector space over $W_0$. Now take $a\in W$
such that $D_{(1,0)}^{*}(a)=0$, that means $a\in W_0$. The element $a\cdot w$ belongs to $W$ and moreover
$D_{(1,0)}^{*}(a w)=a D_{(1,0)}^{*}(w)=a$, so $a\in\im D_{(1,0)}^{*}$.
\end{proof}
\noindent
It is not to hard to see that
$$\alpha_l y^{(p-1)p^l}-D_{(0,p^l)}(x)\in F_{l-1}.$$
Moreover, since $D_{(1,0)}(y)=0$, we have
$$D_{(p^l,0)}(\alpha_l y^{(p-1)p^l})=\alpha_l D_{(1,0)}(y^{(p-1)})^{p^l}=\alpha_l (p-1)\big(y^{p-2} D_{(1,0)}(y)\big)^{p^l}=0.$$
We conclude that
$$\alpha_l y^{(p-1)p^l}-D_{(0,p^l)}(x)\in F_{l-1}\cap C_{(p^l,0)} = W\cap C_{(1,0)}.$$
In other words $\alpha_l y^{(p-1)p^l}-D_{(0,p^l)}(x)\in\ker D_{(1,0)}^{*}\subseteq\im D_{(1,0)}^{*}$, and there is
$z\in W$ such that
$$\alpha_l y^{(p-1)p^l}-D_{(0,p^l)}(x) = D_{(1,0)}(z).$$
From Lemma \ref{U-simple-comp} we know that
$$D_{(0,p^l)}^{(p)}=-\alpha_{l}D_{(1,0)}+\sum\limits_{n=p}^{p^l}\beta_n D_{(n,0)},$$
for some $\beta_n\in k$.
By Remark \ref{claim-rem1} for every $p\le i\le p^l$ $D_{(i,0)}|_W=0$, consequently $D_{(1,0)}(z)=-\frac{1}{\alpha_l}D_{(0,p^l)}^{(p)}(z)$.
For $x'$ take $-\frac{1}{\alpha_l}D_{(0,p^l)}^{(p-1)}(z)$, only an argument for $D_{(0,p^l)}^{(p-1)}(z)\in C_{(1,0)}$ is missing, and it is straightforward
modification of the equation (\ref{eq001}).
\
\\
\\
iii) It follows the proof of Fact \ref{y1}, we need to check only that $D_{(p^l,0)}(x),D_{(0,p^l)}(x)\in F_{l-1}$ for $l>N$.
Obviously, $D_{(p^l,0)}(x),D_{(0,p^l)}(x)\in \bigcap\limits_{l'=0}^{l-1} C_{(p^{l'},0)}\cap \bigcap\limits_{l'=N+1}^{l-1} C_{(0,p^{l'})}$.
Let $0\le l'\le N$, then
$$D_{(p^l,0)}D_{(0,p^{l'})}(x)=D_{(p^l,0)}\big(\alpha_{l'}y^{(p-1)p^{l'}}\big)=\alpha_{l'}D_{(p^{l-l'},0)}(y^{p-1})^{p^{l'}}=0,$$
as in (\ref{eq002}). Furthermore,
$$D_{(0,p^l)}D_{(0,p^{l'})}(x)=D_{(0,p^l)}\big(\alpha_{l'}y^{(p-1)p^{l'}}\big)=\alpha_{l'} D_{(0,p^{l-l'})}(y^{p-1})^{p^{l'}},$$
so we are done if $D_{(0,p^{l-l'})}(y^{p-1})=0$, which is a part of Proposition \ref{y-claim}.
\end{proof}
We fix $x\in K$ as in the above fact.

\begin{lemma}\label{xi}
We have the following
 \begin{itemize}
  \item[i)] for all $n>1$ $D_{(n,0)}(x)=0$,
  \item[ii)] for all $n>0$
  $$D_{(0,n)}(x)=\begin{cases} \alpha_l\lambda_i y^{(p-i)p^l} &\mbox{if } 0\le l\le N,\;\; 1\le i <p,\;\; n=ip^l,
			      \\ 0 &\mbox{otherwise.}
                 \end{cases}$$
 \end{itemize}
\end{lemma}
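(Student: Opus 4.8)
The plan is to compute $D_{(n,0)}(x)$ and $D_{(0,n)}(x)$ for all $n$ by reducing everything to the ``base-$p$ digit'' values $D_{(p^l,0)}(x)$ and $D_{(0,p^l)}(x)$, which were pinned down in the preceding fact. The main tool is Fact \ref{Newton-comp}, exactly as in Proposition \ref{y-claim}, together with the fact (Lemma \ref{U-simple-comp}.i)) that the two one-parameter directions commute and that $D_{(i,j)}=D_{(i,0)}D_{(0,j)}$. So the strategy mirrors the proof of Proposition \ref{y-claim} almost line by line, but now the ``inhomogeneous'' values $D_{(0,p^l)}(x)=\alpha_l y^{(p-1)p^l}$ propagate through, instead of vanishing as they did for $y$.

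First I would dispose of item i). Since $D_{(p,0)}(x)=0$ (this is the case $n=1$ of the preceding fact's item ii)/iii)), Remark \ref{claim-rem1} immediately gives $D_{(n,0)}(x)=0$ for every $p\le n$. For $1<n<p$ one uses the standard $\mathbb{G}_a$-iterativity in the first coordinate (Lemma \ref{U-simple-comp}.ii)): writing $\binom{n}{1}D_{(n,0)}=D_{(n-1,0)}D_{(1,0)}$ up to lower-order terms and inducting on $n$, the hypothesis $D_{(1,0)}(x)=1\in C_K$ forces $D_{(n,0)}(x)=0$ for $1<n<p$ as well.

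For item ii) I would induct on $n$, splitting $n=\beta_0+\beta_1 p+\dots+\beta_s p^s$ in base $p$. By Fact \ref{Newton-comp} applied in the second coordinate,
\begin{equation*}
 D_{(0,n-p^s)}D_{(0,p^s)}=\gamma_s D_{(0,n)}+\mathcal{O}(D_{(i,j)})_{\substack{0<i+j<n\\ i\neq 0}},
\end{equation*}
and since item i) kills every term with $i\neq 0$ when evaluated at $x$, this expresses $\beta_s D_{(0,n)}(x)$ via the lower values $D_{(0,n-p^s)}(x)$ and $D_{(0,p^s)}(x)$. When $n=ip^l$ is a single digit times a power of $p$ ($1\le i<p$), the recursion reduces $D_{(0,ip^l)}(x)$ to the value $D_{(0,p^l)}(x)=\alpha_l y^{(p-1)p^l}$ acted on by $D_{(0,(i-1)p^l)}$; using the Leibniz-type formula for $D_{(0,\cdot)}$ on a power of $y$ and Proposition \ref{y-claim} to control $D_{(0,\cdot)}(y^{p-1})$, one recovers the stated value $\alpha_l\lambda_i y^{(p-i)p^l}$. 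When $n$ has two or more nonzero base-$p$ digits, the same recursion expresses $D_{(0,n)}(x)$ as a product of the single-digit contributions, and a degree/vanishing check shows such cross terms evaluate to $0$, giving the ``otherwise'' case.

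The main obstacle will be the bookkeeping in this last point: verifying that the $\mathcal{O}$-terms and the multi-digit products collapse to zero, which requires carefully matching the coefficients $\lambda_i$ coming from $H_n$ against the values of $D_{(0,\cdot)}$ on powers of $y$ supplied by Proposition \ref{y-claim}. The identity $D_{(0,l)}(y^s)=yD_{(0,l)}(y^{s-1})+D_{(0,l-1)}(y^{s-1})$ from that proof is what makes the single-digit computation come out to exactly $\alpha_l\lambda_i y^{(p-i)p^l}$, so I expect the crux to be organizing the induction so that each $D_{(0,n)}(x)$ is expressed purely in terms of already-known lower values before the binomial and $\lambda$-coefficients are compared.
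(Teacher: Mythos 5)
Your overall strategy coincides with the paper's: item i) via Remark \ref{claim-rem1} and the $\mathbb{G}_a$-iterativity of the first coordinate, and item ii) by induction on $n$ through the base-$p$ expansion using Fact \ref{Newton-comp}, with the single-digit case $n=ip^l$ reduced to $D_{(0,p^l)}(x)=\alpha_l y^{(p-1)p^l}$ and the multi-digit case shown to vanish. However, there is one step in your item ii) that is wrong as stated: you claim that ``item i) kills every term with $i\neq 0$'' in the error expression $\mathcal{O}(D_{(i,j)})_{0<i+j<n,\ i\neq 0}$. This fails for $i=1$. First, the term $D_{(1,0)}$ itself is \emph{not} killed, since $D_{(1,0)}(x)=1$; one must instead show that $D_{(1,0)}$ does not occur in the expansion of $D_{(0,n-p^s)}D_{(0,p^s)}$ at all. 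This requires a separate degree count on the group law: the correction $S_1=\sum_n\alpha_n H_n(v_2,w_2)$ contributes only monomials of total degree $p^{n'+1}$, so a nonzero coefficient on $D_{(1,0)}$ would force $n=p^{s+1}$, which is impossible because $n<p^{s+1}$. Second, the terms $D_{(1,j)}$ with $j>0$ are not handled by item i) either; they vanish because the inductive hypothesis gives $D_{(0,j)}(x)=\beta y^r$ with $r>0$ and $D_{(1,0)}(y)=0$, so $D_{(1,0)}D_{(0,j)}(x)=0$. Only the terms with $i>1$ follow from item i) via $D_{(i,j)}=D_{(0,j)}D_{(i,0)}$.

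These two points are exactly the content of the paper's ``Claim 1,'' and they are not mere bookkeeping: the exclusion of the $D_{(1,0)}$ term uses the specific shape of $H_n$, and the $D_{(1,j)}$ terms need the inductive control of $D_{(0,j)}(x)$ as a power of $y$ \emph{before} the recursion can be closed. With those two arguments supplied, the rest of your outline (the single-digit computation via $\lambda_{i+1}=\tfrac{p-i}{i+1}\lambda_i$ and the vanishing of multi-digit terms, which the paper gets from $y^{p^s}\in F_{s-1}$ and Lemma \ref{Fkernels}) goes through as in the paper.
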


\begin{proof}
 The item i) for $n\ge p$ follows from Remark \ref{claim-rem1}, and for $1<n<p$ from Lemma \ref{U-simple-comp} and $D_{(1,0)}(x)=1$.
 To prove the item ii) we argue inductively.
 Note that $\lambda_1=1$, so
 for $n=1$ it is clear. Assume that for every $n'<n$ our thesis is true. Let 
 $n=\gamma_0+\gamma_1p+\ldots+\gamma_sp^s$, where $\gamma_0,\ldots,\gamma_s<p$ and $\gamma_s\neq 0$.
\\
\textbf{Claim 1.} 
 \begin{equation}\label{gam01}
  D_{(0,n-p^s)}D_{(0,p^s)}(x)=\gamma_sD_{(0,n)}(x),
 \end{equation}
 \begin{equation}\label{gam02}
  D_{(0,n-\gamma_sp^s)}D_{(0,\gamma_sp^s)}(x)=D_{(0,n)}(x).
 \end{equation}
\begin{proof}[Proof of the claim 1.]
Both equations have similar proofs, so we consider only the first one.
We start with the equation (\ref{y-comp}) for $j_1=p^s$ and $j_s=n-p^s$:
$$ 
D_{(0,n-p^s)}D_{(0,p^s)}=\gamma_s D_{(0,n)}+ \mathbf{r}(D_{(i,j)})_{\substack{0<i+j<n \\  i\neq 0}}.
$$
Our aim is to show that
$D_{(i,j)}(x)=0$ for $0<i+j<n$, $i\neq 0$.
The component with $D_{(1,0)}$ ($i=1$ and $j=0$) does not occur. To see this, we compare the sides of the equation from the iterativity
definition for our choosen iterativity rule, where on the left-hand side we focus on $D_{(0,n-p^s)}D_{(0,p^s)}$. A non-zero component with $D_{(1,0)}$
implies that $n=p^{s+1}$ and this is impossible.
Let us assume that $i=1$ and $j>0$. Because of $j<n$, $D_{(0,j)}(x)$ is, due to the inductive assumption, equal to $\beta y^r$ for some $\beta\in k$ and $r>0$,
and then $D_{(1,0)}D_{(0,j)}(x)=D_{(1,0)}(\beta y^r)=0$. If $i>1$, then $D_{(i,j)}(x)=D_{(0,j)}D_{(i,0)}(x)=0$.
\end{proof}
\noindent
\textbf{Claim 2.} For every $0\le l\le N$ and $0< i<p$ we have $D_{(0,ip^l)}(x)=\alpha_l\lambda_iy^{(p-i)p^l}$.
\begin{proof}[Proof of the claim 2.]
It is quite an obvious induction, using claim 1.:
\begin{IEEEeqnarray*}{rCl}
D_{(0,(1+i)p^l)}(x) &=& \frac{1}{i+1}D_{(0,p^l)}D_{(0,ip^l)}(x)=\frac{1}{i+1}D_{(0,p^l)}\big( \alpha_l\lambda_iy^{(p-i)p^l} \big) \\
 &=& \frac{\alpha_l\lambda_i}{i+1}D_{(0,1)}(y^{p-i})^{p^l}=\frac{\alpha_l\lambda_i}{i+1}(p-i)y^{(p-i-1)p^l} \\
 &=& \alpha_l\lambda_{i+1}y^{(p-i-1)p^l}.
\end{IEEEeqnarray*}
\end{proof}

\noindent
Now we are going to the proof of the main induction step. We will deal with several cases.
If $s>N$ , then $D_{(0,p^s)}(x)=0$ and the equation (\ref{gam01}) implies that $D_{(0,n)}(x)=0$.
We can assume that $s\le N$ and $n-\gamma_sp^s\neq 0$ (otherwise we apply claim 2), 
$$D_{(0,n)}(x)=D_{(0,n-\gamma_sp^s)}D_{(0,\gamma_sp^s)}(x)=\alpha_s\lambda_{\gamma_s}D_{(0,n-\gamma_sp^s)}\big( (y^{p-\gamma_s})^{p^s} \big).$$
Recall that for every $a\in K$ the element $a^{p^s}$ belongs to $F_{s-1}$, thus by Lemma \ref{Fkernels}
$D_{(0,n-\gamma_sp^s)}(a^{p^s})=0$.
\end{proof}
We show below that fixed element $x$ satisfies the required properties.

\begin{prop}\label{xfinal}
 $$D_{(i,j)}(x)=\begin{cases} x &\mbox{if } (i,j)=(0,0),
			      \\ 1 &\mbox{if } (i,j)=(1,0),
			      \\ \alpha_l\lambda_i y^{(p-i)p^l} &\mbox{if } (i,j)=(0,ip^l), 0\le l\le N, 1\le i<p,
			      \\ 0 &\mbox{otherwise.}
                 \end{cases}$$
\end{prop}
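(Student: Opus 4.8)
The plan is to compute $D_{(i,j)}(x)$ by splitting into three regimes — $j=0$, $i=0$, and both $i,j>0$ — and to reduce everything to Lemma \ref{xi} and Proposition \ref{y-claim}. The two distinguished entries are immediate: since $D_{(0,0)}=\id$ we get $D_{(0,0)}(x)=x$, and $D_{(1,0)}(x)=1$ holds by the defining property of the fixed element $x$. The pure cases $(n,0)$ with $n>1$ and $(0,n)$ with $n>0$ are precisely the content of Lemma \ref{xi}: part i) yields $D_{(n,0)}(x)=0$ for $n>1$, while part ii) yields the value $\alpha_l\lambda_i y^{(p-i)p^l}$ exactly when $n=ip^l$ with $0\le l\le N$ and $1\le i<p$, and $0$ otherwise. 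Together these account for every branch of the claimed formula except for the mixed pairs.

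It therefore remains to show $D_{(i,j)}(x)=0$ whenever both $i>0$ and $j>0$; note that all such pairs land in the ``otherwise'' branch, since each distinguished case has $i=0$ or $j=0$. First I would factor using Lemma \ref{U-simple-comp}.i), writing $D_{(i,j)}(x)=D_{(i,0)}\big(D_{(0,j)}(x)\big)$. By Lemma \ref{xi}.ii), the inner value $D_{(0,j)}(x)$ is either $0$, in which case we are done, or a scalar multiple of a power of $y$, namely $\alpha_l\lambda_{i'}\,y^{(p-i')p^l}$ for the unique $0\le l\le N$ and $1\le i'<p$ with $j=i'p^l$. As each $D_{(i,0)}$ is $k$-linear, the task collapses to proving $D_{(i,0)}\big(y^{(p-i')p^l}\big)=0$ for $i>0$.

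The hard part, though only a short computation, is this final vanishing, where the interaction between the Frobenius and Proposition \ref{y-claim}.ii) is used. Writing $y^{(p-i')p^l}=(y^{p-i'})^{p^l}$ and applying the $p$-th power formula $D_{(i,0)}(a^p)=D_{(i/p,0)}(a)^p$ (zero unless $p\mid i$) iteratively $l$ times, I would obtain that $D_{(i,0)}\big((y^{p-i'})^{p^l}\big)$ vanishes unless $p^l\mid i$, in which case it equals $D_{(i/p^l,0)}(y^{p-i'})^{p^l}$. Since $i>0$ forces $i/p^l\ge 1$ and since $1\le p-i'\le p-1$, Proposition \ref{y-claim}.ii) gives $D_{(i/p^l,0)}(y^{p-i'})=0$. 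Hence $D_{(i,0)}\big(y^{(p-i')p^l}\big)=0$, so $D_{(i,j)}(x)=0$ for every mixed pair, and all branches of the formula are verified.
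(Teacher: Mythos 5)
Your proof is correct, and it rests on the same two pillars as the paper's — the factorization of Lemma \ref{U-simple-comp}.i) together with Lemma \ref{xi} and Proposition \ref{y-claim} — but you factor in the opposite order, which changes where the work lands. The paper writes $D_{(i,j)}=D_{(0,j)}\circ D_{(i,0)}$ and applies the inner operator to $x$ first: since $D_{(i,0)}(x)$ is $1$ for $i=1$ and $0$ for $i>1$, the mixed case collapses immediately to $D_{(0,j)}(1)=0$, with no further computation. You instead write $D_{(i,j)}=D_{(i,0)}\circ D_{(0,j)}$, so the inner value is a scalar multiple of $y^{(p-i')p^l}$, and you must then show $D_{(i,0)}$ kills it; your argument for this — peeling off Frobenius powers via the fact $D_{(i,0)}(a^p)=D_{(i/p,0)}(a)^p$ (zero unless $p\mid i$) and then invoking Proposition \ref{y-claim}.ii) — is valid, just slightly longer than necessary. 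Both routes are sound; the paper's ordering is the more economical one because $x$ has the simpler behaviour under the first family of operators, while yours has the minor virtue of exercising Proposition \ref{y-claim}.ii) directly rather than only through Lemma \ref{xi}.
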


\begin{proof}
 By Lemma \ref{U-simple-comp}.i) we decompose $D_{(i,j)}$ into $D_{(0,j)}D_{(i,0)}$. For $i\ge p$ Remark \ref{claim-rem1} and $D_{(p^n,0)}(x)=0$,
 where $0<n<m$, ensure us that $D_{(i,0)}(x)=0$, thus also $D_{(i,j)}(x)=0$. For $1<i<p$ Remark \ref{claim-rem1}.ii) used in an inductive argument
 give $D_{(i,0)}(x)=0$. If $i=1$, then $D_{(i,j)}(x)=D_{(j,0)}(1)$. Hence $D_{(i,j)}(x)\neq 0$ if and only if $j=0$. The case with $i=0$ is exactly
 Lemma \ref{xi}.
\end{proof}

By propositions \ref{y-claim} and \ref{xfinal} the pair $\lbrace x,y\rbrace$ is a canonical $G$-basis (see the beginning of Section \ref{subnewex1}).
Thus we end with the following:

\begin{cor}
 For $G$ as defined above, any $m\in\mathbb{N}_{>0}$ and any $G[m]$-field $(K,D[m])$ such that $[K:C_K]=p^2$, there is a canonical $G$-basis in $K$.
\end{cor}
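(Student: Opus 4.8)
The plan is to observe that the corollary is now essentially a matter of bookkeeping: the analytic work has already been carried out in Fact \ref{y1}, Proposition \ref{y-claim}, Fact \ref{w}, Lemma \ref{xi} and Proposition \ref{xfinal}, and what remains is to check that the pair $\{x,y\}$ fixed above meets the three requirements of Definition \ref{defbasis}. I would begin by recalling that, in the present concrete situation $k(G)=k(X_1,X_2)$ with $\hat G=F$, a canonical $G$-basis is exactly a pair $\{x,y\}\subseteq K$ that is $p$-independent over $C_K$ and for which the assignment $X_1\mapsto x$, $X_2\mapsto y$ extends to an embedding of $G[m]$-fields $(k(X_1,X_2),\mathbb{D}^G[m])\to(K,\mathbb{D})$. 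By Example \ref{exDG} and the explicit form of $\ev_F$ recalled at the start of this subsection, the embedding condition amounts precisely to the two generating-series identities
\[
\sum_{i,j}D_{(i,j)}(y)\,v_1^iv_2^j=y+v_2,\qquad
\sum_{i,j}D_{(i,j)}(x)\,v_1^iv_2^j=x+v_1+\sum_{n=0}^{N}\alpha_n\sum_{i=1}^{p-1}\lambda_i\,y^{ip^n}v_2^{(p-i)p^n}.
\]

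Second, I would verify the three bullet points in turn. The cardinality condition $|B|=e=2$ is immediate. For $p$-independence over $C_K$, I would read off the leading values $D_{(1,0)}(x)=1$, $D_{(1,0)}(y)=0$ and $D_{(0,1)}(y)=1$ from Propositions \ref{xfinal} and \ref{y-claim}; as already noted after the list of defining conditions, Fact \ref{kombajn} then forces $[C_K(x,y):C_K]=p^2$, so $\{x,y\}$ is $p$-independent over $C_K$. Finally, the embedding condition is literally the content of Proposition \ref{y-claim} (for $y$) together with Lemma \ref{xi} and Proposition \ref{xfinal} (for $x$): their derivative tables assemble into exactly the two generating series displayed above, so $X_1\mapsto x$, $X_2\mapsto y$ does define the required $G[m]$-field embedding, and $\{x,y\}$ is the image of the set of canonical parameters.

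The genuinely hard part lies upstream, in the construction of $x$ and $y$ rather than in this assembly; were I attacking the statement from scratch, the main obstacle would be producing elements whose prescribed values $D_{(p^n,0)}$ and $D_{(0,p^n)}$ can be realized \emph{simultaneously} and compatibly with the $y$-dependent targets $\alpha_n y^{(p-1)p^n}$. The device is to build $x$ and $y$ by successive corrections, at each stage subtracting an element of the appropriate constant subfield $F_{l-1}$ (or $F_{l-1}\cap C_{(p^l,0)}$, or the auxiliary space $W$), which is available because Fact \ref{ZM}.ii) identifies $\ker\partial^{(p-1)}$ with $\im\partial$ for a $p$-nilpotent derivation $\partial$, while Lemma \ref{U-simple-comp} supplies the needed $p$-th-power composition formulas (notably $D_{(0,1)}^{(p)}=-\alpha_0 D_{(1,0)}$ and its higher analogues). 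The reason controlling only the $D_{(p^n,0)}$ and $D_{(0,p^n)}$ suffices is that standard iterativity in the first variable, together with Fact \ref{Newton-comp} and Lemma \ref{Fkernels}, then propagates these values to all $D_{(i,j)}$, which is exactly what Propositions \ref{y-claim} and \ref{xfinal} record.
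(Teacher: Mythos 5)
Your proposal is correct and follows the paper's own route exactly: the paper disposes of this corollary with the single observation that, by Propositions \ref{y-claim} and \ref{xfinal}, the pair $\{x,y\}$ constructed in the preceding subsubsections satisfies the defining conditions listed at the start of Section \ref{subnewex1}, which is precisely the assembly you carry out (including the use of Fact \ref{kombajn} to get $[C_K(x,y):C_K]=p^2$ from the leading derivative values). Your additional remarks about where the real difficulty lies correctly locate it in the upstream correction arguments rather than in this final bookkeeping step.
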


\subsection{Canonical $G$-bases for commutative and connected groups}
In the previous subsection we showed the existence of a canonical $G$-basis for every $G[m]$-field $(K,\mathbb{D})$ such that $[K:C_K]=p^e$, where $G$
was very specific. Now we are going to apply those results to a more general class of algebraic groups.

\begin{definition}
 Let $G$ be an algebraic group over $k$
 \begin{enumerate}
  \item We call $G$ \emph{integrable} if for any $m\in\mathbb{N}_{>0}$, every $G[m]$-derivation on a field $K$ such that $[K:C_K]=p^{\dim G}$
 is strongly integrable. 
  \item If for any $m\in\mathbb{N}_{>0}$, every $G[m]$-field $K$ such that $[K:C_K]=p^{\dim G}$ has a canonical $G$-basis, we call $G$ \emph{canonically integrable}.
 \end{enumerate}
\end{definition}
By Theorem \ref{integrable} each canonically integrable algebraic group is integrable.

\begin{lemma}\label{product}
 Let $G$ and $H$ be algebraic groups over $k$. If both are canonically integrable, then also $G\times H$ is canonically integrable.
\end{lemma}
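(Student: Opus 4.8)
The plan is to reduce to a single $(G\times H)[m]$-field $(K,\mathbb{D})$ with $[K:C_K]=p^{e}$, where $e=e_G+e_H$, $e_G=\dim G$, $e_H=\dim H$, and to produce a canonical $(G\times H)$-basis as a disjoint union of a canonical $G$-basis and a canonical $H$-basis. Since $\widehat{G\times H}=\hat G\times \hat H$, choosing the local parameters of $G\times H$ at the identity to be the union of those of $G$ and $H$ splits the group law as $F=(F_G,F_H)$. First I would set $\mathbb{D}^G:=(D_{(\mathbf{i}_G,\mathbf{0})})$ and $\mathbb{D}^H:=(D_{(\mathbf{0},\mathbf{i}_H)})$ and check, by restricting the $F$-iterativity diagram (setting the complementary block of variables to $0$ and using $F_G(\bar v_G,\bar 0)=\bar v_G$, $F_H(\bar 0,\bar w_H)=\bar w_H$), that these are a $G[m]$- and an $H[m]$-derivation on $K$, that $D_{(\mathbf{i}_G,\mathbf{i}_H)}=D^G_{\mathbf{i}_G}\circ D^H_{\mathbf{i}_H}=D^H_{\mathbf{i}_H}\circ D^G_{\mathbf{i}_G}$, and that $C_K=C_K^G\cap C_K^H$. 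In particular the two blocks commute, so each $D^G_{\mathbf{j}}$ preserves every kernel $\ker D^H_{\mathbf{i}}$ and vice versa (this uses only that $G\times H$ is a direct product, not commutativity of the factors).

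Next I would pin down the degrees. As $\mathbb{D}^H$ preserves $C_K^G$, the pair $(C_K^G,\mathbb{D}^H|_{C_K^G})$ is an $H[m]$-field with constants $C_K^G\cap C_K^H=C_K$, so Corollary \ref{lin_disjoint} shows $C_K^G$ and $C_K^H$ are linearly disjoint over $C_K$. Combining $[K:C_K^G]\le p^{e_G}$ and $[K:C_K^H]\le p^{e_H}$ (Lemma \ref{dimconstants}) with $[C_K^G:C_K]\,[C_K^H:C_K]=[C_K^G C_K^H:C_K]\le p^{e}$ forces $[K:C_K^G]=p^{e_G}$, $[K:C_K^H]=p^{e_H}$, $[C_K^G:C_K]=p^{e_H}$, $[C_K^H:C_K]=p^{e_G}$, and $C_K^G C_K^H=K$.

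The key point is that any canonical $(G\times H)$-basis is \emph{forced} to consist of $e_G$ elements lying in $C_K^{H,\mathrm{ab}}$ together with $e_H$ elements in $C_K^{G,\mathrm{ab}}$: applying the product canonical derivation to a pure-$G$ parameter shows all its $H$-derivatives vanish. So I would apply canonical integrability of $G$ not to $K$ but to the $G[m]$-field $(C_K^{H,\mathrm{ab}},\mathbb{D}^G|)$, whose $G$-constants are $C_K^{H,\mathrm{ab}}\cap C_K$. To do this legitimately I must prove $[C_K^{H,\mathrm{ab}}:C_K^{H,\mathrm{ab}}\cap C_K]=p^{e_G}$, and this is the main obstacle. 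I would get it by descending the $H$-filtration $K=F^H_{-1}\supseteq F^H_0=C_K^H\supseteq\cdots\supseteq F^H_{m-1}=C_K^{H,\mathrm{ab}}$, each $F^H_s$ being a $G[m]$-field by commutation, and showing inductively that $[F^H_s:F^H_s\cap C_K]=p^{e_G}$: the base $s=0$ is $[C_K^H:C_K]=p^{e_G}$, and the step applies Corollary \ref{lin_disjoint} to $(F^H_{s+1},\mathbb{D}^G)\subseteq(F^H_s,\mathbb{D}^G)$ together with $[F^H_s\cap C_K:F^H_{s+1}\cap C_K]=p^{e_H}=[F^H_s:F^H_{s+1}]$ (Remark \ref{remclaim1} for the maximal $H$-field $C_K^G$ and for $\mathbb{D}^H$ on $K$), which yields $F^H_{s+1}\cdot(F^H_s\cap C_K)=F^H_s$ and hence preserves the index $p^{e_G}$. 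Canonical integrability of $G$ then gives an embedding $\alpha\colon(k(G),\mathbb{D}^{G}[m])\hookrightarrow(C_K^{H,\mathrm{ab}},\mathbb{D}^G[m])$ with a canonical $G$-basis $B_G\subseteq C_K^{H,\mathrm{ab}}$, and symmetrically $\beta\colon k(H)\hookrightarrow C_K^{G,\mathrm{ab}}$ with $B_H\subseteq C_K^{G,\mathrm{ab}}$.

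Finally I would glue. Since $B_G\subseteq C_K^H$ and $B_H\subseteq C_K^G$ are each $p$-independent over $C_K$, and $C_K^G,C_K^H$ are linearly disjoint over $C_K$, the union $B_G\cup B_H$ is a $p$-basis of $K$ over $C_K$; in particular it is algebraically independent over $k$, so $\alpha(k(G))$ and $\beta(k(H))$ are linearly disjoint over $k$ (exactly as in the proof of Theorem \ref{integrable}), and the multiplication map $\mu\colon k(G)\otimes_k k(H)\to K$, $v\otimes w\mapsto \alpha(v)\beta(w)$, is injective and extends to $k(G\times H)$. Because $\alpha(k(G))\subseteq C_K^{H,\mathrm{ab}}$ and $\beta(k(H))\subseteq C_K^{G,\mathrm{ab}}$, a short computation with $\mathbb{D}^H(\alpha(v))=\alpha(v)$, $\mathbb{D}^G(\beta(w'))=\beta(w')$ and $D_{(\mathbf{i}_G,\mathbf{i}_H)}=D^G_{\mathbf{i}_G}D^H_{\mathbf{i}_H}$ gives $D_{(\mathbf{i}_G,\mathbf{i}_H)}(\alpha(v)\beta(w))=\alpha(D^{G}_{\mathbf{i}_G}v)\,\beta(D^{H}_{\mathbf{i}_H}w)$, i.e.\ $\mu$ intertwines $\mathbb{D}$ with the product of the canonical $G$- and $H$-derivations, which is the canonical $(G\times H)$-derivation since $\widehat{G\times H}=\hat G\times\hat H$. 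Uniqueness of the $0$-étale extension (Lemma \ref{smooth2}) upgrades this to the fraction field, so $B_G\cup B_H$ is the required canonical $(G\times H)$-basis. Everything apart from the third paragraph is the linear-disjointness toolkit developed above and a routine intertwining computation, so the filtration induction showing that the absolute-constant subfields stay of maximal degree for the complementary derivation is where the real work lies.
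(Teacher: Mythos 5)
Your proposal is correct and follows essentially the same route as the paper: split $\mathbb{D}$ into commuting $G$- and $H$-blocks, locate a canonical $G$-basis inside the absolute constants of the $H$-block (and vice versa) after verifying via the constant-field filtration and Corollary \ref{lin_disjoint} that the degree hypothesis $[C_K^{H,\mathrm{ab}}:C_K^{H,\mathrm{ab}}\cap C_K]=p^{\dim G}$ holds, then glue the two bases. Your inductive linear-disjointness argument replaces the paper's single degree count $[K:C_K^{\mathrm{ab}}]=p^{m(e_1+e_2)}$ along the combined tower, and your final intertwining computation spells out the verification the paper leaves to the reader, but these are presentational rather than substantive differences.
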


\begin{proof}
 Introduce $A:=G\times H$, $e_1:\dim G$, $e_2:=\dim H$ and let $(K,\mathbb{D})$ be an $A[m]$-field such that $[K:C_K]=p^{e_1+e_2}$.
 We define
 $$\mathbb{D}':=(D'_{(j_1,\ldots,j_{e_1})}:=D_{(j_1,\ldots,j_{e_1},\underbrace{0,\ldots,0}_{e_2\text{ times}})})_{j_1,\ldots,j_{e_1}<p^m},$$
 $$\mathbb{D}^{\prime\prime}:=(D^{\prime\prime}_{(j_{e_1+1},\ldots,j_{e_1+e_2})}:=D_{(\underbrace{0,\ldots,0}_{e_1\text{ times}},j_{e_1+1},\ldots,j_{e_1+e_2})})_{j_{e_1+1},\ldots,j_{e_1+e_2}<p^m}.$$
 From the $A[m]$-iterativity diagram (see Definition \ref{defiterativ})it follows that 
 $$D_{(j_1,\ldots,j_{e_1+e_2})}=D'_{(j_1,\ldots,j_{e_1})}D^{\prime\prime}_{(j_{e_1+1},\ldots,j_{e_1+e_2})},$$
 $\mathbb{D}'$ is $G[m]$-iterative and the second one, $\mathbb{D}^{\prime\prime}$ is $H[m]$-iterative.
 \
 \\
 Taking any $p$-basis of $K$ over $C_K$ and using Remark \ref{remclaim1} assures us that for every $s<m$
 $\;\;[F_{s-1}:F_s]=p^{e_1+e_2}$. Thus $[K:C_K^{\text{abs}}]=p^{m(e_1+e_2)}$.
 For $s<m$ we introduce
 $$F'_s:=\bigcap\limits_{j=0}^{s} C_{(p^j,0,\ldots,0)}\cap\ldots\cap C_{(0,\ldots,0,p^j,\underbrace{0,\ldots,0}_{e_2\text{ times}})}, \qquad
 F'_{-1}:=K,$$
 $$F''_s:=\bigcap\limits_{j=0}^{s} C_{(\underbrace{0,\ldots,0}_{e_1\text{ times}},p^j,0\ldots,0)} \cap\ldots\cap
 C_{(0,\ldots,0,p^j)},\qquad F''_{-1}:=K.$$
 Consider the following tower of subfields
 $$K\supseteq F'_0 \supseteq F'_1 \supseteq \ldots \supseteq F'_{m-1}\supseteq F'_{m-1}\cap F''_0\supseteq F'_{m-1}\cap F''_1\supseteq
 \ldots F'_{m-1}\cap F''_{m-1}.$$
 For every $s<m$, due to Lemma \ref{dimconstants} and $[K:C_K^{\text{abs}}]=p^{m(e_1+e_2)}$, we have
 $$[F'_{s-1}:F'_s]=p^{e_1},\qquad [F'_{m-1}\cap F''_{s-1}: F'_{m-1}\cap F''_s]=p^{e_2}.$$
 In particular
 $$[F'_{m-1}:F'_{m-1}\cap F''_0]=p^{e_2},$$
 so there exists a canonical $H[m]$-basis $\lbrace \beta_1,\ldots, \beta_{e_2}\rbrace$ of $(F'_{m-1},\mathbb{D}'')$.
 Analogously, there exists a canonical $G[m]$-basis $\lbrace b_1,\ldots, b_{e_1}\rbrace$ of $(F''_{m-1},\mathbb{D}')$.
 Elements $\beta_1,\ldots,\beta_{e_2}$ are $p$-independent in $F'_{m-1}$ over $F'_{m-1}\cap F''_0$. By Corollary \ref{lin_disjoint},
 they are also $p$-independent in $K$ over $F''_0$. Similarly for elements $b_1,\ldots, b_{e_1}$, Corollary \ref{lin_disjoint}
 implies that they are $p$-independent in $F''_0$ over $F''_0\cap F'_0$. We have
 $$[F''_0:F''_0\cap F'_0]\le p^{e_1},$$
 hence $F_0(b_1,\ldots,b_{e_1})=F''_0$ (note that $C_K=F_0=F''_0\cap F'_0$). Now we have all the ingredients to state that 
 $B:=\lbrace b_1,\ldots,b_{e_1},\beta_1,\ldots,\beta_{e_2}\rbrace$ is a $p$-basis of $K$ over $C_K$. 
 Verification that $B$ is also a canonical $A$-basis is not hard and left to the reader.
\end{proof}
We note the obvious fact:

\begin{fact}\label{Gisomor}
 Let $G$ and $H$ be isomorphic algebraic groups over $k$. If $G$ is canonically integrable, then also $H$ is canonically integrable.
\end{fact}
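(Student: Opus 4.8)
The plan is to transport the whole theory along the isomorphism at the level of formal group laws, so that canonical $G$-bases become canonical $H$-bases. Since isomorphic algebraic groups share the same dimension, set $e:=\dim G=\dim H$. We may assume the isomorphism $\phi\colon G\to H$ carries identity to identity, whence it induces an isomorphism of the completed local rings at the identities, i.e.\ an invertible tuple of formal power series $\psi\in(k\llbracket\bar{X}\rrbracket)^e$ with $\psi(\bar{0})=\bar{0}$ and invertible linear part, realizing an isomorphism of formal group laws $\hat{G}\xrightarrow{\sim}\hat{H}$ (so $\psi(\hat{G}(\bar{X},\bar{Y}))=\hat{H}(\psi(\bar{X}),\psi(\bar{Y}))$). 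Through $\phi$, this same $\psi$ gives an isomorphism of function fields $k(G)\cong k(H)$ carrying the canonical derivation $\mathbb{D}^G$ to $\mathbb{D}^H$ (Example \ref{exDG}), that is, an isomorphism of $G[m]$- with $H[m]$-fields.

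Next I would set up the transport of structures on an arbitrary field. First one checks that $\psi$ descends to the $m$-truncated polynomial algebras: because $\psi$ has zero constant term, each $\psi_j(\bar{X})^{p^m}$ lies in $(X_1^{p^m},\ldots,X_e^{p^m})$, so the substitution $\bar{v}\mapsto\psi(\bar{v})$ is an automorphism of $k[\bar{v}]$. Using this, any $H[m]$-derivation $\mathbb{D}$ on a field $K$ (iterative for $\hat{H}[m]$) is carried by the coordinate change $\psi$ to a $G[m]$-derivation $\mathbb{D}^{\psi}$ on the same $K$ (iterative for $\hat{G}[m]$), and this operation is a bijection between $G[m]$- and $H[m]$-structures. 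The point to record is that the transport preserves the ring of constants: the first-order operators $D_{(0,\ldots,1,\ldots,0)}$ are merely recombined by the invertible linear part of $\psi$, so their common kernel $C_K$ is unchanged. In particular the hypothesis $[K:C_K]=p^{e}$ for the $H$-structure coincides with the hypothesis $[K:C_K]=p^{\dim G}$ for the twisted $G$-structure.

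With these identifications, the conclusion follows formally. Let $(K,\mathbb{D})$ be an $H[m]$-field with $[K:C_K]=p^{e}$; then $(K,\mathbb{D}^{\psi})$ is a $G[m]$-field with $[K:C_K]=p^{\dim G}$, so canonical integrability of $G$ supplies a canonical $G$-basis, witnessed by an embedding of $G[m]$-fields $\mu\colon(k(G),\mathbb{D}^G[m])\to(K,\mathbb{D}^{\psi}[m])$. Since the twist $(-)^{\psi}$ together with the isomorphism $k(H)\cong k(G)$ of the first paragraph identifies $H[m]$-morphisms out of $(k(H),\mathbb{D}^H[m])$ with $G[m]$-morphisms out of $(k(G),\mathbb{D}^G[m])$, the morphism $\mu$ corresponds to an embedding of $H[m]$-fields $\nu\colon(k(H),\mathbb{D}^H[m])\to(K,\mathbb{D}[m])$. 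The image under $\nu$ of the canonical parameters of $H$ is a set $B$ of $e$ elements; as $\nu$ is a field embedding sending constants into $C_K$ and differs from $\mu$ only by the coordinate change $\psi$ (whose coefficients lie in $k\subseteq C_K$ and whose linear part is invertible), $B$ inherits $p$-independence over $C_K$ from the canonical $G$-basis. Hence $B$ is a canonical $H$-basis. I expect the only genuine verification to be that the coordinate change $\psi$ intertwines the $\hat{H}[m]$- and $\hat{G}[m]$-iterativity diagrams; everything else is bookkeeping, which is why the statement is recorded as an obvious fact.
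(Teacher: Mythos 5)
Your proposal is correct; the paper itself offers no proof at all (the statement is introduced as ``the obvious fact'' with no proof environment), and your transport-of-structure argument is exactly the bookkeeping the author is implicitly invoking. The key verifications all check out: the substitution $\bar{v}\mapsto\psi(\bar{v})$ descends to $k[\bar{v}]$ because $h^{p^m}=\sum_j X_j^{p^m}g_j^{p^m}\in(X_1^{p^m},\ldots,X_e^{p^m})$ for $h=\sum_j X_jg_j$ in characteristic $p$; both sides of the twisted iterativity diagram compute to $\ev_{\hat{H}(\psi(\bar{v}),\psi(\bar{w}))}\circ\mathbb{D}$; and the invertible linear part of $\psi$ only recombines the first-order operators, so $C_K$ is preserved. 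The one point worth phrasing more carefully is the $p$-independence of the transported set $B$ over $C_K$: the cleanest route is to note that $\nu$ and $\mu$ have the same image in $K$, that $k(H)$ is algebraic over $k(\bar{y})$, and that $K/C_K$ is purely inseparable, which forces $C_K(B)=C_K(\mu(k(G)))=K$.
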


We can prove now the main theorem of this paper.

\begin{theorem}\label{thmfinal}
 Let $G$ be a commutative and connected linear algebraic group over an algebraically closed field $k$. If maximal unipotent subgroup of $G$
 has dimension at most 2, then $G$ is integrable.
\end{theorem}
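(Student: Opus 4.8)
The plan is to reduce the statement to the canonical integrability of the two factors appearing in the structure decomposition of $G$, and then to invoke Lemma \ref{product} together with the observation (recorded just after the definition of integrability) that canonical integrability implies integrability via Theorem \ref{integrable}. Since $k$ is algebraically closed, hence perfect, and $G$ is commutative and connected, the structure theory of commutative linear algebraic groups furnishes a direct-product decomposition $G\cong T\times U$, where $T$ is the maximal subtorus and $U=G_u$ is the subgroup of unipotent elements (these are subgroups with trivial intersection whose product is all of $G$, by the commutativity of $G$ and the multiplicativity of the Jordan decomposition). Over an algebraically closed field $T\cong\gm^r$ for some $r$, and by hypothesis $\dim U\le 2$.

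First I would dispose of the torus factor. By \cite{Tyc}, $\gm$ is canonically integrable, so iterating Lemma \ref{product} shows that $T\cong\gm^r$ is canonically integrable. Next I would treat the unipotent factor $U$ by cases on $\dim U$. If $\dim U=0$ there is nothing to prove, and if $\dim U=1$ then $U\cong\ga$, which is canonically integrable by Example \ref{ZieglerEx} with $e=1$ (or by \cite{Mats1}). The decisive case is $\dim U=2$. Here $U$ sits in a short exact sequence $0\to\ga\to U\to\ga\to 0$ of commutative algebraic groups, since a connected two-dimensional commutative unipotent group contains a copy of $\ga$ whose quotient is one-dimensional, connected and unipotent, hence again $\ga$.

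By Serre's classification of such extensions (\cite{serre1988algebraic}, p.\ 171), every symmetric $2$-cocycle is, up to coboundary, a $k$-linear combination $\sum_n\alpha_n H_n$ of the cocycles $H_n$ introduced in Subsection \ref{subnewex1}. Consequently $U$ is isomorphic to one of the groups whose group law $*$ is studied there, and the corollary concluding Subsection \ref{subnewex1} (existence of a canonical $G$-basis whenever $[K:C_K]=p^2$), combined with Fact \ref{Gisomor}, shows that $U$ is canonically integrable. Applying Lemma \ref{product} to $T\times U$ then yields that $G$ is canonically integrable, and therefore integrable by Theorem \ref{integrable}.

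The bulk of the difficulty is not in this final assembly but in the dimension-$2$ unipotent case, which has already been carried out in Subsection \ref{subnewex1}: the genuinely hard work was the explicit construction of the canonical basis elements $x$ and $y$ there. The remaining obstacle, properly speaking, is to verify that every two-dimensional commutative connected unipotent group really does arise — after an isomorphism — as one of the extensions classified by the symmetric cocycles $H_n$, and that the isomorphism of Fact \ref{Gisomor} transports the canonical $G$-basis data faithfully. Once that identification is in place, the theorem follows formally from the results established above.
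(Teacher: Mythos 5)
Your proposal is correct and follows essentially the same route as the paper: Jordan decomposition of $G$ into its unipotent and semisimple (torus) parts, Serre's classification of two-dimensional commutative unipotent groups via the symmetric cocycles $H_n$ to reduce to the group treated in Subsection \ref{subnewex1}, and then Lemma \ref{product}, Fact \ref{Gisomor} and Theorem \ref{integrable} to assemble the pieces. The only cosmetic difference is the reference used for the canonical integrability of $\gm$ (the paper cites \cite{HK1} rather than \cite{Tyc}), and the ``remaining obstacle'' you flag at the end is exactly what the paper's citation of \cite[Proposition 8, p.~171]{serre1988algebraic} is meant to settle.
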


\begin{proof}
 Due to ``Jordan decomposition'' (last theorem on page 70. in \cite{Water}), $G$ decomposes as $G_U\times G_S$, where $G_U$ consists of unipotent elements and $G_S$ of semi-simple elements.
 If the dimension of $G_U$ is equal to 2 we know by \cite[Proposition 8., page 171.]{serre1988algebraic} that $G_U$ is isomorphic to the group
 defined at the beginning of the previous subsection, so it is canonically integrable. If $\dim G_U=1$, then by \cite[Theorem 3.4.9.]{tspringer}
 it is isomorphic to $\mathbb{G}_a$, so canonically integrable by \cite[Proposition 4.5.]{HK1}. We focus now on the semi-simple part.
 By \cite[Lemma 2.4.2.ii)]{tspringer}, $G_S$ is diagonalizable, and by \cite[Corollary 3.2.7.ii)]{tspringer} it is a torus.
 Proposition 4.10 from
 \cite{HK1} states that also $\mathbb{G}_m$ is canonically integrable, so our group $G$ is isomorphic to the product of canonically integrable groups.
 Finally, we use Lemma \ref{product}, Fact \ref{Gisomor} and Theorem \ref{integrable}.
\end{proof}

In most cases of applications of the model theory to the differential algebra, we are dealing with an algebraic group $G$ over a field $k$, which is assumed only to be perfect. One may wonder if Theorem \ref{thmfinal} can be used for such $G[m]$-fields, i.e. for models of $G[m]-\dcf$ (\cite{HK}). The answer is positive, because separable closure of $k$, which is also algebraic closure, is contained in the absolute constants for models of $G[m]-\dcf$ (for an argument check e.g. proof of \cite[Theorem 10.]{Wood1}).

\subsection{Possible generalisations}
The desired generalisation is to drop, in the assumptions of Theorem \ref{thmfinal}, the condition for the dimension of the unipotent component of group $G$. Unfortunately, the ideas from the above proof do not work in the case of unipotent groups of dimension higher than $2$. There are several reasons for that, which will be explained below.

First of all, we are using in Subsection \ref{subnewex1} formulas for the group law of our group $G$. Commutative, connected unipotent groups of dimension $2$ are characterised by \cite[Proposition 8., page 171.]{serre1988algebraic}, so the explicit formulas for the group law are known. For the unipotent groups of dimension $3$ or greater, the best known to the author results coincide with \cite[Theorem 1., page 176.]{serre1988algebraic} and \cite[Theorem 2., page 177.]{serre1988algebraic}. It is unknown how the condition ``being a subgroup" translates to the case of iterative derivations, hence the last reference does not help in finding a canonical basis. However, there is a hope to use \cite[Theorem 1., page 176.]{serre1988algebraic}.
We sketch this idea and reveal difficulties in extending our technique to this context.

Assume that $G$ is isogenous to $W_n$ (the Witt group of dimension $n$). We should give a 
modification of Lemma 2.6 from \cite{HK1}, from which we would conclude that $G$ is integrable if and only if $W_n$ is integrable. If this can be done, then we need to check whether $W_n$ is integrable. Unluckily, the whole procedure from Subsection \ref{subnewex1} can not be extended to show the existence of a canonical basis for $W_n$. Even in the case $p=2$ and $n=3$, some issues appear.
If we translate the group law of $W_3$ (given by e.g. the formulas (a) and (b) in \cite[p. 128]{Witt}) for $p=2$ into the conditions for a canonical basis:
\begin{itemize}
 \item[i)] $[C_K(x,y,z):C_K]=2^3$,
 \item[ii)] $\sum\limits_{i,j,l=0}^{2^m-1}D_{(i,j,l)}(x)v_1^iv_2^jv_3^l=x+v_1-yv_2+yzv_3+zv_2v_3-zv_3^3-z^3v_3,$
 \item[iii)] $\sum\limits_{i,j,l=0}^{2^m-1}D_{(i,j,l)}(y)v_1^iv_2^jv_3^l=y+v_2-zv_3,$
 \item[iv)] $\sum\limits_{i,j,l=0}^{2^m-1}D_{(i,j,l)}(z)v_1^iv_2^jv_3^l=z+v_3,$
\end{itemize}
we can notice occurrence of equations of a new kind:
$$D_{(0,1,1)}(x)=z.$$
Proofs from Subsection \ref{subnewex1} involve only ``one dimensional differential equations" and the above equation is not of such a form. 
The ``one dimensional differential equations" appear, because
after diminishing the dimension by $1$, at the induction step, we deal with one-dimensional subgroup, what is the case for two-dimensional group $G$.

To summarize, generalisations of Theorem \ref{thmfinal} to the higher dimensional unipotent component case need to involve new proofs. It is also possible that such a generalisation can not be done without some additional assumptions, or even can not be done at all.

\bibliographystyle{plain}
\bibliography{moja}

\end{document}